\documentclass[reqno, 11pt,fleqn]{amsart} 
\usepackage{graphicx}
\usepackage{amsmath,amssymb,amsthm, mathrsfs, mathtools,bbm}
\usepackage[usenames,dvipsnames]{xcolor}

\usepackage{enumerate}
\usepackage{tabularx}
\usepackage{ltablex}
\usepackage{booktabs}

\usepackage{hyperref}
\hypersetup{
    colorlinks,
    linkcolor={red!60!black},
    citecolor={green!60!black},
    urlcolor={blue!60!black}
}
\usepackage[T1]{fontenc}
\usepackage{lmodern}
\usepackage[babel]{microtype}
\usepackage[english]{babel}
\usepackage{comment}

\usepackage{enumitem}
\usepackage{arydshln}


\linespread{1.2}
\usepackage{geometry}
\geometry{left=25mm,right=25mm, top=25mm, bottom=35mm}

\parindent 7pt
\graphicspath{{images/}} 

\usepackage{theoremref}


\theoremstyle{plain}
\newtheorem{thm}{Theorem}[section]
\newtheorem{theorem}[thm]{Theorem}
\newtheorem*{theorem*}{Theorem}
\newtheorem{prop}[thm]{Proposition}

\newtheorem{clm}[thm]{Claim}
\newtheorem{cor}[thm]{Corollary}

\newtheorem{lemma}[thm]{Lemma}

\theoremstyle{definition}
\newtheorem{definition}[thm]{Definition}
\newtheorem{rem}[thm]{Remark}
\newtheorem{problem}[thm]{Problem}

\numberwithin{equation}{section}



\DeclarePairedDelimiter{\parens}{(}{)}
\DeclarePairedDelimiter{\set}{\{}{\}}
\DeclarePairedDelimiter{\brackets}{[}{]}
\DeclarePairedDelimiter{\floor}{\lfloor}{\rfloor}
\DeclarePairedDelimiter{\ceil}{\lceil}{\rceil}
\DeclarePairedDelimiter\abs{\lvert}{\rvert}   
\DeclarePairedDelimiter\size{\lvert}{\rvert}   

\renewcommand{\leq}{\leqslant}
\renewcommand{\geq}{\geqslant}
\renewcommand{\Pr}{\mathbb{P}}

\renewcommand{\phi}{\varphi}
\newcommand{\eps}{\varepsilon} 

\newcommand{\cM}{\ensuremath{\mathcal{M}}} 
\newcommand{\cS}{\ensuremath{\mathcal{S}}} 
\newcommand{\cT}{\ensuremath{\mathcal{T}}} 

\newcommand{\nto}{\nrightarrow}

\newcommand{\CycleColors}{\ensuremath{\cS(C_\ell)}}
\newcommand{\CliqueColors}{\ensuremath{\cS(K_t)}}
\newcommand{\AllColors}{\ensuremath{\cS}}



\newcommand{\Bollobas}{Bollob{\'a}s}
\newcommand{\Boyadzhiyska}{Boyadzhiyska}
\newcommand{\Burr}{Burr}
\newcommand{\Chvatal}{Chv{\'a}tal}
\newcommand{\Clemens}{Clemens}

\newcommand{\Erdos}{Erd\H{o}s}
\newcommand{\Grinshpun}{Grinshpun}
\newcommand{\Gupta}{Gupta}
\newcommand{\Han}{H{\`a}n}

\newcommand{\Lovasz}{Lov{\'a}sz}

\newcommand{\Nesetril}{Ne\v{s}et\v{r}il}

\newcommand{\Rodl}{R\"{o}dl}
\newcommand{\Rogers}{Rogers}

\newcommand{\Szabo}{Szab\'o}
\newcommand{\Szekeres}{Szekeres}
\newcommand{\Szemeredi}{Szemer{\'e}di}

\newcommand{\Zumstein}{Zumstein}
\newcommand{\Zurcher}{Z\"urcher}

\newcommand{\FGLPS}{Fox, \Grinshpun{}, Liebenau, Person, and \Szabo{}}


\usepackage{tikz}
\usepackage{calc}
\usetikzlibrary{calc}

\tikzstyle{vertex}=[circle, draw, fill=black, inner sep=0pt, minimum size=6pt]
\tikzstyle{smallvertex}=[circle, draw, fill=black, inner sep=0pt, minimum size=4pt] 
\tikzstyle{tinyvertex}=[circle, draw, fill=black, inner sep=0pt, minimum size=1pt] 
\newcommand{\vertex}{\node[vertex]}
\newcommand{\smallvertex}{\node[smallvertex]}

\newcounter{Angle}
\newcounter{BaseLoop}

\newcommand{\widthedge}{1.5} 

\newcommand{\TikzExampleKrCl}[2]{
    \begin{tikzpicture}[x=#1 cm, y=#2 cm]
        \vertex (a1) at (-15:1) [label=0:]{};
        \smallvertex (b1) at (-5:1.1) [label=0:]{};
        \smallvertex (c1) at (5:1.1)  [label=0:]{};        
        \vertex (d1) at (15:1)  [label=0:]{};
        
        \vertex (a2) at (75:1) [label=0:]{};
        \smallvertex (b2) at (85:1.1) [label=0:]{};
        \smallvertex (c2) at (95:1.1)  [label=0:]{};        
        \vertex (d2) at (105:1)  [label=0:]{};
        
        \vertex (a3) at (165:1) [label=0:]{};
        \smallvertex (b3) at (175:1.1) [label=0:]{};
        \smallvertex (c3) at (185:1.1)  [label=0:]{};        
        \vertex (d3) at (195:1)  [label=0:]{};
        
        \vertex (a4) at (-105:1) [label=0:]{};
        \smallvertex (b4) at (-95:1.1) [label=0:]{};
        \smallvertex (c4) at (-85:1.1)  [label=0:]{};        
        \vertex (d4) at (-75:1)  [label=0:]{};

        \foreach \i in {1, 2, 3}{
            \setcounter{BaseLoop}{\i+1}
            \foreach \j in {\theBaseLoop,..., 4}{
            \path
            (a\i) edge[color=red, line width=\widthedge pt] (a\j) 
            (a\i) edge[color=red, line width=\widthedge pt] (d\j) 
            (d\i) edge[color=red, line width=\widthedge pt] (a\j)
            (d\i) edge[color=red, line width=\widthedge pt] (d\j); }}
        
        \foreach \i in {1, 2, 3, 4}{
        \path
        (a\i) edge[color=blue, dashed,line width=\widthedge pt] (b\i)
        (b\i) edge[color=blue, dashed,line width=\widthedge pt] (c\i)
        (c\i) edge[color=blue, dashed,line width=\widthedge pt] (d\i);}

        \draw[draw=black] (0.75,-0.6) rectangle ++(1.9,-0.4);
        \node (red1) at (0.8,-0.7) [label=0:]{};
        \node (red2) at (1.2,-0.7) [label=0:signal edges - red-determiners]{};
        \path  (red1) edge[color=red, line width=\widthedge pt](red2) ;
        \node (blue1) at (0.8,-0.9) [label=0:]{};
        \node (blue2) at (1.2,-0.9) [label=0:signal edges - blue-determiners]{};
        \path  (blue1) edge[color=blue, dashed, line width=\widthedge pt] (blue2) ;
        
        \vertex (v) at (45:1.2) [label=45:$v$,color=gray]{}; 
        \foreach \i in {1, 2, 3, 4}{
        \path 
        (v) edge[color=gray] (a\i)
        (v) edge[color=gray] (d\i); }
        
    \end{tikzpicture}

}

\newcommand{\TikzExampleCkCl}[2]{
\begin{tikzpicture}[x=#1 cm, y=#2 cm]
	\foreach \i in {1, 2, 3, 4, 5, 6} {
		\setcounter{Angle}{360 - \i * 360 / 6 + 90}
		\vertex (c\i) at (\theAngle:0.666) [label=\theAngle:]{};
	}
	\vertex (x) at (90:0.666) [label=90:$x$]{};	
	\vertex (y) at (210:0.666) [label=210:$y$]{};	
	\vertex (z) at (330:0.666) [label=330:$z$]{};	

	\path 
	(c1) edge[color=red, line width=\widthedge pt] (c2)
	(c2) edge[color=red, line width=\widthedge pt] (c3)
	(c3) edge[color=red, line width=\widthedge pt] (c4)
	(c4) edge[color=red, line width=\widthedge pt] (c5)
	(c5) edge[color=red, line width=\widthedge pt] (c6)
	(c6) edge[color=red, line width=\widthedge pt] (c1);
	
	\vertex (b) at (15:1) [label=20:]{};
	\vertex (a) at (45:1) [label=40:]{};
	\path 
	(c6) edge[color=blue, dashed, line width=\widthedge pt] (a)
	(a) edge[color=blue, dashed, line width=\widthedge pt](b)
	(b) edge[color=blue, dashed, line width=\widthedge pt] (c2);
	
	\vertex (c) at (-75:1) [label=20:]{};
	\vertex (d) at (-105:1) [label=40:]{};
	\path 
	(c2) edge[color=blue, dashed, line width=\widthedge pt] (c)
	(c) edge[color=blue, dashed, line width=\widthedge pt] (d)
	(d) edge[color=blue, dashed, line width=\widthedge pt] (c4);
	
	\vertex (e) at (165:1) [label=20:]{};
	\vertex (f) at (135:1) [label=40:]{};
	\path 
	(c4) edge[color=blue, dashed, line width=\widthedge pt] (e)
	(e) edge[color=blue, dashed, line width=\widthedge pt](f)
	(f) edge[color=blue, dashed, line width=\widthedge pt] (c6);
	
    \draw[draw=black] (0.6,-0.6) rectangle ++(1.9,-0.4);
    \node (red1) at (0.65,-0.7) [label=0:]{};
    \node (red2) at (1.05,-0.7) [label=0:signal edges - red-determiners]{};
    \path  (red1) edge[color=red, line width=\widthedge pt](red2) ;
    \node (blue1) at (0.65,-0.9) [label=0:]{};
    \node (blue2) at (1.05,-0.9)  [label=0:signal edges - blue-determiners]{};
    \path  (blue1) edge[color=blue, dashed, line width=\widthedge pt] (blue2) ;
    
   \vertex (v) at (0,0) [label=45:$v$,color=gray]{}; 
	\path 
	(v) edge[color=gray] (x)
	(v) edge[color=gray] (y)
	(v) edge[color=gray] (z);    
	
\end{tikzpicture}}


\title[Minimum degree of asymmetric Ramsey-minimal graphs]
{On the minimum degree of minimal Ramsey graphs for cliques versus cycles}

\author[A.Bishnoi]{Anurag Bishnoi}
\address[A1]{Delft University of Technology, Delft, Netherlands}
\email{a.bishnoi@tudelft.nl}

\author[S.Boyadzhiyska]{Simona Boyadzhiyska}
\address[A2]{Freie Universit\"at Berlin, Institut f\"ur Mathematik, Berlin, Germany}
\email{s.boyadzhiyska@fu-berlin.de}

\author[D.Clemens \and P.Gupta]{Dennis Clemens \and Pranshu Gupta}
\address[A3,A4]{Hamburg University of Technology, Institute of Mathematics, Hamburg, Germany}
\email{\{dennis.clemens,pranshu.gupta\}@tuhh.de}

\author[T.Lesgourgues \and A.Liebenau]{Thomas Lesgourgues \and Anita Liebenau} 
\address[A4,A5]{School of Mathematics and Statistics, UNSW Sydney, 2052 Kensington, NSW, Australia}
\email{\{t.lesgourgues,a.liebenau\}@unsw.edu.au}

\begin{document}
\maketitle 
\setlength{\mathindent}{2cm}


\begin{abstract}
A graph $G$ is said to be \emph{$q$-Ramsey} for a $q$-tuple of graphs $(H_1,\ldots,H_q)$, denoted by $G\to_q(H_1,\ldots,H_q)$, if every $q$-edge-coloring of $G$ contains a monochromatic copy of $H_i$ in color $i,$ for some $i\in[q]$. 
Let $s_q(H_1,\ldots,H_q)$ denote the smallest minimum degree of $G$ over all graphs $G$ that are minimal $q$-Ramsey for $(H_1,\ldots,H_q)$ (with respect to subgraph inclusion). 
The study of this parameter was initiated in 1976 by \Burr{}, \Erdos{} and \Lovasz{},  who determined its value precisely for a pair of cliques.
Over the past two decades the parameter $s_q$ has been studied by several groups of authors, the main focus being  on the symmetric case, where $H_i\cong H$ for all $i\in [q]$. The asymmetric case, in contrast, has received much less attention. In this paper, we make progress in this direction, studying asymmetric tuples consisting of cliques, cycles and trees. We determine $s_2(H_1,H_2)$ when $(H_1,H_2)$ is a pair of one clique and one tree, a pair of one clique and one cycle, and when it is a pair of two different cycles. We also generalize our results to multiple colors and obtain bounds on $s_q(C_\ell,\ldots,C_\ell,K_t,\ldots,K_t)$ in terms of the size of the cliques $t$, the number of cycles, and the number of cliques. Our bounds are tight up to logarithmic factors when two of the three parameters are fixed. 
\end{abstract}

\section{Introduction}\label{sec:introduction}

A graph $G$ is said to be $q$-\emph{Ramsey} for a $q$-tuple of graphs $(H_1,\ldots,H_q)$, denoted by \mbox{$G\to_q (H_1,\ldots,H_q)$,} if, for every $q$-coloring of the edges of $G$, there exists a monochromatic copy of $H_i$ in color $i$ for some $i\in[q]$. In the symmetric case, when $H_i\cong H$ for all $i\in [q]$, we simply say that the graph $G$ is $q$-\emph{Ramsey} for $H$. It follows from Ramsey's theorem~\cite{RamseyFormalLogic} that such a graph $G$ exists for any choice of $(H_1,\ldots,H_q)$. The most well-known object of study in this area is arguably the \emph{Ramsey number} of a $q$-tuple of graphs $(H_1,\ldots,H_q)$, denoted by $r_q(H_1,\ldots,H_q)$ and defined as the smallest number of vertices in any graph that is $q$-Ramsey for $(H_1,\ldots,H_q)$. Despite being studied intensively for many families of graphs, it has been determined for very few of them. The case where each $H_i$ is isomorphic to a complete graph $K_t$ is of particular interest. Early results by \Erdos{}~\cite{erdos1947} and \Erdos{} and \Szekeres{}~\cite{erdos_combinatorial_1935} establish that $2^{t/2}\leq r_2(K_t,K_t)\leq 4^t$. Despite being over seventy years old, these bounds have only been improved by subexponential factors: the best known lower bound is due to Spencer~\cite{spencer1975ramsey}, while the best known upper bound was established very recently by Sah~\cite{sah2020diagonal}, improving on a previous result due to Conlon~\cite{conlon_upper_bound}.

A natural generalization is to investigate other graph parameters. In their seminal paper~\cite{burr_graphs_1976}, \Burr{}, \Erdos{}, and \Lovasz{} initiated the study of minimum degrees of Ramsey graphs. Observe that, given any graph $G$ that is $q$-Ramsey for $H$, we can add an isolated vertex to $G$ to obtain another graph $G'$ that is also $q$-Ramsey for $H$, with minimum degree zero. To avoid such trivialities, we restrict our attention to graphs $G$ that are minimal in the following sense. A graph $G$ is said to be $q$-\emph{Ramsey-minimal} for $(H_1,\ldots,H_q)$ if $G$ is $q$-Ramsey for $(H_1,\ldots,H_q)$ but no proper subgraph of $G$ is. We denote the family of all $q$-Ramsey-minimal graphs for $(H_1,\ldots,H_q)$ by $\cM_q(H_1,\ldots,H_q)$. We are interested in studying the parameter $s_q(H_1,\ldots,H_q)$, defined as the smallest minimum degree among all $q$-Ramsey-minimal graphs for $(H_1,\ldots,H_q)$, that is, $s_q(H_1,\ldots,H_q) = \min \set*{\delta(G) : G\in\cM_q(H_1,\ldots,H_q)}$, where $\delta(G)$ denotes the minimum degree of $G$. In the symmetric case, when $H_i\cong H$ for all $i\in [q]$, we simply write $s_q(H)$ instead of $s_q(H,\ldots,H)$ (and similarly for $r_q(H)$ and $\cM_q(H)$). It is not difficult to show that
\begin{equation}
    \sum\limits_{i=1}^{q}(\delta(H_i)-1) < s_q(H_1,\ldots,H_q)\leq r_q(H_1,\ldots,H_q)-1. \label{eq:trivial_bounds_sq}
\end{equation}
The proof for the symmetric case and when $q=2$ can be found in Fox and Lin~\cite[Theorem 3]{fox_minimum_2007}, and the argument easily extends to the more general inequalities. 

\Burr{}, \Erdos{}, and \Lovasz{}~\cite{burr_graphs_1976} considered pairs of complete graphs and established that  $s_2(K_t,K_k)=(t-1)(k-1)$. We want to remark that, in the symmetric case, there is a large gap between $s_2(K_t)$ and the exponential upper bound in~\eqref{eq:trivial_bounds_sq}. This surprising phenomenon tells us that, while every graph that is $2$-Ramsey for $K_t$ must have at least exponentially many vertices, there is such a graph $G$ that contains a vertex of degree quadratic in $t,$ and this vertex is essential for the Ramsey property of $G.$  

Since the  seminal article of \Burr{}, \Erdos{}, and \Lovasz{}~\cite{burr_graphs_1976}, the parameter $s_2(H)$ has been studied  for various graphs $H$. For example, Fox and Lin~\cite{fox_minimum_2007} showed that the lower bound in~\eqref{eq:trivial_bounds_sq} is tight for complete bipartite graphs. \Szabo{}, \Zumstein{}, and \Zurcher{}~\cite{szabo_minimum_2010} extended this result to several other classes of bipartite graphs, including trees and even cycles, while \Grinshpun{}~\cite{grinshpun_problems_2015} proved it for $3$-connected bipartite graphs. Some non-bipartite cases were addressed as well, such as cliques with pendant edges~\cite{fox_what_2014}, cliques with the edge set of a star removed~\cite{grinshpun_minimum_2017}, and odd cycles~\cite{boyadzhiyska2020minimal}.

All these results address the symmetric case and, to the best of our knowledge, the result of \Burr{}, \Erdos{}, and \Lovasz{} concerning pairs of cliques is the only asymmetric case to date. It is then natural to consider pairs of graphs $(K_t,H),$ where $H$ is a very sparse graph such as a tree $T_{\ell}$ or a cycle $C_{\ell}$ (where $\ell$ is the number of vertices). These pairs have already been studied in Ramsey theory, in the context of Ramsey numbers. A classical result by \Chvatal{}~\cite{Chvtal1977TreecompleteGR} states that $r_2(K_t,T_\ell)=(t-1)(\ell-1)+1.$ In fact, any red/blue-coloring witnessing the inequality $r_2(K_t,T_\ell)>(t-1)(\ell-1)$ is so special that we can easily deduce the following. 
\begin{prop}\thlabel{thm:2_colors_completetree}
For all integers $t\geq3$ and $\ell\geq 2$, we have $s_2(K_t,T_\ell)=t-1.$
\end{prop}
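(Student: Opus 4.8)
The lower bound $s_2(K_t,T_\ell)\ge t-1$ is immediate from the left-hand inequality in~\eqref{eq:trivial_bounds_sq}, since $(\delta(K_t)-1)+(\delta(T_\ell)-1)=(t-2)+0=t-2$. When $\ell=2$ one checks directly that $K_t$ is Ramsey-minimal for $(K_t,K_2)$, which matches the lower bound; so from now on I would assume $\ell\ge3$. For the upper bound the plan is to exhibit a single graph $G\in\cM_2(K_t,T_\ell)$ having a vertex of degree $t-1$. I would take $G$ to be the disjoint union of a suitable ``gadget'' $H$ with a new vertex $v$, where $v$ is then joined to a prescribed $(t-1)$-set $A=\{a_1,\dots,a_{t-1}\}$ of vertices of $H$; note that $v$ necessarily ends up with $\deg_G(v)=t-1$.

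The gadget $H$ should have two properties: \emph{(i)} $H\not\to_2(K_t,T_\ell)$, i.e.\ $H$ has a \emph{good} colouring (no red $K_t$, no blue $T_\ell$); and \emph{(ii)} in \emph{every} good colouring of $H$ the set $A$ spans a red $K_{t-1}$ and each $a_i$ lies in a blue $K_{\ell-1}$. Granting such an $H$, I would verify $G\to_2(K_t,T_\ell)$ as follows: given any $2$-colouring of $G$, restrict it to $H$; if this already contains a red $K_t$ or a blue $T_\ell$ we are done, so assume it is a good colouring of $H$ and apply~(ii). If all $t-1$ edges at $v$ are red, then $\{v\}\cup A$ is a red $K_t$. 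If some edge $va_i$ is blue, then the blue $K_{\ell-1}$ through $a_i$ together with the pendant blue edge $va_i$ contains a copy of every $\ell$-vertex tree — embed a leaf of $T_\ell$ at $v$ and the remaining $\ell-1$ vertices inside the clique — in particular a blue $T_\ell$. Either way we obtain the required monochromatic copy.

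Next I would check that every edge at $v$ is essential, that is $G-va_i\not\to_2(K_t,T_\ell)$ for each $i$: take a good colouring of $H$ (which exists by~(i)) and colour the remaining $t-2$ edges at $v$ red. Then $v$ has no blue edge and only $t-2$ red neighbours, so it lies in no red $K_t$ and no blue $T_\ell$, and no monochromatic copy is created anywhere; hence $G-va_i$ is not $2$-Ramsey. Consequently, any Ramsey-minimal subgraph $G'$ of $G$ must retain all $t-1$ edges at $v$ (deleting one would leave a subgraph of the non-Ramsey graph $G-va_i$), so $v\in V(G')$ with $\deg_{G'}(v)=t-1$, and therefore $s_2(K_t,T_\ell)\le\delta(G')\le t-1$. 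Together with the lower bound this yields $s_2(K_t,T_\ell)=t-1$.

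The heart of the argument — and the step I expect to be the main obstacle — is the construction of a gadget $H$ satisfying (i) and (ii), and this is exactly where the rigidity of \Chvatal's extremal colourings enters. Iterating \Chvatal's argument together with \Turan's theorem, one shows that the only good colouring of $K_{(t-1)(\ell-1)}$ (up to relabelling) has red graph equal to the complete balanced $(t-1)$-partite graph on $(t-1)(\ell-1)$ vertices and blue graph equal to a disjoint union of $t-1$ cliques $K_{\ell-1}$; in such a colouring every vertex lies in a blue $K_{\ell-1}$ and every transversal of the $t-1$ red colour classes is a red $K_{t-1}$, which is precisely the structure demanded in~(ii). Building on $K_{(t-1)(\ell-1)}$ one attaches a bounded auxiliary structure whose role is to ``pin down'' the partition underlying any good colouring to a fixed equipartition $W_1\cup\dots\cup W_{t-1}$ with a chosen $a_i\in W_i$, so that (i) and (ii) hold for the resulting graph $H$. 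The delicate points here are to ensure that the pinning structure neither destroys the existence of a good colouring of $H$ nor leaves any freedom in the partition, and that all of its vertices still have degree at least $t-1$ (this last point needs a small extra adjustment when $\ell<t$); the uniqueness of the extremal colouring of $K_{(t-1)(\ell-1)}$ is the input that makes all of this possible.
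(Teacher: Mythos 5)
Your reduction (a vertex $v$ of degree $t-1$ attached to a gadget $H$, the verification that such a gadget yields $G\to_2(K_t,T_\ell)$, and the minimality check via $G-va_i\nto_2(K_t,T_\ell)$) is correct, as is the lower bound. But the proof is not complete: the construction of the gadget $H$ satisfying your property \emph{(ii)} is precisely the content of the upper bound, and you leave it as an unexecuted sketch. The specific difficulty you half-acknowledge is real and is not resolved by your outline: the extremal colouring of a single copy of $K_{(t-1)(\ell-1)}$ is unique only up to the \emph{choice} of the partition into blue $(\ell-1)$-cliques, so a fixed $(t-1)$-set $A$ may well have two vertices landing in the same blue clique, in which case $A$ does not span a red $K_{t-1}$. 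Your proposed fix --- ``attach a bounded auxiliary structure to pin down the partition'' --- is exactly the kind of determiner/sender machinery that is not available for the pair $(K_t,T_\ell)$ (trees are not $2$-connected, and the paper builds no gadgets for tuples involving trees), so as stated this step would not go through.

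The paper closes the gap by choosing $H$ so that the partition never needs to be pinned down: take $A$ to be the vertex set of a $K_{t-1}$ and glue to \emph{each} $a_i$ its own disjoint copy of $K_{(t-1)(\ell-1)}$ (identifying $a_i$ with one vertex of that copy). In any good colouring, each copy carries the unique \Chvatal{} colouring, so each $a_i$ lies in a blue $K_{\ell-1}$ \emph{inside its own copy} --- no matter which class of the partition it falls into --- and this already forces every edge of the $K_{t-1}$ on $A$ (and every edge from $v$ to $A$, in the Ramsey direction) to be red, since a blue edge at $a_i$ plus that blue $K_{\ell-1}$ contains every tree on $\ell$ vertices. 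This realises your properties \emph{(i)} and \emph{(ii)} with no auxiliary pinning structure. I recommend you replace the last paragraph of your argument with this construction; the rest of your write-up then goes through essentially verbatim.
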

The Ramsey number $r_2(K_t,C_{\ell})$ has received considerably more attention, as it shows different behaviour depending on the magnitude of~$\ell$; after decades of effort by researchers, the study of these Ramsey numbers has culminated in several very recent breakthroughs.  The case when $\ell=3$ defaults to the notoriously difficult case of the asymmetric Ramsey number $r_2(K_t,K_3)$ which is equal to $(4+o(1)) t^2/\log t$, as shown by Bohman and Keevash~\cite{bk2021},  Fiz Pontiveros, Griffiths, and Morris~\cite{fpgm2013}, and Shearer~\cite{shearer1983}, following the earlier results by Ajtai, Koml\'os, and Szemer\'edi~\cite{aks1980} and by Kim~\cite{kim1995}. At the other end of the spectrum, Keevash, Long, and Skokan~\cite{kls2021} showed that $r_2(K_t,C_{\ell})=(t-1)(\ell-1)+1$ for $\ell=\Omega(\log t/\log \log t),$ and that this bound on $\ell$ is best possible for the equality to hold. For a more detailed discussion on the history of $r_2(K_t,C_{\ell})$ we refer the reader to~\cite{kls2021}. We determine the value of $s_2(K_t, C_{\ell})$ precisely, showing that, unlike the Ramsey number, our parameter of interest is independent of $\ell$.  

We also complete the study $s_2$ for pairs of graphs each of which is a complete graph or a cycle by determining $s_2(C_k, C_\ell)$. The study of the Ramsey number in this case was completed already in the 1970s by Rosta~\cite{rosta1973ramsey} and Faudree and Schelp~\cite{faudree1974all}, and again depends on the values of $k$ and $\ell$. The minimum degree $s_2$, however, is again independent of either cycle length.

\begin{theorem}\thlabel{thm:2_colors_thm}
For all integers $t\geq 3$ and $k, \ell \geq 4$, 
\begin{enumerate}[label=(\roman*), ref=\ref{thm:2_colors_thm}~(\roman*)]
    \item \(s_2(C_k,C_{\ell})=3\).\thlabel{thm:2_colors_cyclecycle}
    \item \(s_2(K_t,C_{\ell})=2(t-1)\). \thlabel{thm:2_colors_completecycle}
\end{enumerate}
\end{theorem}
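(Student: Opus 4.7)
For part (i), the lower bound $s_2(C_k,C_\ell)\geq 3$ is immediate from \eqref{eq:trivial_bounds_sq}, since $\delta(C_k)=\delta(C_\ell)=2$ gives $s_2(C_k,C_\ell)>2$. For part (ii), the trivial bound yields only $s_2(K_t,C_\ell)\geq t$, and the real task is to improve this to $2(t-1)$. To this end, let $G\in\cM_2(K_t,C_\ell)$ and let $v$ be a vertex of minimum degree $d$. For each neighbor $u$ of $v$, minimality furnishes a $2$-coloring $\chi_u$ of $G-vu$ with neither red $K_t$ nor blue $C_\ell$; re-coloring $vu$ red must create a red $K_t$ through $vu$, forcing $v$ and $u$ to share at least $t-2$ common red neighbors spanning a red $K_{t-2}$, while re-coloring $vu$ blue forces a blue $v$--$u$ path of length $\ell-1$. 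My plan is to combine these two constraints, by an appropriate choice of $u$ (or by averaging over all of $N(v)$), to exhibit a single good coloring of some $G-vu$ in which $v$ has red-degree at least $t-1$ and blue-degree at least $t-1$, whence $d\geq 2(t-1)$.

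For both upper bounds I would use the \emph{signal-sender} technique introduced by \BEL{}. A \emph{determiner} for $(H_1,H_2)$ is a graph containing a distinguished edge $xy$ whose color is forced in every $2$-coloring that avoids red $H_1$ and blue $H_2$. Equipped with red- and blue-determiners, one builds a Ramsey-minimal graph by taking a small core together with a designated low-degree vertex $v$, and gluing determiner gadgets onto the surrounding edges. For (i), the introductory figure essentially depicts the construction: a $C_k$ whose edges act as red-determiners, three blue-path bridges of length $\ell-2$ linking alternate cycle vertices whose edges act as blue-determiners, and a new vertex $v$ of degree $3$ attached to three cycle vertices. For (ii), the analogous construction yields $v$ with $2(t-1)$ neighbors split into two clusters of size $t-1$, with red-determiner edges turning each cluster into the skeleton of a red $K_{t-1}$ together with $v$, and blue-determiner paths interconnecting the clusters, so that the constraints around $v$ precisely force the Ramsey property.

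I expect the main obstacles to be twofold. On the lower-bound side of (ii), boosting $d$ from $t$ to $2(t-1)$ requires extracting much more from the blue-$C_\ell$ constraint than the single blue neighbor it trivially yields; I anticipate needing to compare the colorings $\chi_u$ for several neighbors $u$ simultaneously, or to iterate the red-$K_{t-2}$ structure against potential blue paths. On the upper-bound side, verifying \emph{minimality} is the standard bottleneck for signal-sender constructions: for every edge $e$ of $G$, one must exhibit a good $2$-coloring of $G-e$, which entails a case analysis over whether $e$ lies inside a determiner gadget, on the core, or incident to $v$, combined with the robustness properties of the determiners.
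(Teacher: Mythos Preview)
Your lower-bound plan for (ii) is the main gap. Working with colorings $\chi_u$ of $G-vu$ and trying to aggregate them will not get you past $d\geq t$: each $\chi_u$ guarantees that $v$ has $t-2$ red neighbours (from the forced red $K_t$) but only \emph{one} blue neighbour (from the forced blue $C_\ell$), and there is no mechanism to make the blue count grow by comparing different $u$'s. The paper's argument is both simpler and structurally different: take a single $(K_t,C_\ell)$-free colouring $\phi$ of $G-v$ and try to extend it to the edges at $v$. The key lemma is purely about cliques: if a $K_t$-free graph on fewer than $2(t-1)$ vertices contains a copy of $K_{t-1}$, then all copies of $K_{t-1}$ share a common vertex. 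Applied to the red graph on $N(v)$, this yields a vertex $u$ lying in every red $K_{t-1}$; colouring $vu$ blue and every other edge at $v$ red then creates no red $K_t$ (every red $K_{t-1}$ in $N(v)$ is killed by the blue edge $vu$) and no blue $C_\ell$ (a single blue edge at $v$ cannot lie on a blue cycle through $v$). The cycle hypothesis is used only in this last trivial step; the real content is the clique-intersection lemma, which your outline does not have.

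On the upper bounds you are making your life harder than necessary. You do not need to verify Ramsey-minimality of the constructed graph $F$ edge by edge. It suffices to show $F\to_2(H_1,H_2)$ and $F-v\nto_2(H_1,H_2)$: then any minimal Ramsey subgraph of $F$ must contain $v$, and so has minimum degree at most $d_F(v)$. Your proposed construction for (ii), with two red $K_{t-1}$'s and blue $(\ell-2)$-paths between all cross-cluster pairs, is not the paper's and is in fact problematic: for $\ell=4$ two such paths between the same cross pair already form a blue $C_4$ in $F-v$. The paper instead takes $t-1$ classes of size $2$ (the complete $(t-1)$-partite graph $K_{2,2,\dots,2}$), forces all cross-class edges red with determiners, adds a single blue $(\ell-2)$-path inside each class, and attaches $v$ to all $2(t-1)$ vertices; then a good colouring of $F$ must give $v$ a red edge into each class (else two blue edges into one class close a blue $C_\ell$), yielding a red $K_t$, while in $F-v$ the blue paths are pairwise vertex-disjoint and hence acyclic. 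The construction for (i) is analogous: three hub vertices $x,y,z$, a red $(k-2)$-path and a blue $(\ell-2)$-path between each pair, and $v$ adjacent to $x,y,z$; it is not a single $C_k$ as you describe.
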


Next, we venture into the multicolor setting. 
\Boyadzhiyska{}, \Clemens{}, and \Gupta{}~\cite{boyadzhiyska2020minimal} showed that 
$s_q(C_\ell)= q+1$ for all $q\geq 2$ and $\ell\geq 4$. 
The only other case that has been studied deals with symmetric tuples of cliques, and no precise values are known for $s_q(K_t)$ for $q>2.$
\FGLPS{}~\cite{fox2016minimum} showed that $s_q(K_t)$ is quadratic in $q$, up to a polylogarithmic factor, when the size of the clique is fixed. The polylogarithmic factor was settled to be $\Theta(\log q)$ when $t=3$ by Guo and Warnke~\cite{Guo-Warnke20}, following earlier work in~\cite{fox2016minimum}. In the other regime, when the number of colors is fixed, \Han{}, \Rodl{}, and  \Szabo{}~\cite{Han:2018aa} showed that $s_q(K_t)$ is quadratic in the clique size $t$, up to a polylogarithmic factor. Bounds that are polynomial in both $q$ and $t$ are also known, see~\cite{fox2016minimum} and Bamberg, Bishnoi, and Lesgourgues~\cite{bamberg2020minimum}. 

In this paper, we investigate the parameter $s_q$ in the case of multiple cliques and multiple cycles. For given integers $q,q_1,q_2\geq 0$ with $q = q_1+q_2$, $t\geq 3$, and $\ell\geq 4$, we define $\cT = \cT(q_1,q_2,\ell,t)$ to be the $q$-tuple consisting of $q_1$ cycles on $\ell$ vertices and $q_2$ cliques on $t$ vertices, that is,
\begin{equation}\label{eq:definition_cT}
\cT(q_1,q_2,\ell,t)=(\underbrace{C_\ell,\dots, C_\ell}_{q_1 \text{ times}}, \underbrace{K_t,\dots, K_t}_{q_2 \text{ times}}),    
\end{equation}
and let $s_q(\cT(q_1,q_2,\ell,t))$ be the smallest minimum degree of a $q$-Ramsey-minimal graph for $\cT(q_1,q_2,\ell,t)$. When the parameters are clear from context, we will suppress them from the notation. Our main result in the multicolor setting is the following.

\begin{theorem}\thlabel{thm:sq_relation}
For all $\ell\geq 4$, $t\geq 3$, and all $q,q_1,q_2\geq 1$ such that $q_1+q_2 = q$, we have
    \begin{equation}\label{eq:sq_relation}
        s_{q_2}(K_t) + q_1 \leq s_q(\cT(q_1,q_2,\ell,t)) \leq s_q(K_t).
    \end{equation}
\end{theorem}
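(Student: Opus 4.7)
I would handle the two inequalities separately.

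\textbf{Upper bound $s_q(\cT)\leq s_q(K_t)$.} The plan is to take a $q$-Ramsey-minimal graph $G$ for $K_t$ with $\delta(G)=s_q(K_t)$ attained at some vertex $v_0$, and observe that $G$ is already $q$-Ramsey for $\cT$ whenever $\ell\leq t$, since any monochromatic $K_t$ then contains a monochromatic $C_\ell$. A $\cT$-Ramsey-minimal subgraph $G'\subseteq G$ still contains $v_0$ with degree at most $s_q(K_t)$, so $\delta(G')\leq s_q(K_t)$. When $\ell>t$ this reduction fails and a mild modification is required: I would attach at each edge of $G$ a short cycle-completing gadget so that any monochromatic $K_t$ in a cycle colour is accompanied by a monochromatic $C_\ell$ in that colour, without raising the degree at $v_0$.

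\textbf{Lower bound $s_{q_2}(K_t)+q_1\leq s_q(\cT)$.} I fix $G\in\cM_q(\cT)$ and a vertex $v$ attaining $d:=d(v)=\delta(G)$, and suppose for contradiction that $d\leq s_{q_2}(K_t)+q_1-1$. Since $G$ is Ramsey-minimal, removing any edge $e$ at $v$ destroys the Ramsey property, and the same is true after removing $v$ outright because $G-v\subseteq G-e$; hence $G-v$ admits a ``good'' $q$-colouring $\chi$ (no monochromatic $C_\ell$ in a cycle colour and no monochromatic $K_t$ in a clique colour). I then extend $\chi$ to $G$ by colouring the $d$ star edges at $v$: assign one edge the colour $i$ for each $i\in[q_1]$ (which blocks every monochromatic $C_\ell$ through $v$, since such a cycle would use two edges at $v$ in the same cycle colour), while the remaining $d-q_1<s_{q_2}(K_t)$ edges receive clique colours. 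Writing $T\subseteq N(v)$ for the set of $d-q_1$ clique-coloured neighbours, the task reduces to $q_2$-colouring the star $vT$ so that, for each clique colour $j$, the set $T_j:=\{w\in T:c(vw)=j\}$ induces no $K_{t-1}$ in colour $j$ under $\chi|_T$.

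The principal obstacle is showing that such a partition $T=T_{q_1+1}\cup\cdots\cup T_q$ always exists once $|T|<s_{q_2}(K_t)$. I argue by contrapositive: if no such partition existed, then every $q_2$-colouring of the star $vT$ (with the internal colouring pinned to $\chi|_T$) would create a monochromatic $K_t$ through $v$. To upgrade this local property to an honest Ramsey statement, I would enforce the colouring $\chi|_T$ on each edge of $G[T]$ via \Burr{}--\Erdos{}--\Lovasz{}-style \emph{signal senders}: for every edge $e'\in E(G[T])$ with $\chi(e')=c$, attach a gadget forcing the colour of $e'$ to be $c$ in any good $q_2$-colouring. The resulting graph $F^*$ is $q_2$-Ramsey for $K_t$, and $v$ has degree $d-q_1$ in $F^*$. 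In any $q_2$-Ramsey-minimal subgraph $F^\dagger\subseteq F^*$, the vertex $v$ cannot be isolated (otherwise deleting it from $F^\dagger$ would contradict minimality), so $\delta(F^\dagger)\leq d_{F^\dagger}(v)\leq d-q_1$, forcing $s_{q_2}(K_t)\leq d-q_1$ --- a contradiction. The delicate step is constructing signal senders for $K_t$ with $q_2$ colours and the correct colour-forcing property; for $q_2=1$ this step is avoided entirely, since then $|T|\leq t-2$ forbids a $K_{t-1}$ in $T$ outright.
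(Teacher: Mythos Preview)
Your lower bound argument is essentially correct, though more elaborate than necessary, and one detail is mis-stated. In the symmetric $q_2$-colour $K_t$ setting you cannot ``force the colour of $e'$ to be $c$''; you must instead use positive and negative signal senders to force edges of the same $\chi$-class to agree and edges of different $\chi$-classes to differ, so that in any $K_t$-free $q_2$-colouring of $F^*$ the pattern on $G[T]$ is a \emph{permutation} of $\chi$, and then close with a symmetry argument. With that fix your construction works. The paper, however, avoids all gadgets here: instead of fixing $\chi$ on $G-v$ and extending at the star, it recolours the whole graph. Let $G'$ consist of all clique-coloured edges of $G-v$ together with any $\min\{s_{q_2}(K_t)-1,d\}$ edges at $v$. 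Then $G'-v$ is not $q_2$-Ramsey for $K_t$ (witnessed by $\chi$) and $d_{G'}(v)<s_{q_2}(K_t)$, so $G'$ itself is not $q_2$-Ramsey for $K_t$ (any minimal Ramsey subgraph would contain $v$ at too small a degree). Likewise $G-G'$ is not $q_1$-Ramsey for $C_\ell$. Recolouring each part separately with the appropriate palette gives a $\cT$-free colouring of $G$. This uses only the definition of $s_{q_2}(K_t)$ and $s_{q_1}(C_\ell)$ --- no signal senders.

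Your upper bound has a genuine gap. Even when $\ell\le t$, there is no reason a $\cT$-Ramsey-minimal subgraph $G'\subseteq G$ must contain $v_0$: for that you would need $G-v_0\not\to_q\cT$, but the minimality of $G$ for $K_t$ only supplies a $K_t$-free $q$-colouring of $G-v_0$, which may well contain monochromatic copies of $C_\ell$ in cycle colours. So $G'$ could sit entirely inside $G-v_0$ with arbitrarily large minimum degree. The paper takes a completely different route: it proves $s_q(\cT)=P_{q_1,q_2}(t-1)$ for a suitable packing parameter and then observes the easy inequality $P_{q_1,q_2}(t-1)\le P_{0,q}(t-1)=s_q(K_t)$. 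The equality $s_q(\cT)=P_{q_1,q_2}(t-1)$ is where the real work lies: it requires building safe $\cS(C_\ell)$- and $\cS(K_t)$-determiners (and set-senders when $q_1>1$ or $q_2>1$) for the asymmetric tuple $\cT$, which in turn rests on constructing a $q$-Ramsey graph for $\cT$ with controlled local structure via a random-hypergraph argument. Your ``mild modification'' for $\ell>t$ gestures at part of this, but the details are substantial and are precisely the technical core of the paper.
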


Note that these upper and lower bounds are independent from the cycles' length $\ell.$ In fact, we prove a stronger statement in  \thref{lem:packing_equivalence} from which it follows that  $s_q(\cT)$ itself does not depend on~$\ell$. Using the known bounds for $s_{q}(K_t)$, we can deduce the following corollary. 

\begin{cor}\thlabel{cor:bounds}\hfill
\begin{enumerate}[label=(\roman*)]
    \item For all $t\geq 4$ and $q_1\geq 1$, there exist constants $c,C>0$ such that, for all $\ell\geq 4$ and $q_2\geq 1$, we have
    \[ c\, q_2^2\frac{\log q_2}{\log\log q_2} \leq s_{q_1+q_2}(\cT(q_1,q_2,\ell,t)) \leq Cq_2^2 (\log q_2)^{8(t-1)^2}.\] \label{cor:bounds:q2large}
    
    \item For all $q_1\geq 1$ there exist constants $c,C>0$ such that, for all $\ell\geq 4$ and $q_2\geq 1$, we have
    \[ c\,q_2^2\log q_2 \leq s_{q_1+q_2}(\cT(q_1,q_2,\ell, 3))\leq Cq_2^2\log q_2.\]\label{cor:bounds:triangle}
    
    \item For all $q_1,q_2\geq 1$, there exists a constant $C>0$ such that, for all $\ell\geq 4$ and $t\geq 3$, we have
   \[(t-1)^2 \leq s_{q_1+q_2}(\cT(q_1,q_2,\ell,t)) \leq Ct^2 \log^2 t.\]\label{cor:bounds:tlarge}
\end{enumerate}
\end{cor}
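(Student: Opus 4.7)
The plan is to derive the corollary by direct substitution into the sandwich provided by \thref{thm:sq_relation},
$$s_{q_2}(K_t)+q_1 \,\leq\, s_{q_1+q_2}(\cT(q_1,q_2,\ell,t)) \,\leq\, s_{q_1+q_2}(K_t),$$
using the best available bounds on $s_q(K_t)$ in each of the three regimes covered by the statement.

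For part (i), with $t \geq 4$ and $q_1$ fixed, I would invoke the bounds of \FGLPS{}~\cite{fox2016minimum}: for the left-hand endpoint, the lower bound $s_{q_2}(K_t) \geq c_t q_2^2 \log q_2/\log\log q_2$, and for the right-hand endpoint, the upper bound $s_{q_1+q_2}(K_t) \leq C_t (q_1+q_2)^2 (\log(q_1+q_2))^{8(t-1)^2}$. Because $q_1$ is a constant, $q_1+q_2 = \Theta(q_2)$, so the upper bound simplifies to the stated $C q_2^2 (\log q_2)^{8(t-1)^2}$. Part (ii) follows the same template with $t=3$, but substituting in the tighter identity $s_q(K_3) = \Theta(q^2\log q)$ of Guo and Warnke~\cite{Guo-Warnke20} at both endpoints to obtain matching upper and lower bounds of order $q_2^2 \log q_2$.

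For part (iii), where $q=q_1+q_2$ is fixed and $t$ varies, the upper bound $s_q(K_t) \leq C_q t^2\log^2 t$ is supplied by \Han{}, \Rodl{}, and \Szabo{}~\cite{Han:2018aa}. For the lower bound $(t-1)^2$, the natural source is the classical identity $s_2(K_t,K_t)=(t-1)^2$ of Burr, Erd\H os, and Lov\'asz~\cite{burr_graphs_1976}: whenever $q_2 \geq 2$, a standard BEL-type minimum-degree argument (which only requires distinguishing two color classes in a $q_2$-Ramsey-minimal graph) yields $s_{q_2}(K_t) \geq (t-1)^2$, and the sandwich then finishes the job.

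The whole derivation is a bookkeeping exercise once \thref{thm:sq_relation} is in hand, so the only real obstacle I foresee is the edge case $q_2=1$ in part (iii), where the sandwich's lower endpoint degrades to $s_1(K_t)+q_1 = t-1+q_1$, linear rather than quadratic in $t$. Recovering the stated $(t-1)^2$ bound in this case will require a small separate argument, plausibly by appealing to the finer \thref{lem:packing_equivalence} alluded to just after \thref{thm:sq_relation}, or by a dedicated BEL-style analysis applied directly to a Ramsey-minimal graph for $\cT(q_1,1,\ell,t)$.
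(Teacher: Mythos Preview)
Your approach is exactly what the paper indicates: the corollary is not given a standalone proof but is stated to follow by feeding the known estimates on $s_q(K_t)$ from~\cite{fox2016minimum},~\cite{Guo-Warnke20}, and~\cite{Han:2018aa} into the two sides of~\thref{thm:sq_relation}, just as you describe.

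You are right to flag the edge case $q_2=1$ in part~(iii), but it is not a gap to be patched by an auxiliary argument --- the asserted lower bound is simply false there. For $q_1=q_2=1$,~\thref{thm:2_colors_completecycle} gives $s_2(\cT(1,1,\ell,t))=s_2(K_t,C_\ell)=2(t-1)$, which is strictly smaller than $(t-1)^2$ for every $t\geq 4$. More generally, for any fixed $q_1\geq 1$ and $q_2=1$, the packing characterization of~\thref{lem:packing_equivalence} yields $s_{q_1+1}(\cT(q_1,1,\ell,t))=P_{q_1,1}(t-1)$, and the complete $(t-1)$-partite graph with all parts of size $q_1+1$, taken as the single graph in the color pattern, witnesses $P_{q_1,1}(t-1)\leq (q_1+1)(t-1)$, linear in $t$. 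So part~(iii) should be read with the tacit hypothesis $q_2\geq 2$. Under that assumption your BEL-style lower bound goes through cleanly via the monotonicity $s_{q_2}(K_t)=P_{0,q_2}(t-1)\geq P_{0,2}(t-1)=(t-1)^2$, which one obtains by restricting any valid $q_2$-color-pattern to its first two graphs and testing only two-valued vertex-colorings.
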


Thus, \thref{thm:sq_relation} is sufficient to determine $s_q(\cT(q_1,q_2,\ell,t))$ in terms of $q_2$ and in terms of $t$ when the other parameters are fixed. Similarly, the bounds in~\cite{bamberg2020minimum,fox2016minimum} yield bounds on $s_{q_1+q_2}(\cT(q_1,q_2,\ell,t))$ that are polynomial in both $t$ and $q$.

When $q_1$ is large compared to the other parameters, then 
the lower bound of \eqref{eq:sq_relation} is linear in $q_1$ while the upper bound is essentially quadratic in $q_1$. In this case, using the already mentioned stronger statement of~\thref{lem:packing_equivalence}, we prove the following asymptotically optimal result.

\begin{theorem}\thlabel{thm:q1large}
For all $\ell\geq 4$, $t\geq 3$, $q_2\geq 1$, and $\varepsilon>0$, there exists $q_0$ such that for all $q_1\geq q_0,$ we have 
\[s_{q_1+q_2}(\cT(q_1,q_2,\ell, t)) \leq (1+\varepsilon)q_1.\]
\end{theorem}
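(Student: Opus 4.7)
The plan is to construct, for each sufficiently large $q_1$, an explicit minimal $q$-Ramsey graph for $\cT$ that contains a vertex of degree at most $s_{q_2}(K_t)+q_1$. Since $s_{q_2}(K_t)$ is a constant depending only on $q_2$ and $t$, choosing $q_0 \geq s_{q_2}(K_t)/\varepsilon$ gives $s_{q_2}(K_t)+q_1 \leq (1+\varepsilon)q_1$ whenever $q_1 \geq q_0$. Observe that this essentially matches the lower bound $s_{q_2}(K_t)+q_1$ from \thref{thm:sq_relation}, so the construction is forced to be asymptotically optimal.

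The construction proceeds in two parts, tied together via the stronger form of \thref{lem:packing_equivalence}. For the clique part, I would start with a graph $H \in \cM_{q_2}(K_t)$ achieving $\delta(H)=s_{q_2}(K_t)$ at some vertex $u$. For the cycle part, I would attach at $u$ a family of $q_1$ disjoint \emph{cycle determiners}, one per cycle color $i \in [q_1]$: each is a small gadget of size depending only on $\ell$, contributing exactly one edge at $u$, and such that in any valid coloring of the ambient graph which avoids a monochromatic $C_\ell$ in color $i$, this distinguished $u$-incident edge is forced to take color $i$. Gadgets of this kind are precisely the signal-edge structures illustrated in the figures for \thref{thm:2_colors_thm}, and their existence for every $\ell \geq 4$ is exactly what makes $s_q(\cT)$ independent of~$\ell$.

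With this setup, $u$ has degree $s_{q_2}(K_t)+q_1$ in the resulting graph $G$. To establish the Ramsey property: any coloring of $G$ avoiding the forbidden monochromatic copies must, by the determiner property, use each cycle color $i$ at the corresponding $u$-incident edge, after which the edges of $H$ are forced to receive only clique colors, producing a monochromatic $K_t$ by Ramsey-minimality of $H$. For minimality, one passes to a Ramsey-minimal subgraph, which must still contain $u$ together with all $q_1+s_{q_2}(K_t)$ of its incident edges, because each individual determiner and the underlying $H$ are themselves minimal and carry no redundant edges around $u$. The principal obstacle is guaranteeing that the gadgets interact cleanly: the cycle determiners must not accidentally contribute to a monochromatic $K_t$ in a clique color, and the copy of $H$ must not permit cycle colors on its edges to produce ``shortcut'' copies of $C_\ell$ that would let the adversary bypass the determiners. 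Isolating these compatibility conditions is precisely the role of the strengthened form of \thref{lem:packing_equivalence}, and once that lemma is in hand the bound $(1+\varepsilon)q_1$ falls out by taking $q_0$ large compared to $s_{q_2}(K_t)/\varepsilon$.
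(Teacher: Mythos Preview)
Your proposal has a repairable issue and a genuine gap. The repairable one: a $\{i\}$-determiner for a single cycle-color $i\in\cS(C_\ell)$ cannot exist when $q_1>1$, since $\cT$ is invariant under permutations of $\cS(C_\ell)$ and no gadget can single out one cycle-color (the paper notes this explicitly after \thref{def:safe_set_determiner}). One could instead use negative $\cS(C_\ell)$-senders from \thref{lem:cycle_and_clique_senders} to force the $q_1$ gadget-edges at $u$ to receive pairwise \emph{distinct} cycle-colors, which would serve the same purpose.

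The genuine gap is the assertion that ``the edges of $H$ are forced to receive only clique colors''. Your gadgets constrain only the $q_1$ designated edges at $u$, not the edges of $H$ itself; the adversary may freely place cycle-colors on $E(H)$ to destroy every monochromatic $K_t$ in a clique-color, and since $H$ is a fixed graph while $q_1$ is large, a $C_\ell$-free $q_1$-coloring of $H$ certainly exists, so no monochromatic $C_\ell$ is produced either. Attaching $\cS(K_t)$-determiners to the edges of $H$ would repair this but blows up the degree of $u$ uncontrollably. Translated through \thref{lem:packing_equivalence}, your construction amounts to asserting $P_{q_1,q_2}(t-1)\le P_{0,q_2}(t-1)+q_1$ by padding an optimal $P_{0,q_2}$ color pattern with $q_1$ dummy vertices---but the adversary may assign the cycle-colors to vertices of the original pattern rather than to the dummies, and property~\ref{item:Packing_P2} for $P_{0,q_2}(t-1)$ guarantees nothing once vertices are deleted. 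The paper's proof addresses exactly this robustness problem: it imports a color pattern from~\cite{fox2016minimum} with the much stronger property that \emph{every} subset of a fixed proportion of the vertices contains a $K_{t-1}$ in each color, so that after any $q_1$ vertices receive cycle-colors the remaining clique-color classes are still large enough. That density requirement is what forces $n\approx(1+\eps)q_1$ rather than the exact value $s_{q_2}(K_t)+q_1$ you are aiming for.
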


\paragraph{\textbf{Organization of the paper.}}
In Section~\ref{sec:preliminaries}, we introduce some of the key definitions and known results that will be necessary in the rest of the paper, and state our main technical results, \thref{lem:cycle_and_clique_determiners,lem:cycle_and_clique_senders}. Section~\ref{sec:2colors} is dedicated to the proofs of the $2$-color cases (\thref{thm:2_colors_completetree,thm:2_colors_thm}). In Section~\ref{sec:proof_main_result} we prove~\thref{thm:sq_relation,thm:q1large}, assuming the existence of certain gadget graphs as guaranteed by~\thref{lem:cycle_and_clique_determiners,lem:cycle_and_clique_senders}. Finally, Section~\ref{sec:existence_set_signal} contains the proof of~\thref{lem:cycle_and_clique_determiners,lem:cycle_and_clique_senders}.

\section{Preliminaries}\label{sec:preliminaries}

In this section, we introduce notation and key ideas that will be used throughout the article, and state our main technical results, the existence of gadget graphs for a $q$-tuple of cycles and cliques (\thref{lem:cycle_and_clique_determiners,lem:cycle_and_clique_senders}).\smallskip

We use standard graph theoretic notation throughout the article. Given a hypergraph $G$, we write $v(G)$ for the size of its vertex set and $e(G)$ for the size of its edge set. We often identify a graph with its edge set. In particular, for two graphs $G$ and $H$, we use $G-H$ to denote the graph on $V(G)$ with edge set $E(G)\setminus E(H)$. We say that a graph is \emph{$H$-free} if it does not contain $H$ as a (not necessarily induced) subgraph. The \emph{distance} between two sets of vertices $A$ and $B$ in a graph is the length of a shortest path with one endpoint in $A$ and one endpoint in $B$. 

Unless otherwise specified, we use the term coloring to refer to an edge-coloring. If a coloring of a graph uses at most $q$ colors, then we say that it is a $q$-coloring; unless otherwise specified, the color palette in a $q$-coloring is taken to be the set $[q]=\set*{1,\ldots,q}$. When $q=2$, we call the first color red and the second color blue. Given a $q$-coloring $\phi$ of a graph $G$ and a subgraph $F\subseteq G$, we will write $\phi_{|F}$ for the $q$-coloring induced by $\phi$ on the edges of $F$. 
Given a $q$-tuple of graphs $(H_1,\ldots, H_q)$, we say that a $q$-coloring $\phi$ of a graph $G$ is \emph{$(H_1,\ldots, H_q)$-free} if, for all $i\in [q]$, the graph $\phi^{-1}(\set{i})$ is $H_i$-free. When $H_i\cong H$ for all $i\in [q]$, we will simply say that $\phi$ is \emph{$H$-free} when $\phi$ is $(H,\dots, H)$-free.

Given colorings $\phi_1$ and $\phi_2$ of $G_1$ and $G_2$, respectively, such that $\phi_1(e) = \phi_2(e)$ for all $e\in E(G_1)\cap E(G_2)$, we define the coloring $\phi_1\cup\phi_2$ on $G_1\cup G_2$ by setting 
\[\phi(e) = 
\begin{cases}
    \phi_1(e) &\text{ if } e\in E(G_1),\\
    \phi_2(e) &\text{ if } e\in E(G_2).
\end{cases}\]

Let  $t\geq 3$, $\ell\geq 4$ and $q, q_1, q_2\geq 0$ be integers such that $q=q_1+q_2$. Recall that $\cT=\cT(q_1,q_2,\ell, t)$ denotes the $q$-tuple of cycles and cliques as defined in \eqref{eq:definition_cT}. For convenience, we will sometimes write \CycleColors{} for the color palette $\{1,\ldots,q_1\}$ and refer to it as the \emph{cycle-colors}; similarly, \CliqueColors{} will denote the color palette $\{q_1+1,\ldots,q\}$, referred to as the \emph{clique-colors}.

\subsection{Signal senders and determiners}\label{subsec:definition_gadgets}

For our constructions, we need gadget graphs similar to those introduced by \Burr{}, \Erdos, and \Lovasz~\cite{burr_graphs_1976} and \Burr{}, Faudree, and Schelp~\cite{burr1977ramseyminimal}. Let $q\geq 2$ and $(H_1,\dots, H_q)$ be a $q$-tuple of graphs. We begin with the simpler of the two gadget graphs.

\begin{definition}[Set-determiner]\thlabel{def:safe_set_determiner}
Let $X\subseteq [q]$ be any subset of colors. An \emph{$X$-determiner} for $(H_1,\dots, H_q)$  is a graph $D$ with a distinguished edge $d$ satisfying the following properties:  
\begin{enumerate}[label=(D\arabic*)]
    \item $D\nrightarrow_q (H_1,\dots, H_q)$.\label{axiom:set_determiner_not_Ramsey}
    \item For any $(H_1,\dots, H_q)$-free coloring $\phi$ of $D$, we have $\phi(d)\in X$.\label{axiom:set_determiner_colored_edge}
    \item For any color $c\in X$, there exist an $(H_1,\dots, H_q)$-free coloring $\phi$ of $D$ such that $\phi(d)=c$.\label{axiom:set_determiner_any_color}
\end{enumerate}
The edge $d$ is referred to as the \emph{signal edge} of $D$. 
\end{definition}

In the special case where $X=\set{c}$, these gadgets were defined by \Burr{}, Faudree, and Schelp in~\cite{burr1977ramseyminimal}, and are simply called \emph{determiners}. It is not difficult to see that a $\set{c}$-determiner can only exist for a $q$-tuple $(H_1,\dots, H_q)$ if $H_c \not\cong H_i$ for all $i\in[q]\setminus\set{c}$. Determiners are known to exist for all pairs $(G,H)$ such that $G\not\cong H$ and $G$ and $H$ are $3$-connected (see \Burr{}, \Nesetril{}, and \Rodl{}~\cite{burr1985useofsenders}). More recently, they were shown to exist for pairs of the form $(C_\ell, H)$ by Siggers~\cite{siggers_non-bipartite_2014}, where $H$ is a $2$-connected graph satisfying some additional properties.\smallskip

While set-determiners allow us to pick which set the color of a certain edge should come from, in order to have control over the specific color pattern we see on a group of edges (e.g., which edges should have the same color), we also define the following more sophisticated gadgets.

\begin{definition}[Set-sender]\thlabel{def:safe_set_sender}
Let $X\subseteq [q]$ be any subset of colors. A \emph{negative} (respectively \emph{positive}) \emph{$X$-sender} for $(H_1,\dots, H_q)$ is a graph $S$ with distinguished edges $e$ and $f$, satisfying the following properties:
\begin{enumerate}[label=(S\arabic*)]
    \item $S\nrightarrow_q (H_1,\dots, H_q)$.\label{axiom:set_sender_not_Ramsey}
    \item For any $(H_1,\dots, H_q)$-free coloring $\phi$ of $S$, there exist colors $c_1,c_2\in X$ with $c_1\neq c_2$ (respectively $c_1= c_2$) such that $\phi(e)=c_1$ and $\phi(f)=c_2$. \label{axiom:set_sender_colored_edges} 
    \item For any colors $c_1,c_2\in X$ with $c_1\neq c_2$ (respectively $c_1= c_2$), there exists an $(H_1,\dots, H_q)$-free coloring $\phi$ of $S$ with $\phi(e)=c_1$ and $\phi(f)=c_2$.\label{axiom:set_sender_any_color}
\end{enumerate}
The edges $e$ and $f$ are referred to as the \emph{signal edges} of $S$.
\end{definition}

In the special case where $X=[q]$, these gadgets were introduced by \Burr{}, \Erdos{}, and \Lovasz{}~\cite{burr_graphs_1976} and are called \emph{signal senders}. In~\cite{burr_graphs_1976} and~\cite{burr1977ramseyminimal}, it was shown that positive and negative signal senders exist for pairs of complete graphs. Subsequently, it was proved that they exist for other graphs as well as for more colors; in particular, \Rodl{} and Siggers~\cite{rodl_ramsey_2008} and Siggers~\cite{siggers_highly_2008} established their existence for any number of colors when $H_i\cong H$ for all $i\in[q]$  and $H$ is either $3$-connected or a cycle. In a later paper, Siggers~\cite{siggers_non-bipartite_2014} showed the existence of signal senders for some pairs of the form $(C_\ell, H)$.

In the symmetric case, when $H_i\cong H$ for all $i\in[q]$, we write set-senders for $H$ to denote set-senders for $(H,\dots, H)$, and similarly for signal senders. Additionally when $q=2$, we simplify the notation and write \emph{red-determiners} (respectively \emph{blue-determiners}) for \{red\}-determiners (respectively \{blue\}-determiners).\medskip

Intuitively speaking, the utility of set-senders and set-determiners comes from the fact that these gadgets allow us to force specific color patterns on particular sets of edges. In our constructions, we usually start with a base graph $G$ and add set-senders and set-determiners so that, in any $(H_1,\dots,H_q)$-free coloring of the resulting graph, we obtain a particular color pattern on the edges of $G$. More precisely, we will say that we \emph{attach} a set-determiner $D$ to an edge $e$ of $G$ to mean that we create a new copy $\widehat{D}$ of $D$ such that $e$ is the signal edge of $\widehat{D}$, and $\widehat{D}$ is otherwise vertex-disjoint from $G$. Similarly, we will say that we \emph{connect} or \emph{join} two edges $e_1$ and $e_2$ of $G$ by a set-sender $S$ to mean that we create a new copy $\widehat{S}$ of $S$ such that $e_1$ and $e_2$ are the signal edges of $\widehat{S}$ (in an arbitrary fashion), and $\widehat{S}$ is otherwise vertex-disjoint from $G$.

In order for these constructions to be useful, we need to be able to control the new copies of $H_1,\dots, H_q$ that might be created in the process. In particular, since we usually use set-senders and set-determiners as black boxes, we would like to be able to obtain an $(H_1,\dots, H_q)$-free coloring of the entire graph by simply giving each of the building blocks an $(H_1,\dots, H_q)$-free coloring. This motivates the definition of a safe coloring, given by Siggers in~\cite{siggers_non-bipartite_2014}. 

\begin{definition}[Safe coloring]\thlabel{def:safe_coloring_2colors}
Let $F$ be a graph, $A\subseteq E(F)$, and $\phi$ be an $(H_1,\dots, H_q)$-free $q$-coloring of $F$.
We say that \emph{$\phi$ is safe at $A$} if, for any graph $G$ with $V(F)\cap V(G) \subseteq V(A)$, a $q$-coloring $\psi$ of $F\cup G$ with $\psi_{\mid F} = \phi$ is $(H_1,\dots, H_q)$-free if and only if $\psi_{\mid G}$ is $(H_1,\dots, H_q)$-free.
\end{definition}

We will call a set-sender (respectively set-determiner) \emph{safe} if the coloring guaranteed by property \ref{axiom:set_sender_any_color} (respectively \ref{axiom:set_determiner_any_color}) can be chosen to be safe at the signal edge(s).
\medskip

As explained above, in the asymmetric setting, the work of~\cite{burr_graphs_1976},~\cite{burr1977ramseyminimal}, and~\cite{burr1985useofsenders} established the existence of signal senders and determiners for pairs of the form $(H_1,H_2)$, where $H_1$ and $H_2$ are either $3$-connected or isomorphic to $K_3$. These determiners can be shown to be safe following an argument similar to \thref{rem:safeness}. The only other result in this direction that we are aware of is due to Siggers~\cite{siggers_non-bipartite_2014}, who used the ideas of \Bollobas{}, Donadelli, Kohayakawa, and Schelp~\cite{bollobas2001ramsey} to prove the existence of safe signal senders and safe determiners for many pairs of the form $(H, C_\ell)$, where $H$ is a $2$-connected graph satisfying certain technical properties. The special cases that are relevant to our $2$-color study in Section \ref{sec:2colors} are given in the following lemma. While~\thref{lem:siggers_safe_det_clique_cycle} also follows from our more general~\thref{lem:cycle_and_clique_determiners},~we briefly sketch Siggers' proof for both cases below, combining a few arguments from his paper.

\begin{lemma}[\cite{siggers_non-bipartite_2014}]
\hfill \thlabel{lem:siggers_safe_det}
\begin{enumerate}[label=(\roman*), ref=\ref{lem:siggers_safe_det}~(\roman*)]
    \item Let $k,\ell\geq 4$ be integers with $k<\ell$. Then there exist safe red-determiners and safe blue-determiners for $(C_k,C_\ell)$.\thlabel{lem:siggers_safe_det_cycles}
    \item Let $\ell \geq 4$ and $t\geq 3$. Then there exist safe red-determiners and safe blue-determiners for $(K_t,C_\ell)$.\thlabel{lem:siggers_safe_det_clique_cycle}
\end{enumerate}
\end{lemma}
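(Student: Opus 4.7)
The plan is to follow Siggers' strategy: first establish the existence of safe positive and, when needed, negative signal senders for both pairs $(C_k,C_\ell)$ (with $k<\ell$) and $(K_t,C_\ell)$, and then use these as black boxes in short, uniform constructions of the determiners. The existence of safe signal senders is the technical heart of the argument and is proved via the \Bollobas{}--Donadelli--Kohayakawa--Schelp method: one builds a sufficiently large base graph of girth strictly exceeding $\max\{k,\ell,t\}$ that is itself $2$-Ramsey for the pair in question (via a careful probabilistic construction), and then identifies two chosen edges of this base graph to serve as the signal edges. The large girth ensures that any forbidden monochromatic subgraph the sender might create is confined to a single copy of the base graph, which is precisely what gives safeness at the signal edges.

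Given safe positive signal senders for $(C_k,C_\ell)$, I would construct the red-determiner as follows: take a copy of $C_\ell$ with a distinguished edge $d$, and join $d$ to each of the other $\ell-1$ edges of this cycle by a safe positive signal sender. In any $(C_k,C_\ell)$-free coloring of the resulting graph, all edges of the base $C_\ell$ are forced to share a common color; since a monochromatic blue $C_\ell$ is forbidden, this common color must be red, and this is consistent because, for $k<\ell$, a red $C_\ell$ contains no red $C_k$. The blue-determiner is built symmetrically starting from a base $C_k$: the common color cannot be red (otherwise a red $C_k$), and a blue $C_k$ contains no blue $C_\ell$ since $k<\ell$, forcing $d$ blue. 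In each case, the required free coloring of the determiner is obtained by concatenating the free colorings of all attached signal senders, each set to the corresponding common color.

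For $(K_t,C_\ell)$, the red-determiner uses the same base-$C_\ell$ construction: all edges of the base are forced to a common color, which must be red (else blue $C_\ell$), and a red $C_\ell$ contains no $K_t$ for $t\geq3$ and $\ell\geq4$. The blue-determiner splits into two subcases. When $\ell>t$, take a base $K_t$ containing $d$ and join $d$ to each other edge by a safe positive signal sender; the common color cannot be red, and an all-blue $K_t$ is admissible since $\ell>t$ precludes a blue $C_\ell$ inside. When $\ell\leq t$, an all-blue $K_t$ would itself contain a blue $C_\ell$, so instead one attaches safe \emph{negative} signal senders between $d$ and every other edge of the base $K_t$; then each edge of $K_t-d$ has the opposite color of $d$, and if $d$ were red, then $K_t-d$ would be entirely blue and would contain a blue $C_\ell$ (using, for instance, a Hamilton cycle of $K_t-d$ when $\ell=t$, and a shorter cycle of the induced $K_{t-1}$ when $\ell<t$), a contradiction.

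The hardest step is the construction of safe signal senders for asymmetric pairs involving a cycle, which requires nontrivial Ramsey-theoretic input about sparse Ramsey graphs; once these are available, the determiner constructions themselves are brief, and safeness at $d$ follows compositionally, because the safeness of each attached sender guarantees that every monochromatic $C_k$, $C_\ell$, or $K_t$ in the combined gadget is confined either to a single sender or to the base cycle/clique, both of which have been arranged to avoid creating any forbidden copy outside the prescribed color on~$d$.
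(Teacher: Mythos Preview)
Your approach is correct, but it differs from the paper's short sketch in a notable way. The paper does not construct signal senders at all for this lemma: it simply cites Siggers' Corollary~3.12 to obtain the safe red-determiners for both $(C_k,C_\ell)$ and $(K_t,C_\ell)$ directly, and then bootstraps the blue-determiner in a single uniform step---take a copy of $C_k$ (resp.\ $K_t$), pick a signal edge $e$, and attach a red-determiner to every \emph{other} edge; in any $(C_k,C_\ell)$-free (resp.\ $(K_t,C_\ell)$-free) coloring all edges except $e$ are forced red, so $e$ must be blue, and the coloring in which $e$ is blue and the rest are red is $(C_k,C_\ell)$-free (resp.\ $(K_t,C_\ell)$-free) and safe. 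In particular, the paper needs no case distinction on $\ell$ versus $t$.

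Your route instead goes through positive and negative signal senders and splits the $(K_t,C_\ell)$ blue-determiner into the cases $\ell>t$ and $\ell\leq t$. This is also valid, and it is in fact closer in spirit to the multicolor constructions carried out later in Section~\ref{sec:existence_set_signal}, where senders are genuinely needed. The paper's argument is shorter here because the determiner-to-determiner bootstrap sidesteps the awkward situation (an all-blue $K_t$ containing a $C_\ell$) that forces you to switch to negative senders; your approach, on the other hand, illustrates more directly how senders generate determiners, which is the mechanism one needs in the general setting.
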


\begin{proof}
We know that $C_k$ and $K_t$ are $2$-connected and, since $\ell > k$, these graphs contain no induced cycle of length at least $\ell+1$. Therefore, by \cite[Corollary~3.12]{siggers_non-bipartite_2014}, there exist safe red-determiners for $(C_k,C_\ell)$ and $(K_t, C_\ell)$.

Let $C$ be a copy of $C_k$, and let $e$ be any edge of $C$. Attach a copy of the safe red-determiner for $(C_k,C_\ell)$ from the previous paragraph to each edge of $C$ except $e$, and let $D$ be the resulting graph. Clearly, in any $(C_k,C_\ell)$-free coloring of $D$, the edge $e$ is blue. Furthermore, giving each copy of the red-determiner a safe $(C_k,C_\ell)$-free coloring, as guaranteed by property~\ref{axiom:set_determiner_any_color} and the safeness of that determiner, results in a $(C_k,C_\ell)$-free coloring of $D$. The safeness of the red-determiner further ensures that this coloring is safe at the edge $e$. Therefore $D$ is a safe blue-determiner for $(C_k,C_\ell)$, with signal edge $e$. A similar argument yields a safe blue-determiner for $(K_t,C_\ell)$.
\end{proof}\medskip

As explained in the introduction, in this paper we investigate the parameter $s_q$ in the case of multiple cliques and multiple cycles. Our main technical result stated below prove the existence of some set-determiners for such tuples of graphs. In its proof, we need the following results concerning the existence of signal senders in the symmetric setting, due to Siggers~\cite{siggers_highly_2008} and \Rodl{} and Siggers~\cite{rodl_ramsey_2008}, respectively.

\begin{lemma}[{\cite[Lemma 2.2]{siggers_highly_2008}}]\thlabel{lem:signal_senders_cycles}
For any $\ell\geq 4$ and any number of colors $q\geq 2$, there exist positive and negative signal senders for the cycle $C_\ell$ that have girth $\ell$ and distance at least $\ell+1$ between their signal edges.
\end{lemma}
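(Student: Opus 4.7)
The plan is to build negative signal senders first and derive positive ones from them, following the Rödl--Siggers framework. The starting point is a Nešetřil--Rödl type existence result: for every $\ell\geq 4$ and $q\geq 2$ there is a graph $G$ of girth at least $\ell$ with $G\to_q C_\ell$. Passing to a subgraph, fix an edge-minimal $q$-Ramsey graph $G^*\subseteq G$. By minimality, for each edge $h\in E(G^*)$ there is a $C_\ell$-free $q$-coloring $\phi_h$ of $G^*-h$, and every extension of $\phi_h$ to $G^*$ creates a monochromatic $C_\ell$ through $h$; this ``local color obstruction'' attached to each edge is the engine that drives the whole construction.

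For the negative signal sender, I would choose two edges $e,f\in E(G^*)$ at graph-distance at least $\ell+1$. Such a pair exists in any sufficiently large minimally Ramsey graph; if necessary one can enlarge $G^*$ by taking disjoint unions with further high-girth Ramsey graphs and re-minimizing. The sender $S^-$ is then built by amalgamating two copies of $G^*$ along a sparse subgraph disjoint from the local neighborhoods of $e$ and $f$, arranged so that the minimality-based color obstructions at $e$ and $f$ in the two copies become jointly unsatisfiable precisely when $\phi(e)=\phi(f)$. Consequently every $C_\ell$-free $q$-coloring of $S^-$ satisfies $\phi(e)\neq\phi(f)$, and conversely every color pair $(c_1,c_2)$ with $c_1\neq c_2$ is realized by gluing suitable $\phi_h$'s.

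For the positive signal sender, take $q-1$ copies of $S^-$ and attach them in a hub-and-spoke configuration at a single signal edge $e$: each copy sends $e$ to an auxiliary edge $g_i$, which is in turn forced by further determiner-like gadgets (themselves built out of negative senders) to realize the $q-1$ colors distinct from the color of the designated target signal edge $f$. The only coloring consistent with all these restrictions forces $\phi(e)=\phi(f)$, giving a positive sender. When $q=2$ this collapses to the familiar ``chain two negative senders'' construction.

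The hard part is to maintain three conditions simultaneously during the amalgamations: the color-forcing property on the signal edges, girth exactly $\ell$, and distance at least $\ell+1$ between the signal edges. Girth preservation is the most delicate: identifying vertices from two high-girth graphs can easily create short cycles. The fix is to perform every amalgamation along a tree-like or independent subgraph and to place it well away from both signal edges, which simultaneously secures the distance requirement. With this in place, the remaining combinatorial arguments (realizability of every admissible color pair, failure of every forbidden one, and safeness) all reduce to the edge-minimality of $G^*$ and a careful accounting of how cycles of length $\ell$ can traverse the amalgamated regions.
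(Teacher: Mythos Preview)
The paper does not prove this lemma; it is quoted from Siggers~\cite{siggers_highly_2008} and used as a black box, so there is no in-paper argument to compare against. Evaluating your proposal on its own merits, the overall scaffold (start from a girth-$\ell$ graph that is $q$-Ramsey for $C_\ell$, pass to a minimal subgraph, then bootstrap positive senders from negative ones) is the right shape and matches the Rödl--Siggers framework. But the step that actually produces a negative sender is missing.

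You write that $S^-$ is obtained by ``amalgamating two copies of $G^*$ along a sparse subgraph \ldots\ arranged so that the minimality-based color obstructions at $e$ and $f$ in the two copies become jointly unsatisfiable precisely when $\phi(e)=\phi(f)$.'' This is a specification of what the construction must achieve, not a construction. Nothing in the outline says \emph{which} subgraph to amalgamate along, or \emph{why} that identification forces $\phi(e)\neq\phi(f)$ in every $C_\ell$-free $q$-coloring while still admitting a $C_\ell$-free coloring for every ordered pair of distinct colors on $(e,f)$. That implication is the entire content of the lemma, and it does not follow from minimality of $G^*$ alone: minimality tells you that for a single removed edge $h$, every $C_\ell$-free coloring of $G^*-h$ is ``blocked'' at $h$ in all $q$ colors, but it gives no mechanism linking the colors of two different signal edges. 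Siggers' actual construction does not proceed by a vague amalgamation of two minimal graphs; it builds the sender around a designated copy of $C_\ell$ inside the Ramsey graph and uses the girth hypothesis to control exactly which $\ell$-cycles can pass through the signal edges.

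A second, smaller gap: to obtain signal edges at distance at least $\ell+1$ you propose ``enlarging $G^*$ by taking disjoint unions with further high-girth Ramsey graphs and re-minimizing.'' This does nothing --- a minimal $q$-Ramsey subgraph of a disjoint union lies in a single component, so re-minimizing returns you to a graph no larger than the one you started with. The distance condition has to come either from a direct diameter argument for girth-$\ell$ minimal Ramsey graphs, or (as in Siggers) from the way the gadgets are glued.
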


\begin{lemma}[{\cite[lemma 2.2]{rodl_ramsey_2008}}]\thlabel{lem:signal_senders_cliques}
For any graph $H$ that is either $3$-connected or isomorphic to $K_3$, any number of colors $q\geq 2$, and any integer $d\geq 1$, there exist positive and negative signal senders for $H$ in which the signal edges are at distance at least $d$.
\end{lemma}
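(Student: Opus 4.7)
The plan is to first construct basic positive and negative signal senders for $H$ in $q$ colors (without any distance requirement), and then amplify the distance between the signal edges by a chaining construction. The basic construction starts from a $q$-Ramsey-minimal graph $M$ for $H$, which exists by Ramsey's theorem together with subgraph-minimality. For every edge $e\in E(M)$ the graph $M-e$ admits an $H$-free $q$-coloring $\phi_e$, while extending $\phi_e$ to $M$ by assigning any color to $e$ necessarily creates a monochromatic copy of $H$; this critical behavior is the source of the color-forcing that senders need. By selecting two carefully placed edges in a sufficiently large Ramsey-minimal graph (or in a graph obtained by pasting two copies along a small subgraph) and using the $3$-connectivity of $H$ to rule out unwanted copies of $H$ spanning the paste, one obtains a basic negative sender with signal edges at some (possibly small) distance. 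A basic positive sender is then obtained by composing two negative senders in series, together with an auxiliary gadget that enforces consistency across the $q$ available colors (for $q=2$ this is automatic, and for larger $q$ one uses a product-type construction over the color set).

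To amplify the distance between signal edges, I would chain positive senders. Let $S_+$ be a basic positive sender with signal edges $e,f$, and let $S^{(1)},\dots,S^{(k)}$ be pairwise vertex-disjoint copies of $S_+$ except that the ``second'' signal edge of $S^{(i)}$ is identified with the ``first'' signal edge of $S^{(i+1)}$. Declare the remaining outer signal edges of $S^{(1)}$ and $S^{(k)}$ to be the signal edges of the resulting graph $S^{*}$. By property~\ref{axiom:set_sender_colored_edges} applied to each copy, the colors of the two outer signal edges agree in every $H$-free $q$-coloring of $S^{*}$, so $S^{*}$ inherits the positive-sender property. Since the graph distance between the outer signal edges grows linearly in $k$, choosing $k$ large enough guarantees that this distance exceeds $d$. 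A long-distance negative sender is obtained analogously by chaining an odd number of basic negative senders, or by appending a basic negative sender to one end of a long positive chain.

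The main obstacle is verifying that the chaining produces no new monochromatic copies of $H$, i.e.\ that $S^{*}\nrightarrow_q H$. Consecutive copies $S^{(i)},S^{(i+1)}$ share only the two endpoints of their identified signal edge, and non-consecutive copies are vertex-disjoint. For $H$ that is $3$-connected, every copy of $H$ in $S^{*}$ must lie inside a single $S^{(i)}$, since otherwise the two shared vertices would form a $2$-vertex cut of that copy, contradicting $3$-connectivity; the same conclusion holds trivially when $H\cong K_3$ because it has only three vertices, one of which must lie in a single copy. Consequently, an $H$-free $q$-coloring of $S^{*}$ can be assembled from $H$-free colorings of each $S^{(i)}$ that agree on the identified signal edges, which is possible by property~\ref{axiom:set_sender_any_color}, while condition~\ref{axiom:set_sender_not_Ramsey} is preserved throughout. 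The hard part of the whole argument is Step~1: extracting from Ramsey-minimality a pair of edges whose colors are rigidly related in every $H$-free coloring requires a delicate structural use of the $3$-connectivity of $H$, and this is the technical heart of the Rödl--Siggers construction.
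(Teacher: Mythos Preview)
The paper does not give its own proof of this lemma; it is simply quoted from R\"odl and Siggers~\cite{rodl_ramsey_2008}, so there is nothing to compare your proposal against on the paper's side.

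As for the content of your sketch: the distance-amplification step is correct and standard. Chaining positive senders along identified signal edges works because ``same color'' is transitive, and your use of $3$-connectivity (or the three-vertex argument for $K_3$) to confine every copy of $H$ to a single block is exactly the right observation; this is also what underlies the paper's Remark~\ref{rem:safeness}. One small slip: for $q>2$, chaining an odd number of negative senders does \emph{not} yield a negative sender, since ``different color'' is not an equivalence-type relation when there are more than two colors. Your alternative, appending a single negative sender to a long positive chain, is the correct fix and suffices.

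Where your proposal is genuinely incomplete is Step~1, the construction of the basic senders themselves. You correctly identify this as the technical heart of the R\"odl--Siggers argument, but the description you give (``selecting two carefully placed edges in a sufficiently large Ramsey-minimal graph'' and a ``product-type construction over the color set'') is not a proof. In particular, extracting from a $q$-Ramsey-minimal graph a pair of edges whose colors are rigidly linked in \emph{every} $H$-free $q$-coloring is not immediate from minimality alone, and for $q>2$ the passage between positive and negative senders is not the simple parity trick that works for two colors. Since you explicitly defer this step to~\cite{rodl_ramsey_2008}, your write-up is really a reduction (from arbitrary distance to bounded distance) rather than a self-contained proof, which matches how the paper treats the lemma.
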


\begin{rem}\thlabel{rem:safeness}
We claim that the signal senders given by~\thref{lem:signal_senders_cycles,lem:signal_senders_cliques} are safe. First, let $S$ be a signal sender for $C_\ell$ with signal edges $e$ and $f$ and $F$ be any graph such that $V(F)\cap V(S) \subseteq e\cup f$. Let $\phi$ be a $C_\ell$-free coloring of $S$ and $\psi$ be a coloring of $S\cup F$ extending $\phi$. Suppose that $\psi_{|F}$ is also $C_\ell$-free but $\psi$ itself is not. This means that there exists a monochromatic copy $C$ of $C_\ell$ containing a vertex $v\in V(S)\setminus V(F)$ and a vertex $w\in V(F)\setminus V(S)$. There are two disjoint paths between $v$ and $w$ in $C$. Hence $C$ must contain two vertices from $V(e)\cup V(f)$. But $C$ cannot contain vertices from two different signal edges, since the distance between $e$ and $f$ in $S$ is at least $\ell+1$, so $C$ must contain both vertices of one signal edge, say $e$. But then $C\cup e$ contains a cycle of length less than $\ell$ that is fully contained in $S$, contradicting the fact that $S$ has girth $\ell$. Hence $\psi$ must be a $C_\ell$-free coloring. A similar argument shows that if $H$ is $3$-connected or isomorphic to $K_3$ and $S$ is a signal sender as given by~\thref{lem:signal_senders_cliques} with $d>v(H)$, then $S$ is safe. 
\end{rem}

We are now ready to state our main technical result, proving the existence of safe \CycleColors{}-determiners and safe \CliqueColors{}-determiners for $q$-tuples consisting of cycles and cliques. 

\begin{theorem}\thlabel{lem:cycle_and_clique_determiners}
Let $\ell\geq 4$, $t\geq 3$, and $q_1,q_2\geq 1$ be integers. Then there exist safe \CycleColors-determiners and safe \CliqueColors-determiners for $\cT(q_1,q_2,\ell,t)$. 
\end{theorem}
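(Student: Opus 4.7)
The plan is to deduce both types of set-determiners as essentially immediate consequences of the set-senders supplied by the companion result \thref{lem:cycle_and_clique_senders}, which we will use as a black box to furnish safe positive \CycleColors-senders and safe positive \CliqueColors-senders for $\cT$. The guiding observation is that, by definition, a positive $X$-sender is already a strictly stronger object than an $X$-determiner: it controls the colors of two edges rather than one, so simply ``forgetting'' one of its two signal edges leaves an $X$-determiner of the same kind.

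Concretely, given a safe positive \CycleColors-sender $S$ with signal edges $e$ and $f$, I would designate $d := e$ as its single signal edge and check the three determiner axioms of \thref{def:safe_set_determiner} against the sender axioms of \thref{def:safe_set_sender}. Axiom \ref{axiom:set_determiner_not_Ramsey} coincides with \ref{axiom:set_sender_not_Ramsey}. For \ref{axiom:set_determiner_colored_edge}, any $\cT$-free coloring $\phi$ of $S$ satisfies $\phi(e)=\phi(f)\in \CycleColors$ by \ref{axiom:set_sender_colored_edges}, so $\phi(d)\in \CycleColors$. For \ref{axiom:set_determiner_any_color}, given $c\in \CycleColors$, invoking \ref{axiom:set_sender_any_color} with $c_1=c_2=c$ yields a $\cT$-free coloring of $S$ with $\phi(d)=c$. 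The required safeness at $\{d\}$ comes for free from the safeness of the sender at $\{e,f\}$, since the condition in \thref{def:safe_coloring_2colors} quantifies over strictly fewer external graphs $G$ when the distinguished edge set shrinks to a subset. The safe \CliqueColors-determiner is produced by running the same argument verbatim, starting from a safe positive \CliqueColors-sender.

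Under this strategy the theorem will be a short corollary of \thref{lem:cycle_and_clique_senders}, and almost all of the genuine work is deferred to that lemma. The main obstacle will therefore be the construction of the set-senders themselves; I expect this to require carefully splicing together the symmetric signal senders of \thref{lem:signal_senders_cycles,lem:signal_senders_cliques} in such a way that every cycle-color class remains $C_\ell$-free and every clique-color class remains $K_t$-free, while still forcing the two distinguished edges to lie in the designated subset \CycleColors{} or \CliqueColors{} and to take the same color. Once that is in hand, the derivation of the determiners above is a routine verification.
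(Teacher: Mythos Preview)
Your reduction has a genuine gap: \thref{lem:cycle_and_clique_senders} only asserts the existence of \CycleColors-senders when $q_1>1$, and of \CliqueColors-senders when $q_2>1$. Your argument therefore produces no \CycleColors-determiner when $q_1=1$ and no \CliqueColors-determiner when $q_2=1$, whereas \thref{lem:cycle_and_clique_determiners} demands both for all $q_1,q_2\geq 1$. These boundary cases are not a technicality one can patch afterwards; they are precisely the cases used, for instance, in the two-color applications of Section~\ref{sec:2colors} and in the proof of \thref{lem:packing_equivalence} when $q_1=1$ or $q_2=1$.

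There is also a logical dependency you have inverted. In the paper, the senders of \thref{lem:cycle_and_clique_senders} are built \emph{from} the determiners of \thref{lem:cycle_and_clique_determiners}: one takes a symmetric signal sender for $C_\ell$ (respectively $K_t$) from \thref{lem:signal_senders_cycles} or \thref{lem:signal_senders_cliques} and attaches a copy of the already-constructed \CycleColors-determiner (respectively \CliqueColors-determiner) to every edge, forcing all colors into the correct subset. So treating \thref{lem:cycle_and_clique_senders} as a black box while proving \thref{lem:cycle_and_clique_determiners} is circular. Your closing paragraph suggests you would instead prove the senders directly by ``splicing'' symmetric senders, but this is exactly the hard step, and you give no mechanism for confining the colors of a $q$-coloring of a symmetric $q_1$-color sender to $\CycleColors$; the paper's mechanism for that confinement \emph{is} the determiner. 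The actual construction of the determiners goes through an entirely different route: one first builds a special $q$-Ramsey graph $\Gamma$ for $\cT$ (via a random hypergraph with large girth whose hyperedges are replaced by copies of $K_{r_{q_2}(K_t)}$), then extracts a minimal Ramsey subgraph $G$, and obtains the \CliqueColors-determiner by deleting all but one edge of a single hyperedge clique; the \CycleColors-determiner is then built from it.
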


Most of Section~\ref{sec:existence_set_signal} is devoted to the proof of~\thref{lem:cycle_and_clique_determiners}. In the same section, we also prove~\thref{lem:cycle_and_clique_senders} below, showing that safe \CycleColors-senders and safe \CliqueColors-senders both exist. 

\begin{theorem}\thlabel{lem:cycle_and_clique_senders}
Let $\ell\geq 4$, $t\geq 3$, and $q_1,q_2\geq 1$ be integers. If $q_1>1$ then there exist safe positive and negative \CycleColors-senders for $\cT(q_1,q_2,\ell,t)$. If $q_2>1$ then there exist safe positive and negative \CliqueColors-senders for $\cT(q_1,q_2,\ell,t)$.
\end{theorem}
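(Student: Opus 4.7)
The plan is to derive both kinds of set-senders from a single template: take a signal sender from the symmetric setting on the appropriate color class, then attach a safe set-determiner, provided by \thref{lem:cycle_and_clique_determiners}, to every edge. I detail the construction for a safe negative \CycleColors-sender; the positive version is identical, and the \CliqueColors-sender case is completely analogous, using \thref{lem:signal_senders_cliques} in place of \thref{lem:signal_senders_cycles}, with the distance parameter chosen large enough (e.g., exceeding $v(K_t)$) so that the argument of \thref{rem:safeness} applies.

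Assuming $q_1 \geq 2$, let $S$ be a negative signal sender for $C_\ell$ in $q_1$ colors, as given by \thref{lem:signal_senders_cycles}, with signal edges $e,f$, girth $\ell$, and $\mathrm{dist}(e,f)\geq \ell+1$. By \thref{rem:safeness}, $S$ is safe when viewed as a $q_1$-color signal sender for $C_\ell$. Form $S'$ by attaching, to every edge $e'$ of $S$, a fresh copy of a safe \CycleColors-determiner for $\cT$ whose signal edge is $e'$, with different copies pairwise vertex-disjoint outside~$S$.

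Now, given any $\cT$-free coloring $\phi$ of $S'$, property \ref{axiom:set_determiner_colored_edge} applied to each attached determiner forces every edge of $S$ to receive a cycle-color, so $\phi_{|S}$ is a $C_\ell$-free $q_1$-coloring of $S$; the sender property of $S$ then yields distinct cycle-colors on $e$ and $f$, verifying \ref{axiom:set_sender_colored_edges}. Conversely, for any distinct $c_1,c_2 \in \CycleColors$, pick a $C_\ell$-free $q_1$-coloring $\phi_0$ of $S$ with $\phi_0(e)=c_1$ and $\phi_0(f)=c_2$, and extend it determiner-by-determiner using \ref{axiom:set_determiner_any_color} together with safeness of the determiner at its signal edge. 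The resulting coloring is $\cT$-free: any monochromatic $C_\ell$ in a cycle-color would have to cross the single-edge interface between $S$ and some attached determiner, which is excluded by safeness of that determiner (since $\phi_0$ is $C_\ell$-free on $S$); and any monochromatic $K_t$ in a clique-color cannot touch $S$ at all (all $S$-edges are cycle-colored), so it would lie inside a single determiner and is excluded by that determiner's $\cT$-freeness. This establishes \ref{axiom:set_sender_not_Ramsey} and \ref{axiom:set_sender_any_color}.

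Finally, safeness of $S'$ at $\{e,f\}$ follows by the same compositional argument: when extending by a graph $F$ that meets $S'$ only at $V(e)\cup V(f)$, any cycle-colored $C_\ell$ straddling $S'$ and $F$ is ruled out by the girth-and-distance argument in \thref{rem:safeness} applied to $S$, while any clique-colored $K_t$ straddling $S'$ and $F$ can only meet $S$ in cycle-colored edges (hence not at all) and so must lie inside (one determiner) $\cup F$, ruled out by the determiner's safeness at its own signal edge. The main obstacle is precisely this two-layer composition check; it is clean because every gadget interfaces with the rest of $S'$ via a single edge, and because forcing all $S$-edges into cycle-colors completely decouples cycle-monochromatic from clique-monochromatic threats.
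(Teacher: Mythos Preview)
Your proposal is correct and follows essentially the same approach as the paper: take a safe signal sender $S$ for the symmetric problem on the appropriate color class (from \thref{lem:signal_senders_cycles} or \thref{lem:signal_senders_cliques}, combined with \thref{rem:safeness}), attach a safe set-determiner from \thref{lem:cycle_and_clique_determiners} to every edge, and verify the set-sender axioms and safeness by composing the safeness of $S$ with that of the determiners. Your write-up is in fact more explicit than the paper's own proof (which omits the verification of \ref{axiom:set_sender_colored_edges} and compresses the safeness check to a single sentence); the only place where your argument could be tightened is the safeness paragraph, where the cleanest route is the inductive one you allude to with ``compositional'': first show $\psi_{|S\cup F}$ is $\cT$-free (cycle-colored $C_\ell$'s via \thref{rem:safeness}, clique-colored $K_t$'s because $S$ carries only cycle-colors), then add the determiners one at a time using their safeness at their signal edges.
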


\section{Two colors cases}\label{sec:2colors}
Throughout this section the number of colors $q$ is fixed to be $2$, and we drop the color index $q$ in the notation. In this section we determine $s(K_t,T_\ell)$, $s(C_k,C_\ell)$, and $s(K_t,C_\ell)$. We prove that the lower bound in~\eqref{eq:trivial_bounds_sq} is tight for $s(K_t,T_\ell)$ and $s(C_k,C_\ell)$, but not for $s(K_t,C_\ell)$. In the latter two cases, we exemplify the power of the gadget graphs introduced in Section~\ref{sec:preliminaries}. We begin with the case of one clique and one tree.

\begin{proof}[Proof of \thref{thm:2_colors_completetree}]
Let $\ell\geq 2$ and $t\geq3$. First note that the inequality $s(K_t,T_\ell)>t-2$ follows directly from~\eqref{eq:trivial_bounds_sq}. 
For the upper bound, we construct a graph $G$ of minimum degree $t-1$ as follows. Let $H\cong K_{(t-1)(\ell-1)},$ let $F\cong K_{t}$, and let $v$ be a vertex of $F$. For each vertex $u$ of $F-v$, 
create a copy $H_u$ of $H$ on a new set of vertices and identify $u$ with an arbitrary vertex of $H_u$. Note that $d_G(v)=t-1$. We claim that $G\to (K_t,T_\ell)$ while $G-v\nto (K_t,T_\ell)$. For the former, suppose for a contradiction that $\phi$ is a $(K_t,T_\ell)$-free red/blue-coloring of $G.$ Then $\phi$ is $(K_t,T_\ell)$-free on $H_u\cong K_{(t-1)(\ell-1)}$ for every vertex $u$ in $F-v.$ By \cite[Lemma 9]{burr_ramsey-minimal_1982}, there is a unique $(K_t,T_\ell)$-free red/blue-coloring $\widetilde\phi$ of $H_u$, in which the subgraph of blue edges of $H$ is a collection of $(t-1)$ vertex-disjoint cliques, each of size $(\ell-1)$. In particular, in the coloring $\phi$, every vertex $u$ of $F-v$ is incident to a blue copy of $K_{\ell-1}$ in $H_u$. Therefore, every edge of $F$ must be red, creating a monochromatic red copy of $K_t$, a contradiction. 
For the second claim, color the edges of $F-v$ red and use the $(K_t,T_{\ell})$-free coloring $\widetilde \phi$ for every $H_u.$ It is easy to see that this red/blue-coloring of $G-v$ is $(K_t,T_\ell)$-free. 
Thus, any subgraph $G'$ of $G$ that is Ramsey-minimal for $(K_t,T_\ell)$ must contain $v.$ This proves 
$s(K_t,T_\ell)\leq d_G(v)=t-1.$\end{proof}

We now turn our attention to pairs of graphs involving cycles. It follows from~\eqref{eq:trivial_bounds_sq} that $s(C_k,C_\ell)>2$. For $k<\ell$, we now use the existence of safe determiners given by~\thref{lem:siggers_safe_det_cycles} to exhibit a Ramsey-minimal graph for $(C_k,C_\ell)$, with minimum degree three. \thref{thm:2_colors_cyclecycle} then follows by symmetry, since $s(C_k,C_\ell)=s(C_\ell,C_k)$.

\begin{prop}\thlabel{prop:upper_bound_cycleVScycle}
For any $4\leq k<\ell$, we have
\[s(C_k,C_\ell)\leq 3.\]
\end{prop}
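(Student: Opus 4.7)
The plan is to exhibit an explicit graph $G$ containing a vertex $v$ of degree exactly $3$ such that $G\to(C_k,C_\ell)$ while $G-v\nrightarrow(C_k,C_\ell)$. Any Ramsey-minimal subgraph of $G$ must then contain $v$, so its minimum degree is at most $d_G(v)=3$, yielding $s(C_k,C_\ell)\leq 3$.

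The construction is as follows. Take four vertices $v,x,y,z$, and for each of the three pairs from $\{x,y,z\}$ add two internally disjoint paths joining them: a \emph{red path} of length $k-2$ and a \emph{blue path} of length $\ell-2$. Arrange the six paths to be internally vertex-disjoint from one another, and add the edges $vx,vy,vz$. Finally, to each edge of a red path attach a safe red-determiner for $(C_k,C_\ell)$ supplied by \thref{lem:siggers_safe_det_cycles}, and to each edge of a blue path attach a safe blue-determiner; each gadget is otherwise vertex-disjoint from everything already built. Call the resulting graph $G$; by construction $d_G(v)=3$.

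To see that $G\to(C_k,C_\ell)$, consider any $(C_k,C_\ell)$-free coloring of $G$. Property \ref{axiom:set_determiner_colored_edge} of each red-determiner forces every edge of a red path to be red, and similarly every edge of a blue path to be blue. By pigeonhole two of $vx,vy,vz$ receive the same color. If $vx,vy$ are both red, prepending and appending $v$ to the red $(x,y)$-path yields a monochromatic red cycle of length $(k-2)+2=k$; if two of the edges at $v$ are blue, the analogous argument through a blue $(x,y)$-path yields a blue $C_\ell$. Either case contradicts freeness.

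For the other direction, I would color every red-path edge red and every blue-path edge blue, and extend across each attached determiner using a safe $(C_k,C_\ell)$-free coloring realizing its forced signal color (property \ref{axiom:set_determiner_any_color} combined with safeness). On the six base paths alone, the red subgraph is a single cycle of length $3(k-2)$, distinct from $k$ for every $k\geq 4$; the blue subgraph is a cycle of length $3(\ell-2)\neq\ell$ for $\ell\geq 4$, so no forbidden monochromatic copy appears on the base. The one technical point I would treat most carefully is the propagation step: applying the safeness clause of \thref{def:safe_coloring_2colors} once per attached determiner (each of which shares with the rest of $G$ only its signal edge) extends the freeness of the base coloring to a $(C_k,C_\ell)$-free coloring of all of $G-v$, completing the argument.
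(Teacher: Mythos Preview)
Your proof is correct and follows essentially the same construction and argument as the paper: the same three-vertex base with two internally disjoint paths between each pair, the same attachment of safe determiners, the same pigeonhole argument at $v$, and the same safe extension for $G-v$. Your explicit observation that the monochromatic base subgraphs are single cycles of lengths $3(k-2)\neq k$ and $3(\ell-2)\neq\ell$ is exactly the content of the paper's ``it is easy to see'' step.
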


\begin{proof}
We construct an appropriate Ramsey-minimal graph. Start with an empty graph on three vertices $\set{x,y,z}$, and between any pair of these vertices add two paths, one of length $k-2$ and one of length $\ell-2$, so that all six paths are internally vertex-disjoint. 
Let $D_r$ and $D_b$ be a safe red- and blue-determiner for $(C_k, C_\ell)$, respectively, as guaranteed by~\thref{lem:siggers_safe_det_cycles}. Attach a copy of $D_r$ to every edge contained in one of the paths of length $k-2$ between $x,y,$ and $z$ and a copy of $D_b$ to every edge contained in one of the paths of length $\ell-2$. Finally, add a new vertex $v$ adjacent to $x$, $y$, and $z$, and call the resulting graph $F$. The construction is illustrated in Figure~\ref{fig:Ex_C4C5} for the case $k=4$ and $\ell=5$, showing only the signal edges for each determiner and the edges incident to $v$. We will now show that $F\to(C_k, C_\ell)$ but $F-v\nto(C_k, C_\ell)$, implying that any subgraph $G$ of $F$ that is Ramsey-minimal for $(C_k, C_\ell)$ has to contain $v$, which in turn proves the proposition. 

Consider an arbitrary red/blue-coloring of $F$. If any copy of $D_r$ or $D_b$ contains a red copy of $C_k$ or a blue copy of $C_\ell$, we are done. Otherwise, by property \ref{axiom:set_determiner_colored_edge} of $D_r$ and $D_b$, the paths of length $k-2$ between the vertices $x,y$, and $z$ must be all red and the paths of length $\ell-2$ between those vertices must be all blue. By the pigeonhole principle, two of the edges incident to $v$ must have the same color; these two edges together with the corresponding red $(k-2)$-path or blue $(\ell-2)$-path then form a red copy of $C_k$ or a blue copy of $C_\ell$. 

\begin{figure}[ht]
    \centering
    \TikzExampleCkCl{4}{4}
    \caption{The graph $F$ in the proof of~\thref{prop:upper_bound_cycleVScycle}}
    \label{fig:Ex_C4C5}
\end{figure}

For the second claim, consider $F-v$ and color each path of length $k-2$ between the vertices $x,y,$ and $z$ red and each path of length $\ell-2$ between those vertices blue. Since $k,\ell>3$, it is easy to see that this partial coloring of $F-v$ is $(C_k, C_\ell)$-free. By property~\ref{axiom:set_determiner_any_color}  of the copies of $D_r$ and $D_b$, we can extend this coloring to the copies of $D_r$ and $D_b$ so that each determiner has a safe $(C_k, C_\ell)$-free coloring. By definition of safeness, this is a $(C_k,C_\ell)$-free coloring of $F-v$.
\end{proof}

Note that the construction requires $k>3$. The case $k=3$ is covered by our next construction, dealing with cliques. To that end, we turn our attention to $s(K_t,C_\ell)$, proving~\thref{thm:2_colors_completecycle}. The idea behind the upper bound construction is very similar to the previous one.

\begin{prop}\thlabel{prop:upper_bound_completeVScycle}
For any integers $t\geq 3$ and $\ell\geq 4$, we have
\[s(K_t,C_\ell)\leq 2(t-1).\]
\end{prop}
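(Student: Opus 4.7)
The plan is to adapt the approach of Proposition~\thref{prop:upper_bound_cycleVScycle}, constructing a graph $F$ with a distinguished vertex $v$ of degree $2(t-1)$ whose removal destroys the Ramsey property. For each $i\in[t-1]$ take a path $P_i$ of length $\ell-2$ with endpoints $a_i$ and $d_i$, making the $P_i$ internally vertex-disjoint. For every $i\ne j$ add the four \emph{cross edges} $a_ia_j$, $a_id_j$, $d_ia_j$, $d_id_j$. By Lemma~\thref{lem:siggers_safe_det_clique_cycle} we may attach a safe blue-determiner for $(K_t,C_\ell)$ to every edge of every $P_i$ and a safe red-determiner for $(K_t,C_\ell)$ to every cross edge. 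Finally add a new vertex $v$ joined to $a_i$ and $d_i$ for each $i\in[t-1]$; call the resulting graph $F$. Then $d_F(v)=2(t-1)$.

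For the Ramsey property $F\to(K_t,C_\ell)$, fix any red/blue coloring of $F$. Either some attached determiner already contains a monochromatic forbidden subgraph and we are done, or by property~\ref{axiom:set_determiner_colored_edge} every edge of every $P_i$ is blue and every cross edge is red. In the latter case, if both $va_i$ and $vd_i$ are blue for some $i$, then $v\,a_i\,P_i\,d_i\,v$ is a blue $C_\ell$. Otherwise, for each $i$ we can pick $x_i\in\{a_i,d_i\}$ with $vx_i$ red; then $\{v,x_1,\dots,x_{t-1}\}$ induces a red $K_t$, since each $vx_i$ is red by choice and each $x_ix_j$ for $i\ne j$ is a red cross edge.

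For $F-v\nto(K_t,C_\ell)$, color every edge of every $P_i$ blue and every cross edge red. The blue subgraph of the base graph is a disjoint union of paths, so contains no cycle at all; the red subgraph is the complete $(t-1)$-partite graph with parts $\{a_i,d_i\}$, whose largest clique has size $t-1<t$. Invoking the safeness of each attached determiner (property~\ref{axiom:set_determiner_any_color} together with Definition~\thref{def:safe_coloring_2colors}) lets us extend this coloring one determiner at a time, since each determiner shares only its two signal-edge endpoints with the rest of $F-v$. Hence $F-v$ admits a $(K_t,C_\ell)$-free coloring, so every Ramsey-minimal subgraph of $F$ contains $v$, giving $s(K_t,C_\ell)\le d_F(v)=2(t-1)$. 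The main design point is the two-vertex pair structure $\{a_i,d_i\}$ with all four cross edges between distinct pairs: it gives $v$ degree $2(t-1)$ and simultaneously forces a blue $C_\ell$ at $v$ unless at least one red edge per pair appears, while keeping the red base graph $(t-1)$-partite and hence $K_t$-free.
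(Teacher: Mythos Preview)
Your proof is correct and is essentially the same as the paper's: your ``cross edges'' on the endpoints $\{a_i,d_i\}_{i\in[t-1]}$ form exactly the complete $(t-1)$-partite graph $K_{2,2,\dots,2}$ that the paper starts from, and your paths, determiners, and extra vertex $v$ coincide with theirs. The two arguments for $F\to(K_t,C_\ell)$ and $F-v\nto(K_t,C_\ell)$ are likewise identical in substance.
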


\begin{proof}
Let $t\geq 3$ and $\ell\geq 4$. Using the safe determiners from~\thref{lem:siggers_safe_det_clique_cycle}, we construct a graph $G$ that is Ramsey-minimal for $(K_t,C_\ell)$ and satisfies $\delta(G) \leq 2(t-1)$.

We start with the graph $T = K_{2,2,\ldots,2}$, the complete $(t-1)$-partite graph where each independent set contains two vertices. For any pair of vertices in the same class, add a path of length $\ell-2$; as before, all these paths are  vertex-disjoint.
Let $D_r$ and $D_b$ be a safe red- and blue-determiner, respectively, as guaranteed by~\thref{lem:siggers_safe_det_clique_cycle}. Attach a copy of $D_r$ to each edge of $T$ and a copy of $D_b$ to each edge belonging to one of the $t-1$ paths of length $\ell-2$. Add a new vertex $v$ adjacent to all vertices of $T$ and call the resulting graph $F$. This construction is illustrated in Figure~\ref{fig:example_K5_Cl} for $t=5$ and $\ell=5$, showing only the signal edges for each determiners and the edges incident to $v$. As in the proof of~\thref{prop:upper_bound_cycleVScycle}, we will show that 
$F\to (K_t, C_\ell)$ but $F-v\nto (K_t, C_\ell)$.
 
To see the first claim, consider an arbitrary red/blue-coloring of $F$. If any copy of $D_r$ or $D_b$ contains a red copy of $K_t$ or a blue copy of $C_\ell$, then we are done. Hence, all determiners have $(K_t,C_\ell)$-free colorings, forcing the edges of $T$ to be all red and the edges in the $(\ell-2)$-paths connecting pairs of vertices from the same partite set of $T$ to be blue. Now, if both edges between $v$ and one of the vertex classes of $T$ are blue, there is a blue copy of $C_\ell$. Otherwise, there is a red edge from $v$ to each of the $t-1$ partite sets of $T$, resulting in an all-red copy of $K_t$. 
 
\begin{figure}
    \centering
    \TikzExampleKrCl{4}{4} 
    \caption{The graph $F$ in the proof of~\thref{prop:upper_bound_completeVScycle}}
    \label{fig:example_K5_Cl}
\end{figure}
 
For the second claim, color the edges of $T$ red and the edges of the $(\ell-2)$-paths connecting vertices from the same vertex class of $T$ blue. Then, using property~\ref{axiom:set_determiner_any_color} of the copies of $D_r$ and $D_b$, extend this coloring to all determiners so that each one receives a safe $(K_t,C_\ell)$-free coloring. It is easy to see that this gives a $(K_t,C_\ell)$-free coloring of the entire graph $F$. 
\end{proof}

Note that this upper bound for $s(K_t,C_\ell)$ does not match the lower bound from \eqref{eq:trivial_bounds_sq}, as the latter only implies $s(K_t,C_\ell)\geq t+1$. However, \thref{prop:lower_bound_completeVScycle} will prove that our construction does yield the best possible upper bound. We will need an auxiliary lemma, which shows that, if $G$ is a graph on fewer than $2(t-1)$ vertices with no $t$-clique, then there must be at least one vertex common to all $(t-1)$-cliques.

\begin{lemma}\thlabel{lemma:lower_bound_completeVScycle}
Let $t\geq 3$ be any integer and $G$ be a graph on $n<2(t-1)$ vertices with $K_{t-1}\subseteq G$. If 
\[\bigcap_{\substack{H\subseteq G\\H\cong K_{t-1}}} V(H)  = \emptyset \ ,\]
then $K_{t}\subseteq G$.
\end{lemma}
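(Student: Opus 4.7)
The plan is to argue by contradiction: assume $K_t \not\subseteq G$ and that the intersection of all $(t-1)$-cliques of $G$ is empty, and derive a contradiction.

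The core observation is that, for any $(t-1)$-clique $A$ in $G$ and any vertex $v \in V(A)$ adjacent to every vertex of $R := V(G) \setminus V(A)$, the vertex $v$ must lie in every $(t-1)$-clique of $G$. Indeed, if $C$ is such a clique with $v \notin V(C)$, then $V(C) \subseteq (V(A) \setminus \{v\}) \cup R$, and since $|V(A) \setminus \{v\}| = t-2 < |V(C)|$ there is some $u \in V(C) \cap R$. Then $V(C) \cup \{v\}$ is a $K_t$ (the edges inside $V(C)$ are given, $v$ is adjacent to $V(C) \cap V(A)$ within the clique $A$, and $v$ is adjacent to the vertices of $V(C) \cap R$ by assumption), contradicting $K_t \not\subseteq G$. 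Under the empty-intersection hypothesis, no such $v$ exists, so the non-neighbor sets $f(u) := V(A) \setminus N(u)$ for $u \in R$ must cover $V(A)$; since $|R| \leq t-2 < t-1$, pigeonhole gives some $|f(u)| \geq 2$.

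To finish, I would pass to a Helly-type minimal-family argument. Because $n \leq 2(t-1) - 1$ forces any two $(t-1)$-cliques to share at least $2(t-1) - n \geq 1$ vertex, a minimal family $\mathcal{F} = \{C_1, \ldots, C_k\}$ of $(t-1)$-cliques with $\bigcap_i V(C_i) = \emptyset$ satisfies $k \geq 3$. For each $i$ I would choose $v_i \in \bigcap_{j \neq i} V(C_j) \setminus V(C_i)$; the $v_i$'s are distinct, each $V(C_i)$ contains the $(k-1)$-set $\{v_j : j \neq i\}$, and a short check shows that $\{v_1, \ldots, v_k\}$ is itself a clique (any two $v_j, v_\ell$ lie in a common $V(C_i)$ with $i \neq j, \ell$, using $k \geq 3$), forcing $k \leq t-1$ or else $\{v_1, \ldots, v_t\}$ would already give a $K_t$. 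Writing $W_i := V(C_i) \setminus \{v_j : j \neq i\}$ of size $t - k$, the vertex budget $|\bigcup W_i| \leq n - k \leq 2(t-1) - 1 - k$ compared against $\sum |W_i| = k(t-k)$ rules out pairwise disjointness of the $W_i$'s; any $w \in W_i \cap W_j$ is then adjacent to every $v_\ell$ (using the cliques $C_i$ and $C_j$), providing a $K_{k+1}$ on $\{v_1, \ldots, v_k, w\}$.

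The main technical obstacle is to promote this $K_{k+1}$ all the way to a $K_t$. I expect to handle this by analyzing the multiplicities $m_v := |\{i : v \in W_i\}|$ for $v \in \bigcup W_i$: the bound $m_v \leq k - 1$ (else $v$ lies in every $V(C_i)$, contradicting empty intersection), the total $\sum m_v = k(t-k)$, and the cap $|\bigcup W_i| \leq 2(t-1) - 1 - k$ together force a large set $U$ of shared vertices with $m_v \geq 2$, and one shows that a suitably large subset of $\{v_1, \ldots, v_k\} \cup U$ forms a clique of size $t$. The case $k = 3$ is transparent because any two subsets of $\{1,2,3\}$ of size $\geq 2$ intersect, so $\{v_1, v_2, v_3\} \cup U$ is itself a clique and the counting gives $|U| \geq t - 3$; the general case should reduce to an intersecting-family argument on the index sets $I_v := \{i : v \in W_i\}$, which is the delicate part I anticipate being the main difficulty.
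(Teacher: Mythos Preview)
Your argument is incomplete in exactly the place you flag. The $k=3$ case is fine: the index sets $I_v$ are $2$-subsets of $\{1,2,3\}$, any two of which intersect, so the set $U=\{v:m_v\geq 2\}$ is automatically a clique and the counting gives $|U|\geq t-3$. But for $k\geq 4$ this breaks down: $2$-subsets of $[k]$ need not intersect, so there is no reason for $U$ to be a clique, and the averaging $\sum m_v = k(t-k)$ against $|\bigcup W_i|\leq 2t-3-k$ only gives average multiplicity below $k/2$, so you cannot force enough vertices to have $|I_v|>k/2$ either. One might hope instead that some $W_i$ has all of its vertices of multiplicity $\geq 2$ (which would give $\{v_1,\dots,v_k\}\cup W_i\cong K_t$ immediately), but the same counting does not rule out one multiplicity-$1$ vertex in every $W_i$ once $t$ is moderately large. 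So the ``intersecting-family'' plan is not a routine completion; as written, the proof only covers the case $k=3$. I would also drop the opening paragraph about the non-neighbour sets $f(u)=V(A)\setminus N(u)$: it is correct but never used.

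The paper avoids this difficulty by arguing via \emph{strong induction on $t$}, with a different decomposition. Given a minimal family $\{H_0,H_1,\dots,H_m\}$ of $(t-1)$-cliques with empty intersection, it sets aside $H_0$ and takes the single set $S=\bigcap_{i\geq 1}V(H_i)$, which is nonempty by minimality and disjoint from $V(H_0)$. Writing $|S|=t-1-j$, the residuals $S_i=V(H_i)\setminus S$ are $j$-cliques, each fully joined to $S$, so it suffices to find a $(j+1)$-clique inside $\bigcup_{i\geq 1}S_i$. If $|\bigcup S_i|\geq 2j$, a size count forces $|V(H_0)\cap\bigcup S_i|\geq j+1$, giving the desired clique inside $H_0$; otherwise $|\bigcup S_i|<2j$ and one applies the induction hypothesis (at level $j+1<t$) to the $j$-cliques $S_i$, whose intersection is empty. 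The point is that removing a \emph{common} set $S$ rather than the clique-dependent sets $\{v_j:j\neq i\}$ produces residuals that are small enough for the inductive hypothesis to bite; your $W_i$'s are generally too large for the same trick, which is precisely why the direct intersecting-family argument stalls.
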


\begin{proof} We proceed by strong induction on $t$. It is easy to check that the  statement is true for $t=3$. Assume now that $t\geq 3$, and suppose the statement to be true up to $t$.

Let $G$ be a graph on $n <2t$ vertices, and let $\mathcal{F}=\set*{H_0,\ldots,H_m}$ be a family of distinct $t$-cliques contained in $G$ whose joint intersection is empty. Suppose additionally that this family is minimal, meaning that every subfamily has a non-empty intersection. Note that we may assume that $m\geq 1$.

Let $S=V(H_1)\cap \ldots \cap V(H_{m})$ be the vertex set in the intersection of the $t$-cliques $H_1,\ldots,H_{m}$ (without considering $H_0$). By the minimality of the family $\mathcal{F}$, we know that $\size*{S}>0$. Further, since $G$ has fewer than $2t$ vertices, it cannot contain two disjoint $t$-cliques. Therefore, as $H_0$ is a $t$-clique and  $S$ is another clique disjoint from $H_0$ in $G$, it follows that $\size{S}<t-1$. Write $\size{S} = t-j$ for some $0<j<k$.

For $i\in[m]$, let $S_i=V(H_i)\setminus S$. Note that each $S_i$ induces a $j$-clique. Each vertex in $S_i$ is adjacent to all vertices in $S$. Therefore, since $\size*{S}=t-j$, if we can find a $(j+1)$-clique in $G\brackets*{\bigcup_{i=1}^m S_i}$, we will have found a $(t+1)$-clique in $G$. We consider two possible cases.

\emph{\underline{Case 1}:} Suppose that $\bigcup_{i=1}^{m} S_i$ has at least $2j$ elements. By definition, both $V(H_0)$ and $\bigcup_{i=1}^{m} S_i$ have empty intersection with $S$, therefore they are both contained in the set $V(G)\setminus S$ whose size is less than $t+j$.  
Since $\size{V(H_0)}=t$ and $\size{\bigcup_{i=1}^{m} S_i} \geq 2j$, they must have at least $j+1$ vertices in common, forming a $(j+1)$-clique in $G\brackets*{\bigcup_{i=1}^{m} S_i}$. 

\emph{\underline{Case 2}:} Assume next that $\bigcup_{i=1}^{m} S_i$ has fewer than $2j$ elements. Then  $G[\bigcup_{i=1}^{m} S_i]$ is a graph on fewer than $2j$ vertices containing a $j$-clique, namely $G[S_1]$. Since $j<t$ and $\bigcap_{i=1}^{m} S_i=\emptyset$, by the induction hypothesis, it follows that $G\brackets*{\bigcup_{i=1}^{m} S_i}$ contains a $(j+1)$-clique. 
\end{proof}

We are now ready to prove a lower bound on $s(K_t, C_\ell)$ from~\thref{lemma:lower_bound_completeVScycle}.~\thref{thm:2_colors_cyclecycle} then follows immediately from~\thref{prop:upper_bound_completeVScycle}.

\begin{prop}\thlabel{prop:lower_bound_completeVScycle}
For any integers $t\geq 3$ and $\ell\geq 4$,  we have
\[s(K_t,C_\ell)\geq 2(t-1).\]
\end{prop}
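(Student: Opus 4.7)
The plan is to show, by contradiction, that every Ramsey-minimal graph $G$ for $(K_t,C_\ell)$ satisfies $\delta(G)\geq 2(t-1)$. Suppose some vertex $v$ has $d_G(v) < 2(t-1)$. Since the trivial bound $d_G(v)\geq t+1$ guarantees $v$ is not isolated, $G-v$ is a proper subgraph of $G$, and the Ramsey-minimality of $G$ provides a $(K_t,C_\ell)$-free coloring $\phi$ of $G-v$. The strategy is to extend $\phi$ to all of $G$ by carefully coloring the edges incident to $v$ so that no monochromatic $K_t$ or $C_\ell$ arises, contradicting $G\to (K_t,C_\ell)$.

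Let $R$ denote the red subgraph of $\phi$ induced on $N(v)$; note that $R$ contains no $K_t$ since $\phi$ is $K_t$-free in red. I would split into two cases based on whether $R$ contains a $K_{t-1}$. If $R$ is $K_{t-1}$-free, then coloring every edge at $v$ red creates no red $K_t$ (such a clique through $v$ would force a red $K_{t-1}$ inside $R$) and introduces no blue edges, yielding a $(K_t,C_\ell)$-free extension.

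Otherwise $R$ contains a $K_{t-1}$ but no $K_t$, and since $|N(v)|=d_G(v)<2(t-1)$, the contrapositive of~\thref{lemma:lower_bound_completeVScycle} applies to $R$: the intersection of all $(t-1)$-cliques of $R$ must be non-empty. Pick any vertex $u^*$ in this intersection. I would then color $vu^*$ blue and every other edge $vu$ with $u\in N(v)\setminus\{u^*\}$ red. Any red $K_t$ through $v$ would yield a red $K_{t-1}$ in $R$ inside $N(v)\setminus\{u^*\}$, contradicting that every red $(t-1)$-clique of $R$ contains $u^*$; and any blue $C_\ell$ through $v$ would require two blue edges at $v$, whereas only $vu^*$ is blue. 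Hence the extension is again $(K_t,C_\ell)$-free.

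In either case we obtain a $(K_t,C_\ell)$-free coloring of $G$, contradicting $G\to(K_t,C_\ell)$. Therefore $\delta(G)\geq 2(t-1)$ for every Ramsey-minimal $G$, proving $s(K_t,C_\ell)\geq 2(t-1)$. The only nontrivial step is invoking \thref{lemma:lower_bound_completeVScycle} correctly to produce the common vertex $u^*$; everything else is a direct structural argument about extending a coloring across a single vertex.
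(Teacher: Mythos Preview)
Your proposal is correct and follows essentially the same argument as the paper: assume a vertex $v$ of degree less than $2(t-1)$, take a $(K_t,C_\ell)$-free coloring of $G-v$, split on whether the red graph on $N(v)$ contains a $K_{t-1}$, and in the nontrivial case invoke \thref{lemma:lower_bound_completeVScycle} to find a vertex $u^*$ common to all red $(t-1)$-cliques, coloring $vu^*$ blue and the rest red. The only superfluous remark is the appeal to the trivial lower bound to ensure $v$ is not isolated; this is not needed, since a Ramsey-minimal graph cannot have an isolated vertex anyway.
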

\begin{proof}
Suppose that $G$ is a Ramsey-minimal graph for $(K_t,C_\ell)$, and let $v$ be a vertex of degree at most $2(t-1)-1$ in $G$, i.e. $\abs{N(v)}<2(t-1)$. By the minimality of $G$, there exists a red/blue-coloring $\phi$ of the edges of $G-v$ with no red copy of $K_{t}$ and no blue copy of $C_\ell$. If $G[N(v)]$ contains no red copy of $K_{t-1}$, then we can extend the coloring $\phi$ to $G$ by coloring all edges incident to $v$ red to obtain a $(K_t,C_\ell)$-free coloring of $G$, a contradiction.

Therefore assume that we have at least one red copy of $K_{t-1}$ in $G[N(v)]$. By~\thref{lemma:lower_bound_completeVScycle}, because $G[N(v)]$ has no red copy of $K_t$ and $\abs{N(v)}<2(t-1)$, there exists at least one vertex $u$ in the intersection of all red copies of $K_{t-1}$ in $G[N(v)]$. Extend $\phi$ to $G$ by coloring the edge $uv$ blue and all other edges from $v$ to $N(v)\setminus\{u\}$ red. This coloring does not create a red copy of $K_t$ and the unique blue edge incident to $v$ cannot create a blue copy of $C_\ell$, again contradicting the fact that $G$ is Ramsey for $(K_t,C_\ell)$. 
\end{proof}

\section{Proof of Theorems~\ref{thm:sq_relation} and~\ref{thm:q1large}}\label{sec:proof_main_result}

As noted earlier, we defer the proofs of \thref{lem:cycle_and_clique_determiners,lem:cycle_and_clique_senders} to Section \ref{sec:existence_set_signal}. In this section, we assume their statements to be true, and use them to prove our main results, \thref{thm:sq_relation,thm:q1large}. Recall that $\cT=\cT(q_1,q_2,\ell, t)$ denotes the $q$-tuple of cycles and cliques as defined in \eqref{eq:definition_cT}, and that \CycleColors{} and \CliqueColors{} denote the cycle-colors $\{1,\ldots,q_1\}$ and clique-colors $\{q_1+1,\ldots,q\}$, respectively, while \AllColors{} denotes the full color palette $\{1,\ldots,q\}$. The idea is to express our function $s_q(\cT)$ in a different way, through a certain packing parameter. This idea was first formalized in~\cite{fox2016minimum} in their study of $s_q(K_t)$ in the multicolor setting, but, as the authors of~\cite{fox2016minimum} note, this idea can already be found implicitly in~\cite{burr_graphs_1976}.

\subsection{Packing parameters}\label{subsec:passing_to_packing}
In this section we generalize the packing parameter defined in~\cite{fox2016minimum}. A {\em color pattern} on vertex set $V$ is a collection of edge disjoint graphs $G_1,\ldots,G_m$ on the same vertex set $V$. A color pattern is $H$-free if every graph in it is $H$-free. 

\begin{definition}\thlabel{def:pq1q2t}
Given positive integers $t\geq 2$ and $q_1,q_2\geq 0$, let $P_{q_1,q_2}(t)$ be the smallest integer $n$ such that there exists a color pattern 
$G_{q_1+1},\ldots,G_{q_1+q_2}$ on vertex set $[n]$ such that  
\begin{enumerate}[label=(P\arabic*)]
    \item the graph $G_j$ is $K_{t+1}$-free for every $j\in \CliqueColors{}$, and \label{item:Packing_P1}
    \item for every vertex-coloring $\lambda:[n]\to\AllColors{}$, we have that $(a)$
    two distinct vertices $u$ and $w$ receive the same cycle-color; or $(b)$ 
    there exists a clique-color $j\in \CliqueColors{}$ such that $G_j$ contains a copy of $K_{t}$ on the vertices of color $j.$
    \label{item:Packing_P2}
\end{enumerate}
\end{definition}

For $q_1=0$, this parameter was introduced in~\cite{fox2016minimum}, and for all $q_2\geq 2$ and $t\geq 3$,Theorem 1.5 in~\cite{fox2016minimum} establishes that $s_{q_2}(K_t) = P_{0,q_2}(t-1)$. The following lemma generalizes this theorem and proves that $s_q(\cT)$ does not depend on $\ell$.

\begin{lemma}\thlabel{lem:packing_equivalence}
For all integers $\ell\geq 4$, $t \geq 3$, and $q_1, q_2\geq 0$, we have
\[ s_{q_1+q_2}(\cT(q_1,q_2,\ell,t)) = P_{q_1,q_2}(t-1).\]
\end{lemma}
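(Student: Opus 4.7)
The plan is to prove both inequalities $s_q(\cT)\geq P_{q_1,q_2}(t-1)$ and $s_q(\cT)\leq P_{q_1,q_2}(t-1)$ directly. The unifying idea is that a Ramsey-minimal graph $G$ with a distinguished minimum-degree vertex $v$ naturally encodes a color pattern on $N(v)$: a $\cT$-free coloring of $G-v$ plays the role of the $G_j$'s in the packing definition, while the coloring of edges incident to $v$ plays the role of the vertex-coloring $\lambda$. This extends the strategy of \cite{fox2016minimum} for $q_1=0$ to include cycles.

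\textbf{Lower bound.} Let $G$ be $q$-Ramsey-minimal for $\cT$ with $v$ of minimum degree $n$ and $N(v)=\{u_1,\ldots,u_n\}$. By minimality, $G-v$ admits a $\cT$-free coloring $\psi$. For each $j\in\CliqueColors$, set $G_j := \{\{i,k\} : \psi(u_iu_k)=j\}$; these are pairwise edge-disjoint, and any $K_t$ in $G_j$ would violate $\cT$-freeness, so (P1) holds. For (P2), given $\lambda:[n]\to\AllColors$, extend $\psi$ by coloring each $vu_i$ with $\lambda(u_i)$; since $G\to_q\cT$, the extension contains a monochromatic $H_i$ that must be incident to $v$. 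A monochromatic $C_\ell$ in cycle-color $c$ forces two edges $vu_i,vu_k$ to share color $c$ (case (a)); a monochromatic $K_t$ in clique-color $j$ exhibits $K_{t-1}\subseteq G_j$ on $\lambda$-color-$j$ vertices (case (b)). Hence $n\geq P_{q_1,q_2}(t-1)$.

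\textbf{Upper bound.} Set $n:=P_{q_1,q_2}(t-1)$ and fix an optimal pattern $G_{q_1+1},\ldots,G_q$. I build a graph $F$ with $d_F(v)=n$. Its skeleton consists of $v$ joined to $u_1,\ldots,u_n$; an edge $u_iu_k$ for each $\{i,k\}\in G_j$; and, for each cycle-color $c\in\CycleColors$ and each pair $\{i,k\}\subseteq[n]$, exactly one internally-vertex-disjoint path $P^c_{i,k}$ of length $\ell-2$ from $u_i$ to $u_k$. Edge colors are pinned using the safe gadgets of \thref{lem:cycle_and_clique_determiners,lem:cycle_and_clique_senders}: introduce reference edges $e_{q_1+1},\ldots,e_q$, attach one safe $\CliqueColors$-determiner and pairwise safe negative $\CliqueColors$-senders among them so that they take distinct clique-colors, then connect each main-body edge $u_iu_k\in G_j$ to $e_j$ by a safe positive $\CliqueColors$-sender; perform the symmetric construction with reference edges $f_1,\ldots,f_{q_1}$ and safe $\CycleColors$-gadgets, linking every edge of each $P^c_{i,k}$ to $f_c$. (Degenerate cases $q_1=1$ or $q_2=1$ use only determiners; $q_1=0$ or $q_2=0$ reduce to the known $s_{q_2}(K_t)$ and $s_{q_1}(C_\ell)$ theorems, respectively.) In any $\cT$-free coloring $\phi$ of $F$, the gadgets then force, up to permutations $\sigma$ of $\CliqueColors$ and $\tau$ of $\CycleColors$, that every $G_j$-edge has color $\sigma(j)$ and every edge of $P^c_{i,k}$ has color $\tau(c)$. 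Applying (P2) to the permuted pattern $G'_j:=G_{\sigma^{-1}(j)}$ with $\lambda(u_i):=\phi(vu_i)$ yields a monochromatic $C_\ell$ (closed through $P^{\tau^{-1}(c)}_{i,k}$) in case (a), and a monochromatic $K_t$ in clique-color $\sigma(\sigma^{-1}(j))=j$ in case (b). Conversely, coloring each $G_j$-edge $j$, each $P^c_{i,k}$-edge $c$, and each gadget with a safe $\cT$-free coloring, one checks $F-v$ is $\cT$-free: color $j$ restricts to $G_j$ (plus safe gadget content), which is $K_t$-free by (P1); color $c$ consists of the disjoint paths between the $u_i$'s, whose only cycles have length $m(\ell-2)$ with $m\geq 2$, never equal to $\ell$ for $\ell\geq 4$ given the one-path-per-pair convention. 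Hence $v$ lies in every Ramsey-minimal subgraph of $F$, giving $s_q(\cT)\leq d_F(v)=n$.

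\textbf{Main obstacle.} The delicate point is establishing $\cT$-freeness of the extremal coloring of $F-v$. The numerical fact that $m(\ell-2)=\ell$ forces $\ell\leq 4$, with the only concerning case being $\ell=4$, $m=2$, which would demand two length-$2$ paths between a single pair---precisely what the one-path-per-pair convention excludes---is what makes the construction succeed. A secondary subtlety is that our set-determiners pin down edge colors only up to the subsets $\CliqueColors$ and $\CycleColors$, not to specific colors; this forces the Ramsey argument to be run on a permuted pattern, which is legitimate only because (P1) and (P2) are invariant under permutations of $\CliqueColors$ and of $\CycleColors$.
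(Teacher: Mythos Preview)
Your proof is correct and follows essentially the same approach as the paper: the lower bound reads off a color pattern on $N(v)$ from a $\cT$-free coloring of $G-v$ and verifies (P1)--(P2), while the upper bound builds a graph around a hub vertex $v$ with the optimal color pattern among its neighbors, paths of length $\ell-2$ for the cycle-colors, and safe set-senders/determiners to pin edge colors. The paper handles the permutation issue by writing ``without loss of generality $\phi(e_i)=i$'' rather than tracking $\sigma,\tau$ explicitly, and it omits the cycle-length computation you give for the extremal coloring of $F-v$, but the substance is identical.
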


\begin{proof}
Set $q=q_1+q_2$ and $\cT = \cT(q_1,q_2,\ell,t)$. We divide the proof into two claims. 

\begin{clm}
$ s_q(\cT) \leq P_{q_1,q_2}(t-1).$
\end{clm}

\begin{proof}
As explained previously, in this proof we assume the existence of gadget graphs as guaranteed by \thref{lem:cycle_and_clique_determiners,lem:cycle_and_clique_senders}. Let $n=P_{q_1,q_2}(t-1)$ and $G_{q_1+1},\ldots, G_{q}$ be a color pattern on $[n]$ that satisfies~\ref{item:Packing_P1} and~\ref{item:Packing_P2}. For every pair of distinct vertices $u,w\in [n]$ and every cycle-color $i\in \CycleColors{}$, add a path $P_i(u,w)$ of length $\ell-2$ between $u$ and $w$ such that the internal vertices of these paths are pairwise disjoint.  Finally add a new vertex $v$, and connect it to each vertex in $[n]$. Call the resulting graph $H$.\smallskip

Assume first that $q_1,q_2>1$. Now, let $S^+_c$ and $S^-_c$ be a safe positive and negative \CycleColors-sender for $\cT$, respectively, and let $S^+_k$ and $S^-_k$ be a safe positive and negative \CliqueColors-sender for $\cT$; all of these gadgets exist by~\thref{lem:cycle_and_clique_senders}. Let $E=\{e_1,\ldots,e_{q}\}$ be a matching of size $q$. For each pair $i, j\in \CycleColors{}$ of distinct cycle-colors, join the edges $e_i$ and $e_j$ by a copy of $S^-_c$. Similarly, for each pair $i, j\in \CliqueColors{}$ of distinct clique-colors, join the edges $e_i$ and $e_j$ by a copy of $S^-_k$. For every clique-color $i\in\CliqueColors{}$ and every edge $f\in E(G_i)$, join the edges $e_i$ and $f$ by a copy of $S^+_k$. Then for each $i\in \CycleColors$ and for each edge $f\in P_i(u,w)$, join the edges $e_i$ and $f$ by a copy of $S^+_c$. Call the resulting graph $G$.

We will show that $G\to_q \cT$ but $G-v \nto_q \cT$. We begin with the latter. For this we define a $\cT$-free coloring. For all $i\in \CliqueColors$, give all edges of $G_i$ color $i$. For all $i\in \CycleColors$ and every pair of distinct vertices $u,w\in [n]$, color the edges of $P_i(u,w)$ with color $i$. Finally, for all $i\in [q]$, give $e_i$ color $i$. This coloring can now be extended to the set-senders so that each set-sender receives a safe $\cT$-free coloring. Suppose there exists a monochromatic cycle in a cycle-color or clique in a clique-color. By the safeness of the coloring of each set-sender, we know that such a monochromatic subgraph has to be contained in $H-v$. But $H-v$ contains no monochromatic copy $K_t$ in a clique-color by property~\ref{item:Packing_P1} of the color pattern. By construction, it is not difficult to see that it also contains no monochromatic copy of $C_\ell$ in a cycle-color. Hence, this is a $\cT$-free coloring of $G-v$, as claimed.\smallskip

We now prove that $G\to_q \cT$. For the sake of contradiction, let $\phi:E(G)\to\AllColors{}$ be a $\cT$-free $q$-coloring of the edges of $G$. In any such coloring, property~\ref{axiom:set_sender_colored_edges} of the copies of $S^-_c$ and $S^-_k$ ensures that $\set*{\phi(e_1),\ldots,\phi(e_{q_1})} = \CycleColors$, while $\set*{c(e_{q_1+1}),\ldots,c(e_{q})} = \CliqueColors{}$. Without loss of generality, we may assume that for any $i\in\AllColors{}$, we have $\phi(e_i)=i$. Property~\ref{axiom:set_sender_colored_edges} of the copies of $S^+_k$ and $S^+_c$ further ensures that, for any $i\in\CliqueColors{}$, each edge in $G_i$ has color $i$, and for each pair of vertices $u,w\in[n]$ and each $j\in\CycleColors{}$, the edges of $P_j(u,v)$ receive color $j$.

Consider now the edges from $v$ to $N(v)=[n]$. These induce a natural vertex-coloring $\lambda:[n]\to\AllColors{}$ defined by $\lambda(u)=\phi(vu)$ for each $u\in[n]$. Then by property~\ref{item:Packing_P2}, it follows that either there are two distinct vertices $u,w\in[n]$ such that $\lambda(u) = \lambda(w) = j$ for some $j\in\CycleColors{}$, or there exists a clique-color $j\in \CliqueColors{}$ such that $G_j[\lambda^{-1}(\set{j})]$ contains a copy of $K_{t-1}$. In the former case $P_j(u,w)$  forms a monochromatic copy of $C_{\ell}$ in color $j$ together with $v$. In the latter case, the copy of $K_{t-1}$ forms a monochromatic copy of $K_t$ in color $j$ together with $v$. 

It follows that $G$ is $q$-Ramsey for $\cT$, while $G-v$ is not. So any $q$-Ramsey-minimal subgraph of $G$ must contain the vertex $v$, and therefore  $s_q(\cT) \leq  d_{G}(v) = n = P_{q_1,q_2}(t-1)$.\smallskip

If $q_1=1$ and/or $q_2=1$, we use a safe \CycleColors-determiner $D_c$ instead of \CycleColors-senders, and/or a safe \CliqueColors-determiner $D_k$ instead of \CliqueColors-senders. These gadgets exist by~\thref{lem:cycle_and_clique_determiners}. If $q_1=1$, for each $i\in \CycleColors$ and for each edge $f\in P_i(u,w)$, we attach a copy of $D_c$ to $f$. If $q_2=1$, for each $i\in \CliqueColors$ and every edge $f\in E(G_i)$, we attach a a copy of $D_k$ to $f$. The rest of the proof is identical to the case $q_1,q_2>1$, using corresponding properties of set-determiners.\end{proof}

\begin{clm}
$s_q(\cT) \geq P_{q_1,q_2}(t-1).$
\end{clm}

\begin{proof}
Towards a contradiction, assume that there exists a graph $G$ with a vertex $v$ of degree $n < P_{q_1,q_2}(t-1)$, such that $G$ is $q$-Ramsey-minimal for $\cT$. By minimality, there exists a $\cT$-free $q$-coloring $\phi$ of the edges of $G-v$. This coloring induces a color pattern $G_{q_1+1},\ldots, G_{q}$ on $N(v)$, corresponding to the colors $q_1+1, \dots, q$ respectively, such that every $G_j$ is $K_t$-free. Since $\size{N(v)}<P_{q_1,q_2}(t-1)$ and each $G_j$ is $K_t$-free, by property~\ref{item:Packing_P2} there must exist a vertex-coloring  $\lambda:N(v)\to\AllColors{}$ such that no two vertices in $N(v)$ receive the same cycle-color and there is no clique-color $j$ such that $G_j[\lambda^{-1}(\set{j})]$ contains a copy of $K_{t-1}$. Now, we extend $\phi$ to all of $G$ by setting $\phi(uv)=\lambda(u)$ for each $u\in N(v)$.

By the properties of $\lambda$, this extended coloring has no monochromatic copy of $C_{\ell}$ in any color $j\in \CycleColors{}$ and no monochromatic copy of $K_t$ in any color $j\in \CliqueColors{}$, contradicting the fact that $G$ is $q$-Ramsey for $\cT$. 
\end{proof}
\let\qed\relax
\end{proof}

\subsection{Proof of Theorem~\ref{thm:sq_relation}}\label{subsec:proof_sq_relation}
We are now ready to prove our first main result in the multicolor setting. We begin with the lower bound. 

\begin{lemma}\thlabel{clm:new_lower_bound}
    For all $q_1,q_2\geq 1$, $t\geq 3$, and $\ell\geq 4$, we have
    \begin{align}\label{eq:lowerbound}
        s_{q_1+q_2}(\cT(q_1,q_2,\ell, t))\geq s_{q_2}(K_t)+s_{q_1}(C_\ell)-1 = s_{q_2}(K_t)+q_1.
    \end{align}
\end{lemma}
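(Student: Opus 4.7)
The approach is to translate the statement into the packing-parameter language of \thref{lem:packing_equivalence}. That lemma gives $s_{q_1+q_2}(\cT(q_1,q_2,\ell,t)) = P_{q_1,q_2}(t-1)$ and, in the special case $q_1 = 0$, $s_{q_2}(K_t) = P_{0,q_2}(t-1)$, so the inequality to prove becomes
\[ P_{q_1,q_2}(t-1) \geq P_{0,q_2}(t-1) + q_1. \]
The stated equality $s_{q_2}(K_t) + s_{q_1}(C_\ell) - 1 = s_{q_2}(K_t) + q_1$ is just the identity $s_{q_1}(C_\ell) = q_1 + 1$: for $q_1 \geq 2$ this is the Boyadzhiyska--Clemens--Gupta theorem cited in the introduction, while for $q_1 = 1$ the cycle $C_\ell$ is itself $1$-Ramsey-minimal for $C_\ell$ and has minimum degree $2$.

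For the packing inequality, my plan is the following restriction argument. Let $n = P_{q_1,q_2}(t-1)$ and fix a color pattern $(G_{q_1+1}, \dots, G_q)$ on $[n]$ witnessing this value. I want to produce, from this pattern, a smaller color pattern on $n - q_1$ vertices showing $P_{0,q_2}(t-1) \leq n - q_1$. I first observe that $n \geq q_1$: otherwise an injective vertex-coloring $\lambda : [n] \to \CycleColors{}$ would make (P2a) fail (no cycle-color is repeated) and also (P2b) fail (no vertex receives a clique-color), contradicting (P2) for the original pattern. So I may pick any $R \subseteq [n]$ with $|R| = q_1$, set $S = [n] \setminus R$ (hence $|S| = n - q_1$), and form the restricted pattern $(G_{q_1+1}[S], \dots, G_q[S])$; property (P1) is clearly inherited.

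The crux is then to verify the $(0,q_2)$-version of (P2) for this restriction, namely that every $\mu : S \to \CliqueColors{}$ produces a monochromatic $K_{t-1}$ inside some $G_j[\mu^{-1}(j)]$. Given such a $\mu$, I will extend it to $\lambda : [n] \to \AllColors{}$ by assigning the $q_1$ cycle-colors bijectively to the $q_1$ vertices of $R$. Since each cycle-color is then used on exactly one vertex, (P2a) fails for $\lambda$, so (P2) applied to the original pattern forces (P2b): some clique-color $j$ admits a $K_{t-1}$ in $G_j$ on $\lambda^{-1}(j)$. But $\lambda^{-1}(j) = \mu^{-1}(j) \subseteq S$ for every $j \in \CliqueColors{}$, so this $K_{t-1}$ actually sits inside $G_j[\mu^{-1}(j)]$, which is exactly the required condition. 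This gives $P_{0,q_2}(t-1) \leq n - q_1$, and rearranging yields the inequality. I do not expect any serious obstacle here once \thref{lem:packing_equivalence} is available; the whole argument merely formalizes the intuition that the $q_1$ cycle-colors can be spent on at most one neighbor of the minimum-degree vertex each, so any extremal construction for the pure-clique problem can be padded by $q_1$ extra vertices (one per cycle-color) to obtain a valid $q$-color construction.
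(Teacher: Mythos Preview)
Your proof is correct, but it takes a genuinely different route from the paper. The paper argues directly at the level of Ramsey-minimal graphs: assuming a $q$-Ramsey-minimal graph $G$ for $\cT$ has a vertex $v$ of degree at most $s_{q_2}(K_t)+s_{q_1}(C_\ell)-2$, it takes a $\cT$-free coloring $\phi$ of $G-v$, lets $G'$ consist of the clique-colored edges of $G-v$ together with $s_{q_2}(K_t)-1$ edges at $v$, observes that $G'$ cannot be $q_2$-Ramsey for $K_t$ (since $G'-v$ is not and $\deg_{G'}(v)<s_{q_2}(K_t)$), and similarly for $G-G'$ with the cycle-colors; combining the two recolorings yields a $\cT$-free coloring of $G$, a contradiction.

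Your approach instead passes through \thref{lem:packing_equivalence} and proves the packing inequality $P_{q_1,q_2}(t-1)\geq P_{0,q_2}(t-1)+q_1$ by deleting $q_1$ vertices from an optimal $(q_1,q_2)$-pattern and extending any clique-coloring of the remainder by a bijective assignment of cycle-colors to the deleted vertices. This is clean and in the same spirit as the paper's proof of the \emph{upper} bound in \thref{thm:sq_relation}, which also goes through the packing parameter. The trade-off is that your argument inherits the dependence of \thref{lem:packing_equivalence} on the gadget constructions of \thref{lem:cycle_and_clique_determiners,lem:cycle_and_clique_senders} (specifically, you need the direction $s_{q_2}(K_t)\leq P_{0,q_2}(t-1)$), whereas the paper's direct argument is entirely self-contained and uses nothing beyond the definition of $s_q$. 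One small remark: your preliminary observation actually yields $n>q_1$ rather than $n\geq q_1$ (the injective coloring into $\CycleColors$ exists whenever $n\leq q_1$), which is what you implicitly need for the restricted pattern on $S$ to be nonvacuous.
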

\begin{proof}
    Set $q=q_1+q_2$ and $\cT = \cT(q_1,q_2,\ell,t)$, and suppose that $G$ is a $q$-Ramsey-minimal graph for $\cT$ containing a vertex $v$ of degree at most $s_{q_2}(K_t)+s_{q_1}(C_\ell)-2$. Let $\phi:E(G-v) \rightarrow [q]$ be a $\cT$-free $q$-coloring of $G-v$. Let $G'$ be the subgraph of $G$ containing all edges of $G-v$ with colors $q_1+1,\dots, q$ and any set of $\min\set{s_{q_2}(K_t)-1,\ \deg_G(v)}$ edges of $G$ incident to $v$. We know that $G'-v$ is not $q_2$-Ramsey for $K_t$, and since $\deg_{G'}(v) < s_{q_2}(K_t)$, it follows that $G'$ itself cannot be $q_2$-Ramsey for $K_t$. Thus, we can recolor the edges of $G'$ using colors $q_1+1,\dots, q$ so that there is no monochromatic copy of $K_t$ inside. Now, we can apply the same argument to $G-G'$ to obtain a $C_\ell$-free coloring of it with the colors $1,\dots, q_1$. These two colorings together yield a $\mathcal{T}$-free coloring of $G$, a contradiction. The last equality follows from the fact $s_q(C_\ell)=q+1$ \cite{boyadzhiyska2020minimal}.
\end{proof}

From the proof of this lower bound it becomes clear that this is actually a generalization of the trivial lower bound given in~\eqref{eq:trivial_bounds_sq}. We now proceed with the upper bound. For this we take a slightly indirect approach: instead of working directly with the parameter $s_q$, we show a relation between the two packing parameters.

\begin{lemma}
For all $q_1,q_2\geq 1$, $t\geq 3$, and $\ell\geq 4$, we have
    \begin{align}\label{eq:upperbound}
        s_{q_1+q_2}(\cT(q_1,q_2,\ell, t)) = P_{q_1, q_2} (t-1) \leq P_{0,q_1+q_2}(t-1) = s_{q_1+q_2}(K_t).
    \end{align}
\end{lemma}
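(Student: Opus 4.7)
The plan is to observe that the two equalities in the chain come essentially for free, so all the work concentrates on the middle inequality $P_{q_1,q_2}(t-1) \leq P_{0,q_1+q_2}(t-1)$. The leftmost equality is precisely \thref{lem:packing_equivalence}, while the rightmost equality is its special case with $q_1 = 0$ (and $q_2$ replaced by $q_1+q_2$); this is also exactly Theorem~1.5 of~\cite{fox2016minimum} as cited immediately after \thref{def:pq1q2t}.

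For the middle inequality I would set $n := P_{0,q_1+q_2}(t-1)$ and take an optimal color pattern $G_1,\ldots,G_{q_1+q_2}$ on $[n]$ witnessing this value: each $G_j$ is $K_t$-free, and for every vertex-coloring $\lambda:[n]\to[q_1+q_2]$ some color $j$ admits a copy of $K_{t-1}$ in $G_j$ on the vertices colored $j$. My candidate witness for $P_{q_1,q_2}(t-1) \leq n$ is the restricted sub-pattern $G_{q_1+1},\ldots,G_{q_1+q_2}$ on $[n]$, i.e.\ simply discard the first $q_1$ graphs and keep the indexing. Property \ref{item:Packing_P1} is then inherited verbatim, since each $G_j$ with $j>q_1$ is already $K_t$-free.

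The crux will be verifying \ref{item:Packing_P2} for the sub-pattern. Given any $\lambda:[n]\to[q_1+q_2]$ for which condition (a) fails — so every cycle-color in $\{1,\ldots,q_1\}$ appears on at most one vertex — I would apply the defining property of the original optimal pattern to this same $\lambda$ and obtain a color $j \in [q_1+q_2]$ together with a copy of $K_{t-1}$ in $G_j$ on vertices colored $j$. Since $t \geq 3$, such a $K_{t-1}$ requires at least two vertices of color $j$; the failure of (a) then forces $j \in \{q_1+1,\ldots,q_1+q_2\}$, which means condition (b) holds for the sub-pattern, completing the argument.

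No real obstacle is expected here: the entire content is the simple observation that a monochromatic $K_{t-1}$ carries at least two vertices of a single color, and so cannot ``hide'' in a cycle-color slot unless condition (a) is already satisfied. The only care required is lining up the indexing so that the restricted sub-pattern's colors $\{q_1+1,\ldots,q_1+q_2\}$ match the clique-color block in the definition of $P_{q_1,q_2}$, which is automatic.
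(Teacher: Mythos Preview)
Your proposal is correct and follows essentially the same approach as the paper: both discard the first $q_1$ graphs from an optimal pattern for $P_{0,q_1+q_2}(t-1)$ and observe that a monochromatic $K_{t-1}$ needs at least $t-1\geq 2$ vertices of one color, forcing that color to be a clique-color whenever condition~(a) fails. The paper phrases the case split as ``if $j>q_1$ then (b), else (a)'' rather than your contrapositive, but the argument is identical.
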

\begin{proof}
    Again set $q=q_1+q_2$ and let $n = P_{0,q}(t-1)$. Let $G_1,\dots, G_q$ be a color pattern on $[n]$, as guaranteed by~\thref{def:pq1q2t} of $P_{0,q}(t-1)$. Consider only the last $q_2$ graphs; we claim that this color pattern satisfies properties~\ref{item:Packing_P1} and~\ref{item:Packing_P2} from~\thref{def:pq1q2t} of $P_{q_1, q_2}(t-1)$. The first property is clear. Now let $\lambda:[n]\to\AllColors$ be any coloring. Then we know that there is some $j\in \AllColors$ such that $G_j$ contains a monochromatic copy of $K_{t-1}$ on the vertices of color $j$. Now, if $j>q_1$, then case $(b)$ from property~\ref{item:Packing_P2} occurs. Otherwise, we have $j\leq q_1$, and thus there must be at least $t-1\geq 2$ vertices of color $j$, implying that case $(a)$ from property~\ref{item:Packing_P2} happens. Hence $P_{q_1, q_2} (t-1) \leq P_{0,q}(t-1)$, and the two equalities follow from \thref{lem:packing_equivalence} and the discussion that preceeds it.
\end{proof}

\subsection{Proof of Theorem~\ref{thm:q1large}}\label{subsec:proof_q1_large}

We now prove our second main result for multiple colors. In~\cite[Lemmas 4.2 and 4.4]{fox2016minimum}, it was shown that, for all $q \geq 2$ and $t \geq 3$, there exists a color pattern $G_1,\ldots,G_q$ on the vertex set $[n]$, for some $n$, such that
\begin{enumerate}[label=(\roman*)]
    \item $G_i$ is $K_t$-free for every $i \in [n]$, and
    \item any subset of $[n]$ of size $n/q$ contains a copy of $K_{t-1}$ in each color.
\end{enumerate}
The results in~\cite{fox2016minimum} include bounds on $n$ in terms of $q$, which are unnecessary for our purpose. \thref{thm:q1large} follows from the next lemma by taking $\eps \to 0$. 

\begin{lemma}
Given $0< \varepsilon < 1$ and integers $q_2\geq 1$ and $t\geq 3$, there exists an integer $q_0\geq 1$ such that, for all $q_1\geq q_0$, we have
\[P_{q_1,q_2}(t-1)\leq (1+\varepsilon)q_1.\]
\end{lemma}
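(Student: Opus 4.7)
The plan is to build the desired color pattern as a blow-up of the \FGLPS{} construction applied with a larger number of colors $q'>q_2$, and then to discard all but $q_2$ of the resulting graphs. The point is that the \FGLPS{} construction with $q'$ colors on $N_0=N_0(q',t)$ vertices guarantees a $K_{t-1}$ in every $(N_0/q')$-subset in each color; blowing this up preserves the same fractional threshold, and choosing $q'$ slightly larger than $q_2/\varepsilon$ will make this threshold small enough to be forced by the pigeonhole bound on clique-colored vertices.

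Concretely, I would fix $q'=\lceil 2(1+\varepsilon)q_2/\varepsilon\rceil$ and let $N_0=N_0(q',t)$ be the vertex-set size supplied by the cited result, giving edge-disjoint $K_t$-free graphs $G'_1,\ldots,G'_{q'}$ on $[N_0]$ such that every $(N_0/q')$-subset contains a $K_{t-1}$ in each color. Setting $q_0=\lceil 2N_0/\varepsilon\rceil$, for any $q_1\geq q_0$ I would take $m=\lfloor(1+\varepsilon)q_1/N_0\rfloor$ and $n=mN_0$, so that $n\leq(1+\varepsilon)q_1$ while at the same time $n>(1+\varepsilon)q_1-N_0\geq(1+\varepsilon/2)q_1$ by the choice of $q_0$. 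I then replace each $v\in[N_0]$ by a block $B_v$ of $m$ vertices in $[n]$ and each edge of $G'_j$ by the complete bipartite graph between the two corresponding blocks, and take the first $q_2$ of the resulting graphs as the candidate color pattern $G_{q_1+1},\ldots,G_{q_1+q_2}$. Since any $K_t$ in a blow-up uses $t$ distinct blocks forming a $K_t$ in the base, both edge-disjointness and $K_t$-freeness are inherited, so~\ref{item:Packing_P1} holds.

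To verify~\ref{item:Packing_P2}, I would take any vertex-coloring $\lambda\colon[n]\to[q_1+q_2]$ and assume case~(a) fails, so that at most $q_1$ vertices receive cycle-colors and at least $n-q_1$ receive clique-colors. By pigeonhole some clique-color $j^\ast$ is used on at least $(n-q_1)/q_2$ vertices, which lie in at least $(n-q_1)/(mq_2)$ distinct blocks. The heart of the argument is that this number of blocks is at least $N_0/q'$; after clearing denominators this reduces to $n/q_1\geq q'/(q'-q_2)$, which in turn follows from $n/q_1>1+\varepsilon/2$ together with $q'/(q'-q_2)=1/(1-q_2/q')\leq 2(1+\varepsilon)/(2+\varepsilon)<1+\varepsilon/2$ by the choice of $q'$. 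The cited \FGLPS{} property then yields a $K_{t-1}$ on those blocks in $G'_{j^\ast-q_1}$, and picking one $j^\ast$-colored vertex per block lifts this to a $K_{t-1}$ in $G_{j^\ast}$ on vertices colored $j^\ast$, as required for case~(b).

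The only real obstacle is the parameter balancing: $q'$ must be large (of order $q_2/\varepsilon$) to shrink the blow-up threshold $n/q'$ below the pigeonhole bound $(n-q_1)/q_2$, yet it must remain a constant depending only on $q_2,t,\varepsilon$ so that $N_0(q',t)$ is absorbed by the slack in $n=mN_0\approx(1+\varepsilon)q_1$ once $q_1$ is large enough. Once the right $q'$ is pinned down, the rest is a short pigeonhole plus blow-up computation.
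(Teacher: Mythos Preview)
Your argument is correct and follows the same core strategy as the paper: invoke the \FGLPS{} construction with a surplus of colors $q'>q_2$, keep only $q_2$ of the graphs, and use pigeonhole to force case~(b) of~\ref{item:Packing_P2} whenever case~(a) fails. The numerical verification that $(n-q_1)/q_2\geq n/q'$ under your choice of $q'$ and $q_0$ is fine.

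The one genuine difference is your blow-up step. The paper applies the \FGLPS{} result directly on the target vertex set of size $n\in[(1+\eps/2)q_1,(1+\eps)q_1]$, choosing $q^\ast$ appropriately; this implicitly uses that the \FGLPS{} construction is available for $n$ in a range once $q^\ast$ is large. You instead fix a single \FGLPS{} instance on $N_0=N_0(q',t)$ vertices with $q'$ depending only on $q_2,t,\eps$, and then blow it up by a factor of~$m$ to reach the desired size. This buys you a fully self-contained argument that only needs the existence of \emph{one} $N_0$ per choice of $(q',t)$, at the cost of the extra (routine) verification that $K_t$-freeness, edge-disjointness, and the $K_{t-1}$-richness threshold all survive the blow-up. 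The paper's route is shorter but leans more heavily on what is proved in~\cite{fox2016minimum}; yours is a legitimate and slightly more explicit alternative.
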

\begin{proof}
Let $0< \varepsilon < 1$, $q_2\geq 1$, and $t\geq 3$ be fixed. For $q_1$ large enough, there exists a color pattern $G_1,\dots, G_{q^\ast}$ on $n \in [(1+\eps/2)q_1, (1+\eps)q_1]$ vertices, given by the result in~\cite{fox2016minimum}, with $q^\ast$ large enough compared to $q_2$.

Keeping only the first $q_2$ graphs in the color pattern, which we denote for convenience by $G_{q_1+1}, \dots, G_{q_1+q_2}$, we claim that they satisfy properties~\ref{item:Packing_P1} and~\ref{item:Packing_P2}. The first one is clear. For the second one, consider a vertex coloring $\lambda: [n] \to[q]$, where $q=q_1+q_2$. Let $\mathcal{C}$ be its largest color class in $\CliqueColors$, with color $c$. If $(a)$ does not hold, by the pigeonhole principle the color class $\mathcal{C}$ has size at least $\frac{n-q_1}{q_2}$. Since $q^\ast$ is large enough compared to $q_2$, and by choice of $n$, we have $\frac{n-q_1}{q_2} \geq \frac{n}{q^\ast}$. By property (ii) above, we know that there exists a copy of $K_{t-1}$ in $G_{c}[\mathcal{C}]$. Therefore if $(a)$ of~\ref{item:Packing_P2} does not hold then $(b)$ does, and $P_{q_1,q_2}(t-1) \leq n\leq (1+\eps) q_1$.
\end{proof}


\section{Existence of set-determiners and set-senders}\label{sec:existence_set_signal}

In this section we construct set-determiners and set-senders for tuples of the form {$(C_\ell,\dots, C_\ell, K_t, \allowbreak \dots, K_t)$}, that is, we prove~\thref{lem:cycle_and_clique_determiners,lem:cycle_and_clique_senders}. Our set-senders will be constructed in several stages. Before diving into the proofs, we give a brief overview.

Throughout the rest of the section, assume that $\ell\geq 4$, $t\geq 3$, and $q, q_1,q_2\geq 1$ are fixed integers such that $q_1+q_2=q$ and recall that $\cT=\cT(q_1,q_2,\ell,t)$ denotes the $q$-tuple of cycles and cliques as defined in \eqref{eq:definition_cT}. First, we construct a graph $\Gamma$ that is $q$-Ramsey for the tuple $\cT$ and has certain special properties; for this, we generalize the ideas of Bollob{\'a}s, Donadelli, Kohayakawa, and Schelp~\cite{bollobas2001ramsey} used to construct $2$-Ramsey graphs for certain pairs of graphs, including $(C_\ell, K_t)$, to multiple colors. This graph $\Gamma$ is built by sampling a random hypergraph, applying alterations to remove all short cycles from it, and then replacing every hyperedge by a large (depending only on $t$) clique. In order to prove the claimed properties of $\Gamma$, we use a number of results, all of which are fairly standard by now. Second, we modify $\Gamma$ slightly and construct set-determiners for each of the color palettes $\cS(C_\ell)$ and $\cS(K_t)$. This is a generalization of a construction given by Siggers in~\cite{siggers_non-bipartite_2014}, valid for certain pairs of the form $(C_\ell, H)$. Finally, since we need finer control over the color patterns we force on given set of edges when $q_1>1$ or $q_2>1$, we build set-senders from our set-determiners. This final step is the main novelty in this section. 

\subsection{Preliminary results}
We begin by collecting the different results that will be needed for the construction and proof of the claimed properties of the graph $\Gamma$. 

\medskip 
\noindent\textbf{Hypergraphs with few short cycles.} 
First, we need to construct a uniform hypergraph with no short cycles that is nevertheless not too sparse. This is done using a standard construction due to Erd\H{o}s and Hajnal~\cite{erdos_chromatic_1966}, starting from a random hypergraph. We state the necessary results about random hypergraphs without proof, as these are by now standard applications of the probabilistic method. A \emph{cycle of length $s$} in a hypergraph $\mathcal{H}$ is a sequence $e_1,v_1,e_2,v_2\dots,e_s,v_s$ of distinct hyperedges and vertices of $\mathcal{H}$ such that $v_i\in e_i\cap e_{i+1}$ for all $1\leq i< s$ and $v_s \in e_s\cap e_1$. Note in particular that two edges intersecting in more than one vertex form a cycle of length two in $\mathcal{H}$. The \emph{girth} of a hypergraph $\mathcal{H}$ is the length of the shortest cycle in $\mathcal{H}$ (if no cycle exists, then by convention we say that the girth of $\mathcal{H}$ is infinity).

\begin{lemma}\thlabel{lem:randomhypergraph}
Let $\ell, h\geq 2$ be fixed integers and $p_h = A n^{-(h-1)+1/(\ell-1)}$, where $A$ is a constant. For an integer $n\geq 1$, let $\mathcal{H}_{n,p_h}$ be a random $h$-uniform hypergraph on $[n]$ in which each $h$-subset of $[n]$ is added as an edge with probability $p_h$, independently of all other $h$-subsets. Then, as $n\to\infty$, the following hold with high probability:
\begin{enumerate}[label=(\roman*)]
    \item $e(\mathcal{H}_{n,p_h}) = (1+o(1))\binom{n}{h}p_h$.\label{lem:randomhypergraph:edges}
    \item The number of cycles in $\mathcal{H}_{n,p_h}$ of length less than $\ell$ is $o(e(\mathcal{H}_{n,p_h}))$.\label{lem:randomhypergraph:cycles}
\end{enumerate}
\end{lemma}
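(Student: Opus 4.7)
The plan is a standard two-pronged application of the probabilistic method: a Chebyshev-type concentration argument for (i) and a first-moment/Markov argument for (ii). The choice $p_h = An^{-(h-1)+1/(\ell-1)}$ is exactly the Erd\H{o}s--Hajnal threshold that makes the expected number of short hyperedge cycles grow strictly slower than the expected number of edges, which is what one needs in order to later delete one edge from each short cycle without destroying the linear order of magnitude of the edge count.

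For part (i), note that $e(\mathcal{H}_{n,p_h})$ is the sum of $\binom{n}{h}$ independent Bernoulli$(p_h)$ indicators, so it has the distribution $\mathrm{Bin}(\binom{n}{h}, p_h)$ with mean $\mu := \binom{n}{h}p_h = \Theta(n^{1+1/(\ell-1)})$, which tends to infinity. Its variance is at most $\mu$, so Chebyshev's inequality gives $\Pr[\,|e(\mathcal{H}_{n,p_h}) - \mu| \geq \eps_n\mu\,] \leq 1/(\eps_n^2\mu)$; taking $\eps_n\to 0$ sufficiently slowly (say $\eps_n = \mu^{-1/3}$) yields the desired $(1+o(1))\mu$ estimate with high probability.

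For part (ii), I would estimate the expected number of length-$s$ cycles separately for each $2 \leq s \leq \ell - 1$. A cycle of length $s$ is determined by an ordered tuple of $s$ distinct distinguished vertices $v_1,\dots,v_s$ together with a choice, for each of the $s$ hyperedges $e_i$, of the $h-2$ additional vertices of $e_i$ other than $v_{i-1},v_i$ (indices mod $s$). Bounding each choice crudely by $n$, the total number of potential configurations is at most $n^s \cdot n^{s(h-2)} = n^{s(h-1)}$, and each such configuration appears as a cycle with probability $p_h^s$. Substituting $p_h$, the expected number of cycles of length $s$ is $O(A^s n^{s/(\ell-1)})$, which for $s \leq \ell-1$ is $O(n)$. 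Summing over the finitely many values $2 \leq s < \ell$, the expected total number of short cycles is still $O(n) = o(\mu)$. A Markov bound (again with a slowly growing slack) upgrades this to a high-probability statement.

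The only subtlety to watch for is the definition of a hypergraph cycle, which requires only the $s$ listed edges and the $s$ distinguished vertices to be pairwise distinct; non-consecutive edges may accidentally share further vertices. Since I am only after an \emph{upper} bound on the expected count, choosing each edge's $h-2$ extra vertices freely from $[n]$ overcounts only in our favour, so no finer accounting is required. Once the exponent identity $s(h-1) + s\bigl(-(h-1)+1/(\ell-1)\bigr) = s/(\ell-1) \leq 1$ is verified, the whole argument slots into the standard Erd\H{o}s--Hajnal alteration scheme and there is no real obstacle.
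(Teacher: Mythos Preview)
Your proposal is correct and follows essentially the same approach the paper sketches: the paper simply cites a Chernoff bound for (i) and a first-moment argument for (ii), and your argument fills in exactly those details. The only cosmetic difference is that you use Chebyshev rather than Chernoff for (i), which is perfectly adequate here since $\mu\to\infty$ and only a $(1+o(1))$ estimate is needed.
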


Part~\ref{lem:randomhypergraph:edges} follows from an application of the Chernoff bound (see for example~\cite[Theorem~2.1]{mcdiarmid1998concentration}), while part~\ref{lem:randomhypergraph:cycles} is shown using a first-moment argument. 

\medskip
\noindent\textbf{Quantitative version of Ramsey's theorem.}
The following lemma is a simple consequence of Ramsey's theorem and is obtained by a straightforward averaging argument. Informally, it says that, for any $r$-tuple of graphs $(H_1,\dots, H_r)$, if we $r$-color a sufficiently large complete graph, then we can find not just one monochromatic $H_i$ in the correct color, but many of them. The proof is a simple generalization of the one given, for example, in~\cite[Theorem 2]{nenadov_short_2016}.

\begin{lemma}[Quantitative version of Ramsey's theorem]\thlabel{lem:Ramsey}
Let $r\geq 1$ and $H_1,\dots, H_r$ be graphs. Then there exist a real number $c = c(H_1,\dots, H_r) > 0$  and an integer $k_0 = k_0 (H_1,\dots, H_r) \geq 1$ such that, if $k\geq k_0$ and the edges of $K_k$ are colored with $r$ colors, then there exists an $i\in[r]$ such that there are at least $c k^{v(H_i)}$ monochromatic copies of $H_i$ in color $i$.
\end{lemma}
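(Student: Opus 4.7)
My plan is to deduce this quantitative version from Ramsey's theorem itself via a standard double-counting / averaging argument, in the same spirit as the $2$-color case of~\cite{nenadov_short_2016}. The rough idea is that every bounded-size sub-complete-graph of $K_k$ already contains a monochromatic $H_i$, and overlapping these witnesses gives a polynomial-in-$k$ count.

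First, I would invoke the classical Ramsey theorem to obtain an integer $R = R(H_1,\dots,H_r)$ such that every $r$-edge-coloring of $K_R$ contains, for some $i \in [r]$, a monochromatic copy of $H_i$ in color $i$. I will take $k_0 = R$. Given any $r$-coloring of $K_k$ with $k \geq R$, I would then consider all $\binom{k}{R}$ $R$-subsets $U$ of $V(K_k)$. By the choice of $R$, each induced coloring on $U$ supports a monochromatic $H_{i(U)}$ for some $i(U) \in [r]$. Pigeonhole on the map $U \mapsto i(U)$ then produces a single color $i^\star \in [r]$ for which at least $\binom{k}{R}/r$ of the $R$-subsets contain a monochromatic copy of $H_{i^\star}$ in color $i^\star$.

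To translate this into a lower bound on the number of monochromatic copies of $H_{i^\star}$, I would use that any fixed copy of $H_{i^\star}$ in $K_k$ lies in exactly $\binom{k - v(H_{i^\star})}{R - v(H_{i^\star})}$ of the $R$-subsets. Dividing the two quantities yields at least
\[
\frac{1}{r}\cdot\frac{\binom{k}{R}}{\binom{k-v(H_{i^\star})}{R-v(H_{i^\star})}}
= \frac{1}{r}\cdot \frac{k(k-1)\cdots(k-v(H_{i^\star})+1)}{R(R-1)\cdots(R-v(H_{i^\star})+1)}
\]
monochromatic copies of $H_{i^\star}$ in color $i^\star$. For $k$ large enough in terms of $R$ and $\max_i v(H_i)$ this quantity is at least $c\, k^{v(H_{i^\star})}$, where one can take $c = c(H_1,\dots,H_r) > 0$ to be the minimum over $i \in [r]$ of the corresponding explicit constants $\frac{1}{2r\cdot R(R-1)\cdots(R-v(H_i)+1)}$, say.

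I do not foresee a genuine obstacle here: the only mild subtlety is that the color $i^\star$ is produced by the averaging step rather than chosen in advance, but this matches the existential quantifier on $i$ in the statement exactly. Everything else reduces to Ramsey's theorem plus a routine binomial comparison, and the constants $c$ and $k_0$ depend only on the graphs $H_1,\dots,H_r$ through the Ramsey number $R$ and the orders $v(H_i)$, as required.
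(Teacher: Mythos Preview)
Your proposal is correct and is exactly the approach the paper has in mind: the paper does not spell out a proof but simply remarks that it is ``a simple generalization of the one given, for example, in~\cite[Theorem 2]{nenadov_short_2016}'', i.e., the same Ramsey-number-plus-averaging argument over $R$-subsets that you describe. The only cosmetic point is that you first set $k_0 = R$ and later require $k$ large enough for the falling factorial to be at least $k^{v(H_{i^\star})}/2$; just take $k_0$ large enough for both conditions from the start.
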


\medskip
\noindent\textbf{Colorful sparse regularity lemma.} One of the tools required to show that $\Gamma$ is $q$-Ramsey for the tuple $\cT$ is a version of \Szemeredi{}'s celebrated regularity lemma~\cite{szemeredi_regular_1975}. More specifically, we will need the colorful sparse version of the lemma, as given for example in \cite{letzter2016path} (see also \cite[Lemma 3.1]{haxell_induced_1995}). Before giving the precise statement in~\thref{lem:regularity} below, we again need several definitions.

\begin{definition}
Let $G$ be a graph on $n$ vertices, $0<\eta \leq 1$, and $0\leq p\leq 1$. Also let $U$ and $W$ be disjoint subsets of $V(G)$.
The \emph{$p$-density} of the pair $(U,W)$ is defined to be \[d_{G,p}(U,W) = \frac{e_G(U,W)}{p|U||W|},\]
where $e_G(U,W)$ denotes the number of edges in $G$ with one endpoint in $U$ and one endpoint in $W$.

The pair $(U,W)$ is said to be \emph{$(\eps, p)$-regular} if, for all $U'\subseteq U$ and $W'\subseteq W$ with $|U'|\geq \eps |U|$ and $|W'|\geq \eps|W|$, we have
\[|d_{G,p}(U',W')-d_{G,p}(U,W)| \leq \eps.\]
If $(U,W)$ is $(\eps, p)$-regular with $p = \frac{e_G(U,W)}{|U||W|}$, then we say that $(U,W)$ is \emph{$(\eps)$-regular} for short.
A partition $P = (V_1,\dots, V_k)$ of $V(G)$ is an \emph{equipartition} if $|V_i| \in \set*{\floor*{\frac{v(G)}{k}},\ceil*{\frac{v(G)}{k}}}$ for all $i\in [k]$. An equipartition is said to be an \emph{$(\eps, p)$-regular partition} if all but at most $\eps \binom{k}{2}$ pairs $(V_i,V_j)$ are $(\eps,p)$-regular.

\smallskip
A graph $G$ is said to be \emph{$(\eta, D, p)$-upper uniform} if, for all disjoint $U,W\subseteq V(G)$ with $|U|,|W|\geq \eta v(G)$, we have $d_{G,p}(U,W) \leq D.$

\end{definition}

We are now ready to state the version of the regularity lemma that we are going to use.

\begin{lemma}[Colorful sparse regularity lemma]\thlabel{lem:regularity}

Let $\eps >0$ and $D>1$ be fixed reals and $k_0\geq 1$ and $r\geq 1$ be integers. Then there exist constants $\eta = \eta(\eps, k_0, D, r)$ and $K_0 = K_0(\eps, k_0, D, r)$ for which the following holds: If $0\leq p\leq 1$ and $G_1,\dots, G_r$ are $(\eta, D, p)$-upper uniform graphs on vertex set $[n]$, then there is an equipartition $(V_1,\dots, V_k)$ of $[n]$ for some $k_0\leq k\leq K_0$ such that all but at most $\eps\binom{k}{2}$ of the pairs $(V_i,V_j)$ are $(\eps, p)$-regular in  $G_s$ for all $s\in [r]$.
\end{lemma}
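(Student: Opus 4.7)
The plan is to adapt \Szemeredi{}'s classical mean-square-density (energy) increment argument to the sparse, colorful setting, using the upper uniformity hypothesis to tame densities exactly where the sparse case would otherwise break.

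For an equipartition $P = (V_1,\ldots,V_k)$ of $[n]$, I would define the \emph{$p$-index} of $G_s$ at $P$ by
\[
q_{s,p}(P) \;=\; \sum_{1 \leq i < j \leq k} \frac{|V_i||V_j|}{n^2}\, d_{G_s,p}(V_i,V_j)^2,
\]
and the combined index $Q_p(P) = \sum_{s=1}^r q_{s,p}(P)$. Step one is to bound $Q_p(P) \leq r D^2 + 1$, say, whenever every part has size at least $\eta n$: pairs with $d_{G_s,p}(V_i,V_j) \leq D$ contribute at most $D^2$ per color, and the $(\eta,D,p)$-upper uniformity forces us into this regime by a standard splitting of the sum over $s$ and over pairs. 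Step two is the monotonicity fact that refining a partition never decreases $Q_p$, which is the usual convexity/Cauchy--Schwarz computation done term by term in each color.

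The core of the proof is the increment step. If $P$ has more than $\eps \binom{k}{2}$ pairs that are $(\eps,p)$-irregular in some color $G_s$, then for every such pair $(V_i,V_j)$ I pick witnesses $U'_{ij,s}\subseteq V_i$, $W'_{ij,s}\subseteq V_j$ of irregularity, and take the common refinement of all the atoms $\{U'_{ij,s}, V_i\setminus U'_{ij,s}\}$ across $i,j,s$. A defect-form Cauchy--Schwarz (handling the sparse case exactly as in \cite{haxell_induced_1995,letzter2016path}: when one of the witness sets is smaller than $\eta n$ we trivially bound the excess density using $d_{G_s,p}(V_i,V_j)\leq D$ via upper uniformity, and otherwise upper uniformity gives $d_{G_s,p}(U',W')\leq D$ directly) shows that a single irregular pair contributes an increment of at least $\eps^5 |V_i||V_j|/n^2$ to $q_{s,p}$; summing over more than $\eps\binom{k}{2}$ irregular pairs yields a global increment of at least some absolute constant $\gamma = \gamma(\eps)>0$ (say $\gamma = \eps^6/2$) to $Q_p$. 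After equalizing part sizes in the usual way, the new equipartition $P'$ has at most $k \cdot 4^{rk}$ parts.

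Step four is iteration: start from an arbitrary equipartition into $k_0$ parts, and apply the increment step as long as the conclusion fails. Since $Q_p$ is bounded above by $rD^2 + 1$ and grows by at least $\gamma$ at each step, the process terminates after at most $T = \lceil (rD^2+1)/\gamma\rceil$ iterations. Define $K_0$ by iterating $k\mapsto k\cdot 4^{rk}$ exactly $T$ times starting from $k_0$; this gives a tower-type constant depending only on $\eps, k_0, D, r$. Finally choose $\eta = \eta(\eps,k_0,D,r)$ small enough (e.g.\ $\eta < 1/(2K_0)$) so that every part encountered throughout the iteration has size at least $\eta n$, which is what legitimizes every invocation of upper uniformity in the increment step.

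The main obstacle is the sparse aspect: unlike the dense regularity lemma, the densities $d_{G_s,p}$ are not a priori bounded, so one cannot even assert $Q_p \leq rD^2$ without care. The upper-uniformity hypothesis is tailor-made for this, but one must verify that (i) upper uniformity persists throughout the refinement process, which forces $\eta$ to be chosen \emph{after} $K_0$, and (ii) the defect Cauchy--Schwarz argument still yields a clean $\eps^5$-type increment even when the irregularity witnesses $U', W'$ are permitted to be as small as $\eps |V_i|, \eps|V_j|$, which may be far below the upper-uniformity threshold $\eta n$; this is handled by the case split described above, and is the one place where the proof genuinely differs from the dense version.
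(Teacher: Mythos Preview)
The paper does not prove this lemma; it is quoted from \cite{letzter2016path} and \cite{haxell_induced_1995} and used as a black box. Your sketch is the standard energy-increment proof as in those sources, so in that sense it matches.

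One point is miscast, though not fatally. The defect Cauchy--Schwarz in the increment step requires no bound on $d_p(U',W')$: from $\sum_i w_i d_i = d$ one has $\sum_i w_i d_i^2 - d^2 = \sum_i w_i(d_i-d)^2 \geq w_1(d_1-d)^2$, and with $w_1 = |U'||W'|/(|V_i||V_j|) \geq \eps^2$ and $|d_1-d|>\eps$ this already gives the $\eps^4$ gain on that pair, irrespective of how large $d_p(U',W')$ might be. Your case split (``if the witness is smaller than $\eta n$, bound the excess density using $d_{G_s,p}(V_i,V_j)\leq D$'') is therefore unnecessary, and as stated it does not quite make sense: knowing $d_p(V_i,V_j)\leq D$ does not control $d_p(U',W')$. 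Upper uniformity is genuinely needed only to cap $Q_p(P)$ on the equipartitions encountered, all of whose parts have size at least roughly $n/K_0$; your choice $\eta < 1/(2K_0)$ handles exactly this, and choosing $\eta$ after $K_0$ is indeed the crucial order-of-quantifiers point. The step your sketch does gloss over is the equitization after the common refinement: the refined partition may have tiny atoms (whose pairwise $p$-densities are uncontrolled), and one must check that passing back to an equipartition of bounded size preserves the $\gamma$-increment. This is routine but is the one place where the sparse argument needs care beyond the dense template.
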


We will also need the following additional technical lemma, which can be found for example in~\cite[Lemma~4.3]{gerke2005sparse}.
\begin{lemma}\thlabel{lem:feweredges}
Given $0< \eps < 1/6$, there exists a constant $\beta>0$ such that the following holds. For any graph $F = (V_1\cup V_2, E)$ where the pair $(V_1,V_2)$ is $(\eps)$-regular in $F$, and for all $M$ satisfying $\beta v(F) \leq M \leq e(F)$, there exists a subgraph $F'=(V_1\cup V_2, E')$ with $|E'| = M$ and such that $(V_1,V_2)$ is $(2\eps)$-regular in $F'$.
\end{lemma}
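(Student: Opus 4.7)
The plan is a random-sparsification argument: select $E' \subseteq E(F)$ uniformly at random among all $M$-subsets, set $F' := (V_1 \cup V_2, E')$, and show that with positive probability $F'$ is $(2\eps)$-regular. Let $n_1 = |V_1|$, $n_2 = |V_2|$, $n = v(F)$, and write $p := e(F)/(n_1 n_2)$ and $p' := M/(n_1 n_2)$. Since $e(F') = M$ by construction, we have $d_{F', p'}(V_1, V_2) = 1$, so the $(2\eps)$-regularity condition reduces to verifying that $e_{F'}(U', W') = (1 \pm 2\eps)\, p' |U'||W'|$ for every pair $U' \subseteq V_1$, $W' \subseteq V_2$ with $|U'| \geq 2\eps n_1$ and $|W'| \geq 2\eps n_2$.

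Fix such a pair $(U', W')$. By $(\eps)$-regularity of $F$, we have $e_F(U', W') = (1 \pm \eps)\, p |U'||W'|$, and therefore the hypergeometric random variable $X := e_{F'}(U', W')$ has expectation $\mu = M \cdot e_F(U', W')/e(F) = (1 \pm \eps)\, p' |U'||W'|$. A standard Chernoff tail bound for the hypergeometric distribution yields
\[\Pr\!\left[\,\bigl|X - \mu\bigr| > \tfrac{\eps}{2}\, \mu \,\right] \;\leq\; 2 \exp\!\bigl(-c\, \eps^2 \mu\bigr)\]
for an absolute constant $c > 0$. Using $\mu \geq (1-\eps)\, p' |U'||W'| \geq 3\eps^2 M$, this failure probability is at most $2 \exp(-c' \eps^4 M)$ for some $c'>0$. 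On the complementary good event we obtain $X = (1 \pm \tfrac{\eps}{2})(1 \pm \eps)\, p' |U'||W'| = (1 \pm 2\eps)\, p' |U'||W'|$, where the composition of errors uses $\eps < 1/6$.

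A union bound over the at most $4^n$ choices of $(U', W')$ then succeeds once $M \geq C n / \eps^4$ for some absolute constant $C$, and setting $\beta := C/\eps^4$ gives the lemma. The main obstacle, if any, is of a bookkeeping nature: one must quote the correct concentration inequality for the hypergeometric distribution (because we draw exactly $M$ edges rather than sampling each independently), and one must carefully compose the $(1 \pm \eps)$ error coming from regularity with the $(1 \pm \eps/2)$ deviation of $X$ from $\mu$ to land inside $(1 \pm 2\eps)$. Alternatively, one can instead sample each edge of $F$ independently with probability $M/e(F)$, apply the binomial Chernoff bound, and finally add or delete $O(\sqrt{M \log n})$ edges to reach size exactly $M$; this perturbs all relevant densities by a negligible amount and avoids invoking hypergeometric tail bounds.
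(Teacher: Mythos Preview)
The paper does not give its own proof of this lemma; it simply quotes it from Gerke, Kohayakawa, R\"odl, and Steger (Lemma~4.3 in their paper on small subgraphs of random graphs), so there is nothing to compare against directly. Your random-sparsification argument is correct and is in fact the standard way such statements are established: the hypergeometric concentration bound is the right tool, the lower bound $\mu \geq (1-\eps)\cdot 4\eps^2 M$ (from $|U'|\geq 2\eps n_1$, $|W'|\geq 2\eps n_2$) is what makes the union bound over at most $2^{n_1+n_2}$ pairs go through, and the error composition $(1\pm\eps/2)(1\pm\eps)\subseteq (1\pm 2\eps)$ is valid for $\eps<1/6$.
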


\medskip
\noindent\textbf{Enumeration lemma for $C_\ell$-free graphs.}
Let $m,M\geq 1$ and $\ell\geq 4$ be integers, and let $\eps>0$ be a real number. Let $V_1,\dots, V_\ell$ be disjoint sets, each of size $m$. Let $\mathcal{G}(\ell,m, (V_i)_{i=1}^\ell,M,\eps)$ denote the collection of graphs $G$ such that
\begin{itemize}
    \item $V(G) = V_1\cup\dots\cup V_\ell$, where $|V_i| = m$ for each $i\in [\ell]$,
    \item each $V_i$ is an independent set in $G$,
    \item the pair $(V_i,V_{i+1})$ is $\parens*{\eps, \frac{M}{m^2}}$-regular in $G$ with $e_G(V_i,V_{i+1})=M$ for all $i\in [\ell]$\footnote{For convenience, we define $V_{\ell+1}  = V_1$}, and
    \item there are no edges between any other pair $(V_i,V_j)$.
\end{itemize}
 
In other words, the graphs in $\mathcal{G}(\ell,m, (V_i)_{i=1}^\ell,M,\eps)$ are blow-ups of the cycle $C_\ell$ in which each vertex $v_i$ of $C_\ell$ is blown-up to an independent set $V_i$ of size $m$ and such that each edge $v_iv_{i+1}$ of $C_\ell$ corresponds to an $\parens{\eps, \frac{M}{m^2}}$-regular pair $(V_i,V_{i+1})$. Let $\mathcal{F}(\ell,m, (V_i)_{i=1}^\ell,M,\eps)$ denote the set of graphs in $\mathcal{G}(\ell,m, (V_i)_{i=1}^\ell,M,\eps)$ that do not contain $C_\ell$ as a subgraph.

The following enumeration lemma was shown by Gerke, Kohayakawa, R\"odl, and Steger~\cite[Theorem 5.2]{gerke2007small}; it is a special case of a well-known conjecture by Kohayakawa, \L{}uczak, and R\"odl~\cite{klr1997} (the so-called K\L{}R conjecture), which was famously resolved in the general case using the container method~\cite{balogh2015independent,saxton2015hypergraph}.

\begin{lemma}[Counting Lemma]\thlabel{lem:counting}
For any real number $\alpha > 0$ and integer $\ell\geq 4$, there are constants $\eps_0 = \eps_0(\ell, \alpha) >0, C_0 = C_0(\ell, \alpha) > 0,$ and $m_0 = m_0(\ell, \alpha)\geq 1$ such that, for all $m\geq m_0$, $0<\eps\leq\eps_0$, and $M\geq C_0m^{1+1/(\ell-1)}$, we have
\begin{align*}
    \abs{\mathcal{F}\parens{\ell,m, (V_i)_{i=1}^\ell,M,\eps}}\leq \alpha^M\binom{m^2}{M}^\ell.
\end{align*}
\end{lemma}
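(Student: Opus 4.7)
The plan is to prove this counting lemma via a combination of a sparse counting (or supersaturation) lemma for copies of $C_\ell$ together with a deletion/encoding argument. This is a KLR-type statement, and the strategy below mirrors the original approach of Gerke, Kohayakawa, \Rodl{}, and Steger.

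First, I would establish a \emph{sparse counting lemma}: in any graph $G \in \mathcal{G}(\ell, m, (V_i)_{i=1}^\ell, M, \eps)$ with $p = M/m^2$ and $\eps$ sufficiently small in terms of $\ell$, provided $M \geq C_0 m^{1+1/(\ell-1)}$, the number of copies of $C_\ell$ with exactly one vertex in each $V_i$ is at least $c_\ell \cdot p^\ell m^\ell$ for some positive constant $c_\ell = c_\ell(\alpha)$. Note that under our hypothesis $p^\ell m^\ell \geq C_0^\ell m^{\ell/(\ell-1)}$, which grows with $m$ and is much larger than $M$. Hence being $C_\ell$-free becomes a highly restrictive condition.

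Second, I would run a deletion/encoding argument to bound $|\mathcal{F}|$. Each $C_\ell$-free graph $G \in \mathcal{F}$ is encoded as a pair $(G^+, D)$, where $G^+$ is obtained from $G$ by adding a small number $t \ll M$ of carefully chosen edges per pair, and $D \subseteq E(G^+)$ with $|D|=t\ell$ is a set of edges whose removal kills every copy of $C_\ell$ in $G^+$. By \thref{lem:feweredges}, for a suitable choice of $t$ the perturbation preserves $(2\eps)$-regularity, so $G^+$ lies in $\mathcal{G}(\ell,m,(V_i)_{i=1}^\ell,M+t,2\eps)$ and the sparse counting lemma applies to it. Consequently, $D$ must be a near-minimum edge cover of the $c_\ell\, p^\ell m^\ell$ copies of $C_\ell$ in $G^+$; the number of such covers is small. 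Comparing
\begin{align*}
|\mathcal{F}| \;\leq\; \binom{m^2}{M+t}^\ell \cdot N(t,\ell,m,M),
\end{align*}
where $N(t,\ell,m,M)$ counts the admissible deletion sets, and optimizing $t$ yields the claimed bound $\alpha^M \binom{m^2}{M}^\ell$.

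The main obstacle is the sparse counting lemma itself. In the dense regime it is a direct consequence of the classical counting lemma attached to \Szemeredi{}'s regularity lemma, but here $p = M/m^2 \to 0$ and one only has sparse $(\eps,p)$-regularity. The threshold $M \geq C_0 m^{1+1/(\ell-1)}$ is precisely the one above which copies of $C_\ell$ begin to appear in random $\ell$-partite graphs, so one must work at the phase transition. The standard way around this is a second-moment / Janson-type concentration estimate combined with a subgraph-counting induction on sub-path lengths to propagate the count through each regular pair. Once this sparse counting lemma is secured, the deletion step is essentially bookkeeping.
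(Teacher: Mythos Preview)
The paper does not prove this lemma; it quotes it as Theorem~5.2 of Gerke, Kohayakawa, R\"odl, and Steger, so there is no proof in the paper to compare against. That said, your sketch has a genuine gap.

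Your first step, the ``sparse counting lemma,'' is false as stated. You claim that \emph{every} $G \in \mathcal{G}(\ell, m, (V_i)_{i=1}^\ell, M, \eps)$ contains at least $c_\ell\, p^\ell m^\ell$ transversal copies of $C_\ell$. But the graphs in $\mathcal{F}\subseteq\mathcal{G}$ are by definition $C_\ell$-free, so if your lemma held then $\mathcal{F}$ would be empty and there would be nothing to prove. The entire content of a K\L{}R-type statement is precisely that sparse $(\eps,p)$-regularity alone does \emph{not} force copies of $C_\ell$; it only does so for ``most'' graphs in $\mathcal{G}$, and quantifying ``most'' \emph{is} the enumeration lemma you are trying to prove. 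Your deletion step inherits the same circularity: adding $t\ll M$ arbitrary edges to a $C_\ell$-free $(\eps,p)$-regular graph need not create any copies of $C_\ell$ at all, let alone $c_\ell\,p^\ell m^\ell$ of them, so nothing forces the edge-cover $D$ to have controllable size, and the encoding $(G^+,D)$ does not compress.

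The actual route in Gerke--Kohayakawa--R\"odl--Steger does not go through a deterministic embedding lemma. Its engine is an \emph{inheritance of regularity} statement: for an $(\eps,p)$-regular pair $(V_1,V_2)$, all but an exponentially small fraction of pairs of subsets $(U_1,U_2)$ of prescribed sizes inherit $(\eps',p)$-regularity. Iterating this along the cycle lets one control how partial embeddings extend, and the bound on $|\mathcal{F}|$ comes from directly counting the exceptional configurations rather than from covering copies of $C_\ell$ in an augmented graph. A cleaner modern alternative is the hypergraph container method: one produces a small family of containers, each with $o(m^2)$ edges between consecutive parts, such that every $C_\ell$-free graph on $V_1\cup\dots\cup V_\ell$ lies inside some container; summing $\prod_i\binom{e_i}{M}$ over the containers then yields the bound $\alpha^M\binom{m^2}{M}^\ell$.
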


\subsection{Construction of a special graph \texorpdfstring{$\mathbf{\Gamma}$}{Gamma}}

For the rest of the section, assume that $n$ is a sufficiently large integer with respect to $\ell, t, q, q_1,$ and $q_2$; in all asymptotic estimates in this section, we assume that $n$ tends to infinity. We begin by fixing some constants. 
Let $h = r_{q_2}(K_t)$; it is not difficult to check that $K_h$ is minimal $q_2$-Ramsey for $K_t$.  Let
\begin{align*}
k_0 = k_0(\underbrace{C_\ell, \dots, C_\ell}_{q_1\text{ times}}, K_h,K_2), \quad c = c(\underbrace{C_\ell, \dots, C_\ell}_{q_1\text{ times}}, K_h,K_2)
\end{align*}
be the constants given by~\thref{lem:Ramsey}. We next set
\begin{align*}
\rho = \frac{c}{2q_1},\quad \alpha = \frac{\rho^\ell}{e^{\ell+1}},\quad D=3h^2.
\end{align*}
Let $\eps_0 = \eps_0(\ell, \alpha), m_0 = m_0(\ell, \alpha)$, and $C_0 = C_0(\ell, \alpha)$ be the constants given by~\thref{lem:counting}, and set 
\begin{align*}
\eps = \min\set{\rho\eps_0/2, \rho/10},\quad C = \max\set{C_0,1}.
\end{align*}
Further, let
\begin{align*}
    \eta = \eta(\eps, k_0, D, q_1),\quad    K_0 = K_0(\eps, k_0, D, q_1),\quad    \beta = \beta(\eps/\rho)
\end{align*}
be the constants from~\thref{lem:regularity,lem:feweredges}.  Finally, define
\begin{align*}
    A = \max\set{(h+1)e^{-h}, \rho^{-1}K_0^{1-1/(\ell-1)}C},\quad p_h = An^{-(h-1)+1/(\ell-1)}, \quad p_e = An^{-1+1/(\ell-1)}.
\end{align*}

\smallskip
Let $\mathcal{H}$ be a hypergraph on $[n]$ sampled from $\mathcal{H}_{n,p_h}$ as in~\thref{lem:randomhypergraph}. Let $\mathcal{G}$ be the hypergraph obtained from $\mathcal{H}$ after the removal of one hyperedge from each cycle of length less than $\ell$. Then $\mathcal{G}$ contains no cycles of length less than $\ell$; by~\thref{lem:randomhypergraph}~\ref{lem:randomhypergraph:edges} and~\ref{lem:randomhypergraph:cycles}, we also know that $e(\mathcal{G}) = (1+o(1))\binom{n}{h}p_h$.

Let $\Gamma$ be the graph on $[n]$ obtained by embedding a copy of $K_h$ into every hyperedge of $\mathcal{G}$, i.e., $\Gamma$ is the graph on $[n]$ in which two vertices are adjacent if and only if they are contained in a common hyperedge of the hypergraph $\mathcal{G}$. The main difference between this construction and the one given in~\cite{bollobas2001ramsey} is that, in order to deal with multiple colors, instead of placing just a copy of our target graph $K_t$ in each hyperedge of $\mathcal{G}$, we place a Ramsey graph for it. For a given graph $F$ and a subgraph $\Gamma'\subseteq\Gamma$, we call a copy $F'$ of $F$ in $\Gamma'$ a \emph{hyperedge copy} if the vertex set of  $F'$ is contained within a single hyperedge of $\mathcal{G}$. All remaining copies of $F$ in $\Gamma'$ are referred to as \emph{non-hyperedge copies}. In addition, we call a subgraph $\Gamma'\subseteq \Gamma$ \emph{transversal} if there exists a bijection $f:E(\Gamma') \to E(\mathcal{G})$ such that $e\subseteq f(e)$ for all $e\in E(\Gamma')$; that is, $\Gamma'$ is transversal if it contains exactly one edge from each hyperedge copy of $K_h$ in $\Gamma$.

\smallskip
Before showing that with high probability $\Gamma \rightarrow_q \cT(q_1,q_2,\ell, t)$ in~\thref{thm:GammaRamsey}, we discuss some properties of the graph $\Gamma$ in~\thref{lem:propertiesGamma} below. The proofs of parts~\ref{lem:propertiesGamma:only_hyperedge_copies},
~\ref{lem:propertiesGamma:upper_uniform},~\ref{lem:propertiesGamma:copies_of_H} are essentially the same as those given in~\cite{bollobas2001ramsey}. The proof of~\ref{lem:propertiesGamma:contains_cycle} is by now also standard in light of the recently resolved K\L{}R conjecture; as we believe that our version (using more modern results) can be generalized more easily to other tuples of graphs, we include the details in Appendix \ref{app:proof_lemma}.

\begin{lemma}\thlabel{lem:propertiesGamma}
The graph $\Gamma$ satisfies each of the following properties with high probability:
\begin{enumerate}[label=(\alph*)]
    \item If $F$ is a $2$-connected graph with no induced cycles of length $\ell$ or more, then every copy of $F$ in $\Gamma$ is a hyperedge copy; in particular, every copy of $K_h, K_t$, and $C_{\ell'}$ for any $\ell'< \ell$ in $\Gamma$ is a hyperedge copy.\label{lem:propertiesGamma:only_hyperedge_copies}
    \item $\Gamma$ is $(\eta, D, p_e)$-upper uniform.\label{lem:propertiesGamma:upper_uniform}
    \item Let $m$ be an integer satisfying  $\frac{n}{K_0}\leq m \leq \frac{n}{k_0}$, let $(V_1,\dots, V_\ell)$ be any $\ell$-tuple of disjoint subsets of $V(\Gamma)$ such that $|V_i| = m$ for all $i\in[\ell]$, and let $\Gamma'\subseteq \Gamma$ be transversal. If the pairs $(V_i,V_{i+1})$ are $(\eps, p_e)$-regular in $\Gamma'$ with $p_e$-density at least $\rho$ for all $i\in [\ell]$, then $\Gamma'[V_1\cup \dots \cup V_\ell]$ contains a copy of $C_\ell$.\label{lem:propertiesGamma:contains_cycle}
    \item Let $m$ be an integer satisfying $\frac{n}{\log n} \leq m\leq \frac{n}{h}$ and  $(W_1,\dots, W_h)$ be an $h$-tuple of pairwise disjoint subsets of $V(\Gamma)$ with $|W_i| = m$ for all $i\in [h]$. Then there are at least $\frac{1}{4}m^hp_h$ distinct copies of $K_h$ contained in the multipartite subgraph of $\Gamma$ spanned by $W_1\cup\dots\cup W_h$.\label{lem:propertiesGamma:copies_of_H}
\end{enumerate}
\end{lemma}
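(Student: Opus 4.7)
The plan is to establish each of the four properties using standard probabilistic arguments on $\mathcal{H}_{n,p_h}$, combined with the fact that $\mathcal{G}$ has girth at least $\ell$ by construction. Parts~(a), (b), and~(d) follow the blueprint of~\cite{bollobas2001ramsey}, with $K_h$ replacing $K_t$ to accommodate multiple clique-colors, while part~(c) is where the Counting Lemma (now a theorem, following the resolution of the K\L{}R conjecture) does the main work.

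For~(a), I would argue by contradiction. Suppose a copy of a 2-connected $F$ (with no induced cycles of length $\geq \ell$) in $\Gamma$ spans $k \geq 2$ hyperedges of $\mathcal{G}$. Since any two hyperedges of $\mathcal{G}$ share at most one vertex (a consequence of girth $\geq \ell \geq 4$), the auxiliary multigraph whose vertices are the spanning hyperedges of $F$ and whose edges record shared vertices cannot be a tree, for otherwise a shared vertex would be a cut vertex of $F$, violating 2-connectedness. A shortest cycle in this auxiliary graph gives a cycle in $\mathcal{G}$, and -- together with the absence of long induced cycles in $F$, which allows one to shortcut any long cycle through $F$ via chords -- a cycle in $\mathcal{G}$ of length strictly less than $\ell$, contradicting its girth.

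For~(b), the bound $e_\Gamma(U,W) \leq h^2 \cdot |\{e \in \mathcal{G} : e \cap U, e \cap W \neq \emptyset\}|$ reduces the problem to counting hyperedges of $\mathcal{H}_{n,p_h}$ intersecting both $U$ and $W$. For each such pair with $|U|,|W| \geq \eta n$ the expected count is $O(n^h p_h) = O(n^2 p_e)$ (noting $n^{h-1} p_h = n p_e$), and Chernoff gives exponential concentration; a union bound over the at most $4^n$ choices of $(U,W)$ yields the claim with $D = 3h^2$. For~(d), fix $(W_1,\dots,W_h)$ with $|W_i| = m \geq n/\log n$; the number of hyperedges in $\mathcal{H}_{n,p_h}$ placing one vertex in each $W_i$ has mean $m^h p_h$, which is polynomially larger than $hm \log n$ in this regime, so a Chernoff-plus-union-bound argument yields at least $\tfrac12 m^h p_h$ such hyperedges w.h.p.\ for every admissible tuple. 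A standard first-moment calculation also shows that only $o(m^h p_h)$ of these transversal hyperedges lie in short cycles of $\mathcal{H}_{n,p_h}$; hence at least $\tfrac14 m^h p_h$ survive the alteration in $\mathcal{G}$, each yielding a distinct copy of $K_h$ in $\Gamma$.

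Part~(c) is the most delicate. Fix $m \in [n/K_0, n/k_0]$, an $\ell$-tuple $(V_1,\dots,V_\ell)$ of disjoint $m$-sets, and a transversal $\Gamma'$; each pair $(V_i, V_{i+1})$ in $\Gamma'[V_1 \cup \dots \cup V_\ell]$ carries $e_i \in [\rho p_e m^2, D p_e m^2]$ edges and is $(\eps, p_e)$-regular. Applying~\thref{lem:feweredges} to pass to a subgraph of uniform edge count $M$ in that range with $(2\eps/\rho)$-regular pairs in the usual dense sense, it suffices to exclude the existence of a $C_\ell$-free $F \in \mathcal{F}(\ell, m, (V_i), M, \eps/\rho)$ inside $\Gamma$ whose edges come from distinct hyperedges of $\mathcal{G}$. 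Since the $M$ edges of such an $F$ use $M$ distinct hyperedges, the probability that a specific embedding is realized is at most $p_h^M$ times combinatorial factors accounting for the choice of hyperedge containing each pair of endpoints. Combining this with the Counting Lemma bound $|\mathcal{F}(\ell, m, (V_i), M, \eps/\rho)| \leq \alpha^M \binom{m^2}{M}^\ell$ and union-bounding over the $\binom{n}{m}^\ell$ choices of $(V_i)$, the admissible values of $M$, and the transversal structures, the calibration $\alpha = \rho^\ell / e^{\ell+1}$ drives the expected count to zero. The main obstacle is the parameter bookkeeping: checking that $M \geq C m^{1+1/(\ell-1)}$ (so the Counting Lemma applies in the relevant range of $m$, which is exactly what the slightly awkward definition of $A$ enforces) and that the calibrated $\alpha$ beats all the combinatorial overhead arising from the union bounds.
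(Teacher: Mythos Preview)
Your approach matches the paper's: parts~(a), (b), (d) are deferred to~\cite{bollobas2001ramsey}, and for~(c) the appendix follows the same regularize-then-count-then-union-bound outline (pass to equal edge counts via \thref{lem:feweredges}, bound $|\mathcal{F}|$ with \thref{lem:counting}, then union-bound over $m$, $M$, and the $\ell$-tuples $(V_i)$). One slip to correct: a graph $F\in\mathcal{F}(\ell,m,(V_i)_{i=1}^\ell,M,\cdot)$ has $M\ell$ edges ($M$ in each of the $\ell$ consecutive pairs), not $M$, so the embedding probability for a fixed $F$ inside some transversal is at most $\bigl(\binom{n-2}{h-2}p_h\bigr)^{M\ell}\le p_e^{M\ell}$; this exponent $M\ell$ is precisely what makes the calibration $\alpha=\rho^\ell/e^{\ell+1}$ beat the factor $\binom{m^2}{M}^\ell\le (em^2/M)^{M\ell}$ and drive the failure probability down to $e^{-M}$.
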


We are now ready to show the main result of this section.

\begin{theorem}\thlabel{thm:GammaRamsey}
With high probability, $\Gamma \rightarrow_q\cT$.
\end{theorem}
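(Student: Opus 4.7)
The plan is to argue by contradiction. Suppose $\phi\colon E(\Gamma)\to[q]$ is a $\cT$-free $q$-coloring. The first step exploits the choice $h=r_{q_2}(K_t)$: every copy of $K_h$ in $\Gamma$ is a hyperedge copy by Lemma~\ref{lem:propertiesGamma}\ref{lem:propertiesGamma:only_hyperedge_copies}, and in each such $K_h$ at least one edge must carry a cycle color (otherwise the inherited $q_2$-coloring of this $K_h$ in the clique palette would produce a monochromatic $K_t$ in a clique color). Selecting one cycle-colored edge from each of the $e(\mathcal G)$ hyperedge copies yields a transversal subgraph $\Gamma'\subseteq\Gamma$ with $e(\Gamma')=(1+o(1))n^2p_e/h!$, partitioned by $\phi$ into $q_1$ cycle-color classes $G_1,\dots,G_{q_1}$.

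Next I would apply the colorful sparse regularity lemma (Lemma~\ref{lem:regularity}) to the tuple $(G_1,\dots,G_{q_1})$, each of which inherits $(\eta,D,p_e)$-upper uniformity from $\Gamma$ via Lemma~\ref{lem:propertiesGamma}\ref{lem:propertiesGamma:upper_uniform}. This produces an equipartition $V_1,\dots,V_k$ with $k_0\le k\le K_0$ in which at most $\eps\binom{k}{2}$ of the pairs fail to be simultaneously $(\eps,p_e)$-regular in every $G_c$. On top of this I build an auxiliary edge-colored complete graph $R$ on $[k]$ using $q_1+2$ colors: a pair $(V_i,V_j)$ receives color $c\in[q_1]$ if it is regular in every cycle color and some $G_c$ has $p_e$-density at least $\rho$ in it; it receives the special color $\star$ if it is regular but every cycle color has density less than $\rho$ in it; and it receives $\Box$ if it is irregular in some cycle color. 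Applying the quantitative Ramsey lemma (Lemma~\ref{lem:Ramsey}) to $R$ with target tuple $(\underbrace{C_\ell,\dots,C_\ell}_{q_1\text{ times}},K_h,K_2)$---for which the constants $k_0$ and $c$ were chosen at the start of the section---then produces many monochromatic copies of one of these graphs in the corresponding color, since $k\ge k_0$.

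The three resulting cases are handled as follows. In the $C_\ell$-case in some cycle color $c$, the $\ell$ chosen classes $V_{i_1},\dots,V_{i_\ell}$ satisfy the hypotheses of Lemma~\ref{lem:propertiesGamma}\ref{lem:propertiesGamma:contains_cycle} when applied to an appropriate transversal subgraph obtained from $\Gamma'$ by ensuring that each hyperedge contributing an edge in a relevant pair contributes a $c$-colored one, yielding an honest monochromatic $C_\ell$ of $\phi$-color $c$ and contradicting $\cT$-freeness. In the $K_2$-case in $\Box$, the $\Omega(k^2)$ irregular pairs would exceed the $\eps\binom{k}{2}$ allowance granted by the regularity lemma, impossible because $\eps$ was chosen small relative to the Ramsey constant $c$. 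In the $K_h$-case in $\star$, the selected clusters $V_{i_1},\dots,V_{i_h}$ support at most $\binom{h}{2}q_1\rho p_em^2$ cycle-colored edges (by the definition of $\star$), while Lemma~\ref{lem:propertiesGamma}\ref{lem:propertiesGamma:copies_of_H} supplies at least $m^hp_h/4$ hyperedge copies of $K_h$ on those classes; a double count that uses the identity $p_e=p_hn^{h-2}$ shows that, for the chosen constants, nearly all of these $K_h$'s are entirely colored in clique colors, hence each contains a monochromatic $K_t$ in some clique color by $h=r_{q_2}(K_t)$, contradicting $\cT$-freeness once more.

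\textbf{Main obstacle.} The technical heart is the $\star$-case: one must couple Lemma~\ref{lem:propertiesGamma}\ref{lem:propertiesGamma:copies_of_H} with the sparse density cap supplied by the $\star$-labels, and the constants $\rho,\eps,\eta,K_0$ must be coordinated so that the double count truly closes despite the circular chain $c\to\rho\to\eps\to K_0$. The elaborate constant setup at the start of Section~\ref{sec:existence_set_signal}---in particular the choices $\rho=c/(2q_1)$ and $\eps=\min\{\rho\eps_0/2,\rho/10\}$---is engineered precisely to make this balancing work, and the transversal restriction required in the $C_\ell$-case adds a second layer of care in deciding which edges to include in $\Gamma'$.
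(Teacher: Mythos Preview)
Your overall strategy matches the paper's: pass to a transversal $\Gamma'$ of cycle-colored edges, apply the colorful sparse regularity lemma, build the $(q_1+2)$-colored reduced graph, and invoke the quantitative Ramsey lemma. The handling of the $C_\ell$-case and the irregular-pair case is essentially the same as in the paper.

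The gap is in the $\star$-case. You focus on a \emph{single} monochromatic $K_h$ in the reduced graph and try to close a local double count between the at most $\binom{h}{2}q_1\rho p_e m^2$ edges of $\Gamma'$ inside those $\binom{h}{2}$ pairs and the at least $\tfrac14 m^h p_h$ hyperedge copies of $K_h$ there. With $p_e=p_h n^{h-2}$ and $m=n/k$, this would require $k^{-(h-2)}/4>\binom{h}{2}q_1\rho$, which you have no control over once $k\ge k_0$; the constants in the paper are not tuned for this local inequality. Moreover, your conclusion that ``nearly all of these $K_h$'s are entirely colored in clique colors'' cannot be reached from the $\star$-condition: the density cap applies only to edges of $G_s\subseteq\Gamma'$, and by construction \emph{every} hyperedge $K_h$ already contributes one edge to $\Gamma'$, so none of them can be entirely clique-colored.

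The paper closes this case differently: it counts \emph{globally}. The upper bound is taken over \emph{all} $\star$-pairs, giving at most $\binom{k}{2}q_1\rho p_e m^2<\tfrac{c}{4}An^{1+1/(\ell-1)}$ edges of $\Gamma'$ there. For the lower bound it uses the full $ck^h$ monochromatic reduced $K_h$'s guaranteed by the quantitative Ramsey lemma; the hyperedge-$K_h$ families coming from distinct reduced $K_h$'s are pairwise disjoint (each such copy meets every class of its reduced $K_h$), so Lemma~\ref{lem:propertiesGamma}\ref{lem:propertiesGamma:copies_of_H} yields at least $ck^h\cdot\tfrac14 m^h p_h=\tfrac{c}{4}An^{1+1/(\ell-1)}$ edges of $\Gamma'$ inside $\star$-pairs. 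The factor $ck^h$ is exactly what makes the two sides comparable and produces the contradiction; without it the count does not balance.
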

\begin{proof} 
We condition on $\Gamma$ having all of the properties given in~\thref{lem:propertiesGamma}. For convenience, we may assume also that $\frac{n}{k}$ is an integer for all $k_0 \leq k\leq K_0$. Consider an arbitrary $q$-coloring $\phi$ of the graph $\Gamma$. If any copy of $K_h$ receives only colors in $\CliqueColors$, then we are done since $h = r_q(K_t)$. So suppose that each such copy has at least one edge whose color comes from $\CycleColors$. Let $\Gamma'$ be a graph on $V(\Gamma) = [n]$ obtained by taking exactly one edge that has a cycle-color from each hyperedge copy of $K_h$ in $\Gamma$; note that $\Gamma'$ is a transversal subgraph. We claim that $\Gamma'$ contains a copy of $C_\ell$ in some cycle-color. 

For each $s\in \CycleColors$, let  $G_s$ be the subgraph of $\Gamma'$ on vertex set $[n]$ consisting of all edges that have color $s$ under $\phi$. By~\thref{lem:propertiesGamma}~\ref{lem:propertiesGamma:upper_uniform}, we know that $\Gamma$ is $(\eta, D, p_e)$-upper uniform, and hence $G_s$  is $(\eta, D, p_e)$-upper uniform for all $s\in\CycleColors$. So by~\thref{lem:regularity}, there exists an equipartition $(V_1,\dots, V_k)$ of $[n]$ in which all but at most $\eps\binom{k}{2}$ pairs $(V_i,V_j)$ are $(\eps, p_e)$-regular in every $G_s$ for $s\in \CycleColors$. Let $m = \frac{n}{k}$; by our choice of $k_0, K_0$, and $n$, we know that $m$ is an integer and $\frac{n}{K_0} \leq  \frac{n}{k} = m \leq \frac{n}{k_0}$.

\smallskip
Let $K_k$ be the complete graph on vertex set $\set{V_1,\dots, V_k}$. Consider the following $(q_1+2)$-coloring of the edges of $K_k$ with the color palette $\set{c_1,\dots, c_{q_1+2}}$. If the pair $(V_i, V_j)$ is $(\eps, p_e)$-regular in all $G_s$ for $s\in \CycleColors$ and has $p_e$-density at least $\rho$ in some $G_s$, give the edge between $V_i$ and $V_j$ in $K_k$ color $c_s$ (breaking ties arbitrarily). If the pair $(V_i,V_j)$ is $(\eps, p_e)$-regular in  $G_s$ for all $s\in \CycleColors$, but its $p_e$-density is less than $\rho$ in every such $G_s$, then color the edge between $V_i$ and $V_j$ in $K_k$ with color $c_{q_1+1}$.
Finally, if $(V_i,V_j)$ is not $(\eps,p_e)$-regular in $G_s$ for some $s\in\CycleColors$, let the edge between $V_i$ and $V_j$ in $K_k$ have color $c_{q_1+2}$.

By the fact that $k\geq k_0$ and our choice of $k_0$ (from~\thref{lem:Ramsey}), we know that at least one of the following must occur:
\begin{enumerate}[label=(\alph*)]
    \item For some $s\in[q_1]$, there are at least $c k^\ell$ copies of $C_\ell$ in color $c_s$.\label{item:ProofGammaRamsey_option_cycles}
    \item There are at least $c k^h$ copies of $K_h$ that are monochromatic in color $c_{q_1+1}$.\label{item:ProofGammaRamsey_option_Kh}
    \item There are at least $c k^2$ edges of color $c_{q_1+2}$.\label{item:ProofGammaRamsey_option_K2}
\end{enumerate}

If~\ref*{item:ProofGammaRamsey_option_cycles} occurs for some color $c_s\in[q_1]$, the fact that $ck^\ell \geq ck_0^\ell > 0$, together with property~\ref{lem:propertiesGamma:contains_cycle} in~\thref{lem:propertiesGamma}, implies that there is a copy of $C_\ell$ in $\Gamma'$ in color $s$. It remains to show that neither of the other cases can occur.

First consider option~\ref*{item:ProofGammaRamsey_option_K2}. We know that there are at most $\eps\binom{k}{2}$ pairs $(V_i,V_j)$ that are not $(\eps, p_e)$-regular in  $G_s$ for some $s\in\CycleColors$, and we have
\[\eps\binom{k}{2} \leq \frac{1}{10}\rho \binom{k}{2} \leq \frac{1}{10}c \binom{k}{2} < ck^2,\]
where the first two inequalities follow by the definitions of $\eps$ and $\rho$. Hence, option~\ref*{item:ProofGammaRamsey_option_K2} is indeed impossible.

We now prove that option~\ref*{item:ProofGammaRamsey_option_Kh} cannot occur. Suppose it does. We estimate the number of edges of $\Gamma'$ corresponding to pairs of color $c_{q_1+1}$ in two different ways. First note that if there is an edge of color $c_{q_1+1}$ between vertices $V_i$ and $V_j$, then the $(\eps,p_e)$-regular pair $(V_i,V_j)$ has $p_e$-density at most $\rho$ in $G_s$ for each $s\in\CycleColors$. Hence, in total, the pair  $(V_i,V_j)$ has $p_e$-density at most $q_1\rho$ in $\Gamma'$. Hence, the number of edges in $\Gamma'$ between pairs $(V_i,V_j)$ corresponding to color $c_{q_1+1}$ is at most 
\begin{equation}
    \binom{k}{2}q_1\rho p_e m^2 = \binom{k}{2}q_1 \rho p_e \parens*{\frac{n}{k}}^2 < \frac{1}{2}q_1\rho A n^{1+1/(\ell-1)} = \frac{c}{4} A n^{1+1/(\ell-1)}.\label{eq:ub}
\end{equation}

Now, since option~\ref*{item:ProofGammaRamsey_option_Kh} occurs, we have at least $ck^h$ copies of $K_h$ that are monochromatic in color $c_{q_1+1}$ in $K_k$. Denote these by $K_h^1,K_h^2,\ldots,K_h^x$, where $x=\ceil{ck^h}$. The vertex set $V(K_h^i)$ of each such copy gives an $h$-partite subgraph $J_i\subseteq \Gamma$ induced by the sets $V_j$ corresponding to the vertices of $K_h^i$.
As each partite set of $J_i$ has size $m\geq \frac{n}{K_0}\geq\frac{n}{\log n}$,~\thref{lem:propertiesGamma}~\ref{lem:propertiesGamma:copies_of_H} guarantees that $J_i$ contains a family $\mathcal{H}_i$ of at least $\frac{1}{4}m^hp_h$ distinct hyperedge copies of $H$, for every $i\in [x]$.
As each hyperedge copy in $\mathcal{H}_i$ intersects each partite set of $K_h^i$, it is immediate that $\mathcal{H}_i\cap\mathcal{H}_j\neq\emptyset$ for $i\neq j$. Hence, there exist $\size*{\bigcup_{i\in[x]}\mathcal{H}_i}\geq\frac{1}{4}ck^hm^hp_h$ copies of $K_h$ in $\Gamma$. Since every copy of $K_h$ in $\Gamma$ is a hyperedge copy and no two hyperedge copies share an edge, we find that $\Gamma'$ has at least  
\begin{align}
     \frac{1}{4}ck^hm^hp_h \geq c k^h \frac{1}{4} \parens*{\frac{n}{k}}^h A n^{-h+1 +1/(\ell-1)} = \frac{c}{4}A n^{1+1/(\ell-1)}
\end{align}
edges corresponding to pairs $(V_i,V_j)$ in color $c_{q_1+1}$, contradicting \eqref{eq:ub}.\end{proof}

\subsection{Construction of set-determiners}
This section uses ideas from~\cite{siggers_non-bipartite_2014} to prove~\thref{lem:cycle_and_clique_determiners}.  Recall that \CycleColors{} and \CliqueColors{} denote the cycle-colors $\{1,\ldots,q_1\}$ and clique-colors $\{q_1+1,\ldots,q\}$, respectively. 
By construction and by~\thref{lem:propertiesGamma}, we know that $\Gamma$ satisfies the following properties:
\begin{enumerate}[label=(\roman*)]
    \item Every copy of $K_t$ in $\Gamma$ is a hyperedge copy.\label{item:Construction_det_NoH_NoKt}
    \item Every copy of $C_{\ell'}$ for $\ell'<\ell$ is a hyperedge copy.\label{item:Construction_det_NoCycle}
    \item Each edge of $\Gamma$ belongs to a unique copy of $K_h$.\label{item:Construction_det_uniqueH}
\end{enumerate}

Now, let $G\subseteq \Gamma$ be a minimal $q$-Ramsey graph for the $q$-tuple $\cT(q_1,q_2,\ell, t)$; it is not difficult to see that $G$ satisfies properties~\ref*{item:Construction_det_NoH_NoKt} and~\ref*{item:Construction_det_NoCycle} given above. In fact, we have a good understanding of what $G$ needs to look like, as given in the following lemma. Naturally, the lemma also establishes that $G$ satisfies property~\ref*{item:Construction_det_uniqueH} above. 

\begin{lemma}\thlabel{lem:structureG} 
The graph $G$ is the union of hyperedge copies of $K_h$, that is, every edge of $G$ belongs to a hyperedge copy of $K_h$ in $G$.
\end{lemma}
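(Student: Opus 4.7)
The plan is to argue by contradiction. Let $G'$ denote the union of all hyperedge copies of $K_h$ that are contained in $G$; so $G' \subseteq G$, and the lemma is equivalent to $G' = G$. Suppose instead that $G' \subsetneq G$. Since $G$ is minimal $q$-Ramsey for $\cT$, the proper subgraph $G'$ is not $q$-Ramsey for $\cT$, so there exists a $\cT$-free $q$-coloring $\phi$ of $G'$. The strategy is to extend $\phi$ to a $\cT$-free coloring of the whole graph $G$, contradicting $G \to_q \cT$.

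The starting structural observation is that property~\ref{item:Construction_det_uniqueH} forces a clean partition of $E(G)\setminus E(G')$ along hyperedge copies. Since every edge of $\Gamma$ lies in a unique hyperedge copy of $K_h$, an edge of $G$ lies in $G'$ if and only if its unique containing copy $F$ is entirely contained in $G$. Consequently, the family $\{F\cap G : F\not\subseteq G\}$ partitions $E(G)\setminus E(G')$. Moreover, the underlying hypergraph $\mathcal{G}$ has girth at least $\ell\geq 4$, so any two distinct hyperedges share at most one vertex, and hence distinct hyperedge copies of $K_h$ in $\Gamma$ are edge-disjoint. This allows the various $F\cap G$ (for $F\not\subseteq G$) to be colored independently without conflict.

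For the extension itself, I would color every edge of $E(G)\setminus E(G')$ using only the clique-colors $\CliqueColors$. Specifically, for each hyperedge copy $F\not\subseteq G$, the graph $F\cap G$ is a proper subgraph of $K_h$, hence a subgraph of $K_h - e$ for some missing edge $e$. Because $h = r_{q_2}(K_t)$ and $K_h$ is minimal $q_2$-Ramsey for $K_t$, the graph $K_h - e$ admits a $K_t$-free $q_2$-coloring using colors from $\CliqueColors$; restricting this coloring to $F\cap G$ yields the desired coloring on $F\cap G$.

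It remains to verify that the combined coloring of $G$ is $\cT$-free, and here lies the main subtlety. A monochromatic copy of $K_t$ in any clique-color must, by property~\ref{item:Construction_det_NoH_NoKt}, lie inside a single hyperedge copy $F$ of $K_h$. If $F\subseteq G$, the $K_t$ is in $G'$ and is ruled out by the $\cT$-freeness of $\phi$; if $F\not\subseteq G$, the $K_t$'s edges lie in $F\cap G$, which was colored $K_t$-freely by construction. A monochromatic copy of $C_\ell$ in any cycle-color uses only cycle-colored edges, but the extension uses only clique-colors, so any such $C_\ell$ lies entirely in $G'$, where $\phi$ excludes it. This contradicts $G\to_q\cT$, and therefore $G' = G$. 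The crux is the combined use of property~\ref{item:Construction_det_uniqueH} to prevent any $K_t$ copy from straddling $G'$ and $E(G)\setminus E(G')$, and the decision to commit the extension to clique-colors so that monochromatic $C_\ell$ copies in a cycle-color are trapped inside $G'$.
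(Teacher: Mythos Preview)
Your proof is correct and rests on the same idea as the paper's: exploit minimality of $G$ to get a $\cT$-free coloring of a proper subgraph, extend over the remaining edges using only clique-colors (possible because those edges sit inside proper subgraphs of $K_h$, and $K_h$ is minimal $q_2$-Ramsey for $K_t$), and then use property~\ref{item:Construction_det_NoH_NoKt} to confine any clique-colored $K_t$ to a single hyperedge.

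The only difference is the choice of proper subgraph. You take the global $G'$, the union of all complete hyperedge copies, and recolor every incomplete hyperedge at once; this requires your observation that distinct hyperedge copies are edge-disjoint so the extensions do not clash. The paper is more local: it fixes one offending edge $e$, lets $H$ be the hyperedge copy of $K_h$ containing it, takes a $\cT$-free coloring of $G-H$ by minimality, and then colors just $H\cap G$ with clique-colors. This avoids needing to coordinate across multiple incomplete hyperedges. Both routes are equally valid and essentially equivalent in spirit.
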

\begin{proof}
Suppose there is an edge $e$ that does not belong to a copy of $K_h$ in $G$. We know that $e$ \emph{does} belong to a copy of $K_h$ in $\Gamma\supseteq G$; let $H$ denote this copy of $K_h$ in $\Gamma$ and let $F$ denote the set of edges on $V(H)$ that are in $\Gamma$ but not in $G$. Notice that $\emptyset \subsetneq F \subsetneq E(H)$ by our assumption. 

By the minimality of $G$, we know that $G-H$ has a $\cT$-free $q$-coloring $\phi$. Additionally, since $K_h$ is minimal $q_2$-Ramsey for $K_t$, the graph $H-F$ has a $K_t$-free $q_2$-coloring $\phi': E(H-F) \rightarrow \CliqueColors{}$. We now define a $q$-coloring $\widetilde{\phi}$ of $G$ by setting $\widetilde{\phi} = \phi\cup\phi'$.

We claim that $\widetilde{\phi}$ is a $\cT$-free $q$-coloring of $G$.
Indeed, since $\phi$ is a $\cT$-free coloring of $G-H$, there are no monochromatic cycles in any cycle-color, and since in the coloring of $H-F$ we add no further edges in these colors, we know that there are no monochromatic copies of $C_\ell$ in any cycle-color in all of $G$. Furthermore, since there are no non-hyperedge copies of $K_t$ in $G$ and neither $\phi$ nor $\phi'$ contains a monochromatic copy of $K_t$ in any color in $\CliqueColors$, we know that there are also no monochromatic copies of $K_t$  in any clique-color in all of $G$. Hence $\widetilde{\phi}$ is a $\cT$-free $q$-coloring of $G$, contradicting the fact that $G\rightarrow_q\cT$.
\end{proof}

Now, let $e$ be a fixed edge of $G$ and let $H$ be the copy  of $K_h$ in $G$ containing $e$. Let $D$ be the graph obtained from $G$ by removing all edges of $H$ except for $e$, that is, $D = G - (H-e)$.  We now claim that $D$ is a \CliqueColors-determiner for the tuple $\cT$. This construction generalizes the one presented by Siggers~\cite{siggers_non-bipartite_2014}.

\begin{lemma}\thlabel{lem:clique-determiner}
The graph $D$ is a safe \CliqueColors-determiner for the tuple $\cT$ with signal edge $e$.
\end{lemma}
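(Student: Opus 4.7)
The plan is to verify the three axioms of Definition~\ref{def:safe_set_determiner} for $D$ with color palette $\CliqueColorsQ$, and then show that the coloring exhibited in axiom~\ref{axiom:set_determiner_any_color} can be chosen safe at $e$. The principal structural tools are: every $K_t$ in $\Gamma$ is a hyperedge copy (property~\ref{item:Construction_det_NoH_NoKt}), every cycle of length less than $\ell$ is a hyperedge copy (property~\ref{item:Construction_det_NoCycle}), every edge of $\Gamma$ sits in a unique hyperedge $K_h$ (property~\ref{item:Construction_det_uniqueH}), and $E(D)\cap E(H)=\{e\}$ by construction together with Lemma~\ref{lem:structureG}.

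For axiom~\ref{axiom:set_determiner_colored_edge}, suppose for contradiction that $\phi$ is a $\cT$-free coloring of $D$ with $\phi(e)$ a cycle-color. Since $K_h$ is minimal $q_2$-Ramsey for $K_t$, the graph $H-e$ admits a $K_t$-free $q_2$-coloring $\phi'$ using only clique-colors. Setting $\widetilde\phi:=\phi\cup\phi'$ on $G$, I would check that $\widetilde\phi$ is $\cT$-free: every cycle-colored edge lies in $D$, so any monochromatic cycle-colored $C_\ell$ would already contradict $\phi$; any monochromatic clique-colored $K_t$ is a hyperedge copy contained in a unique $K_h^\star$, which is either $H$ (placing the $K_t$ inside $H-e$, contradicting $\phi'$) or some $K_h^\star$ edge-disjoint from $H$ and thus inside $D$ (contradicting $\phi$). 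This contradicts $G\to_q \cT$.

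For axioms~\ref{axiom:set_determiner_not_Ramsey} and~\ref{axiom:set_determiner_any_color}, fix a clique-color $c$. Minimality of $G$ gives a $\cT$-free coloring $\phi_0$ of $G-e$; I define $\phi$ on $D$ by $\phi|_{D-e}=\phi_0|_{D-e}$ and $\phi(e)=c$. Since $\phi(e)$ is a clique-color, no cycle-colored $C_\ell$ can use $e$, so such a cycle would live in $G-e$, ruled out by $\phi_0$. Any monochromatic clique-colored $K_t$ is a hyperedge copy in some $K_h^\star$; if $K_h^\star=H$, then $E(K_t)\subseteq E(H)\cap E(D)=\{e\}$, impossible since $|E(K_t)|\geq 3$, and otherwise $K_h^\star\subseteq D-e\subseteq G-e$, again ruled out by $\phi_0$.

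The main obstacle is safety. Given $J$ with $V(J)\cap V(D)\subseteq V(e)=\{u,v\}$ and an extension $\psi$ of $\phi$ to $D\cup J$ with $\psi|_J$ being $\cT$-free, I need to show $\psi$ is $\cT$-free. The key reduction is that every edge of $D\cup J$ has both endpoints in $V(D)$ or both in $V(J)$, so a subgraph crossing between the two sides must transit only through vertices of $V(e)$. For a candidate monochromatic cycle-colored $C_\ell$ lying in neither $D$ nor $J$, this forces a decomposition into a $uv$-path $P_D$ in $D$ and a $uv$-path $P_J$ in $J$ with $|P_D|+|P_J|=\ell$. Any $uv$-path in $D$ of length $k$ with $2\leq k\leq\ell-2$ avoiding $e$ would close with $e$ into a cycle of length $k+1<\ell$, which by properties~\ref{item:Construction_det_NoCycle} and~\ref{item:Construction_det_uniqueH} must sit inside $H$, contradicting $E(D)\cap E(H)=\{e\}$. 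The remaining possibilities force $e$ into the $C_\ell$, breaking cycle-color monochromaticity since $\phi(e)$ is a clique-color. An analogous vertex-partition argument, combined with the uniqueness of the $K_h$ containing $e$, reduces any monochromatic clique-colored $K_t$ in $D\cup J$ to one sitting entirely in $D$ or entirely in $J$, ruled out by the $\cT$-freeness of $\phi$ and $\psi|_J$.
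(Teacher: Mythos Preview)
Your proof is correct and follows essentially the same approach as the paper: the argument for \ref{axiom:set_determiner_colored_edge} is identical, and the safety argument uses the same key structural fact (any cycle through $e$ in $D$ must be a non-hyperedge cycle and hence have length at least $\ell$), though you spell out the path decomposition more explicitly. The only minor difference is in \ref{axiom:set_determiner_not_Ramsey}/\ref{axiom:set_determiner_any_color}: you build $\phi$ from a $\cT$-free coloring of $G-e$ and set $\phi(e)=c$ by hand (then re-verify $\cT$-freeness via the hyperedge structure), whereas the paper simply takes any $\cT$-free coloring of the proper subgraph $D\subsetneq G$, observes that $\phi(e)\in\CliqueColors$ by the already-proved \ref{axiom:set_determiner_colored_edge}, and permutes the clique-colors.
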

\begin{proof}
We first show property~\ref{axiom:set_determiner_colored_edge}. For a contradiction, suppose $\psi$ is a $\cT$-free coloring of $D$ in which $\psi(e) \in \CycleColors{}$. Then, by an argument similar to the one used in~\thref{lem:structureG}, putting together this $\cT$-free coloring of $D$ and a $K_t$-free $q_2$-coloring of $H-e$ (with colors in \CliqueColors{}), we obtain a $\cT$-free coloring of $G$, which is a contradiction to the fact that $G\rightarrow_q \cT$.

To see properties~\ref{axiom:set_determiner_not_Ramsey} and~\ref{axiom:set_determiner_any_color}, note that $D$ is a proper subgraph of $G$, so  $D$ has a $\cT$-free $q$-coloring $\phi$. Further, by permuting the clique-colors in $\phi$ appropriately, we can obtain a $\cT$-free coloring of $D$ in which the edge $e$ has any color in $\CliqueColors{}$. 

It remains to show that $\phi$ is safe at $\set*{e}$. Let $F$ be any graph such that $V(D)\cap V(F)\subseteq e$. Let $\phi'$ be a $\cT$-free $q$-coloring of $F$ that agrees with $\phi$ on the edge $e$. We claim that the coloring $\widetilde{\phi}$, given by $\widetilde{\phi} = \phi\cup\phi'$,
is a $\cT$-free $q$-coloring of $D\cup F$. We know that the restrictions of $\widetilde{\phi}$ to both $D$ and $F$ are $\cT$-free; it remains to show that there are no monochromatic cliques or cycles in the appropriate colors intersecting both $V(D)-e$ and $V(F)-e$. 

First, it is not difficult to see that there can be no such copy of $K_t$. For $t=3$, this is clear. If $t\geq 4$ and there is a $t$-clique $K$ intersecting both $D-e$ and $F-e$, then we can disconnect $K$ by removing the vertices of $e$, which is impossible. Suppose there is such a copy $C$ of $C_\ell$. Note first that $C$ must contain both vertices of $e$ because $C_\ell$ is $2$-connected. Now, let $v$ be a vertex of $C$ contained in $V(D)-e$, and let $w$ be a vertex of $C$ contained in $V(F)-e$. Now, there are no non-hyperedge cycles of length less than $\ell$ in $D$, so every cycle containing $e$ in $D$ has length at least $\ell$. Hence, the vertices $v$ and $w$ cannot be contained in a cycle of length $\ell$ with both endpoints of $e$, and therefore $C$ cannot exist. 
Thus the coloring $\widetilde{\phi}$ is $\cT$-free, implying that $\phi$ is safe. This completes the verification of the safeness property.
\end{proof}

Now we construct a safe \CycleColors-determiner $D'$ by taking a copy $H$ of $K_h$, fixing one edge $f$, and attaching copies of the \CliqueColors-determiner $D$ constructed above to all remaining edges of $H$. This again generalizes a construction of Siggers~\cite{siggers_non-bipartite_2014}.

\begin{lemma}\thlabel{lem:cycle-determiner}
The graph $D'$ is a safe \CycleColors{}-determiner for the tuple $\cT$ with signal edge $f$.
\end{lemma}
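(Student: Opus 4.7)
The plan is to follow the same template used for \thref{lem:clique-determiner}: verify the three determiner axioms and then safeness at $f$, exploiting the fact that the attached \CliqueColors-determiners collectively force every non-signal edge of $H$ into a clique-color in any $\cT$-free coloring.

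First, I will dispense with \ref{axiom:set_determiner_colored_edge}. In any $\cT$-free coloring $\phi$ of $D'$, the axiom \ref{axiom:set_determiner_colored_edge} applied to each attached \CliqueColors-determiner forces its signal edge $e_i$ into \CliqueColors, so every edge of $H-f$ receives a clique-color. If $\phi(f)$ were also a clique-color, then the complete graph $H = K_h = K_{r_{q_2}(K_t)}$ would be $q_2$-colored entirely in clique-colors and the definition of $r_{q_2}(K_t)$ would yield a monochromatic $K_t$ in some clique-color, contradicting $\cT$-freeness. Hence $\phi(f) \in \CycleColors$.

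Next, for every cycle-color $c$ I will build a safe $\cT$-free coloring of $D'$ with $\phi(f)=c$, thereby establishing \ref{axiom:set_determiner_not_Ramsey} and \ref{axiom:set_determiner_any_color}. Assign $f$ the color $c$; use the minimality of $K_h$ as a $q_2$-Ramsey graph for $K_t$, which gives $K_h-f \not\to_{q_2} K_t$, to color $H-f$ with a $K_t$-free $q_2$-coloring in clique-colors; and extend to each $D_{e_i}$ by the safe $\cT$-free coloring of the \CliqueColors-determiner whose signal edge receives the color already placed on $e_i$ (afforded by \ref{axiom:set_determiner_any_color} together with safeness of the \CliqueColors-determiner from \thref{lem:clique-determiner}). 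Applying the safeness of each $D_{e_i}$ in sequence, using that $V(D_{e_i})$ meets everything built so far only within $V(e_i)$, propagates $\cT$-freeness from $H$ outward to all of $D'$.

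Finally I will verify safeness of this coloring at $f$. Given any graph $X$ with $V(X)\cap V(D')\subseteq V(f)$ and any extension $\psi$ of $\phi$ to $D'\cup X$, the same iterative use of the safeness of the $D_{e_i}$'s reduces $\cT$-freeness of $\psi$ to that of $\phi|_H \cup \psi|_X$ on $H \cup X$. A monochromatic $K_t$ in a clique-color cannot span $V(H)\setminus V(X)$ and $V(X)\setminus V(H)$ since $H\cup X$ has no edges across that cut; any such copy inside $H$ alone is excluded because $H-f$ is $K_t$-free in clique-colors and $f$ is cycle-colored, and any copy inside $X$ alone is excluded by $\cT$-freeness of $\psi|_X$. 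The main obstacle will be a spanning monochromatic $C_\ell$ in some cycle-color $c'$: by $2$-connectedness it must contain both vertices of $f$, so it decomposes as an $H$-arc plus an $X$-arc sharing those endpoints. Since $f$ is the only cycle-colored edge of $H$, the $H$-arc must be $f$ itself, giving $c'=c$, and the $X$-arc becomes a monochromatic path of length $\ell-1$ from $u$ to $v$ in color $c$ in $X$. Together with the signal edge, this path closes into a monochromatic $C_\ell$ that contradicts the $\cT$-freeness of $\psi|_X$ (interpreting $f$ via the shared-edge convention), in exact analogy with the spanning-cycle argument of \thref{rem:safeness}. This last step is the delicate one, and the key structural input it relies on is that within $H$ only $f$ bears a cycle-color.
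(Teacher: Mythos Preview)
Your proof follows the same approach as the paper and is essentially correct. One small hiccup in the safeness argument for cycles: once a ``spanning'' monochromatic $C_\ell$ contains both endpoints of $f$ and decomposes into an $H$-arc and an $X$-arc, the $H$-arc by definition carries an internal vertex in $V(H)\setminus V(X)$ and hence has length at least two --- so it \emph{cannot} be $f$ itself, and you already have your contradiction right there (the $H$-arc would need at least two cycle-colored edges in $H$, but $H$ has only one). Your subsequent step, producing a length-$(\ell-1)$ path in $X$ and closing it with $f$, is therefore unnecessary and in any case only works when $f\in E(X)$, as you yourself flag. The paper's version is terser still: since $f$ is the only cycle-colored edge of $H$, any cycle-colored $C_\ell$ in $H\cup X$ lies in $X\cup\{f\}$, which under the shared-edge convention is just $X$, contradicting $\cT$-freeness of $\psi|_X$.
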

\begin{proof}
We again begin with property~\ref{axiom:set_determiner_colored_edge}. 
Take an arbitrary $\cT$-free coloring of $D'$. This coloring induces a $\cT$-free coloring on each copy of $D$, so, by property~\ref{axiom:set_determiner_colored_edge} of $D$, all edges of $H-f$ have colors in $\CliqueColors{}$. If $f$ has one of these colors too, then $H$ is fully colored with colors in $\CliqueColors{}$. Since $H$ is $q_2$-Ramsey for $K_t$, there exists a monochromatic copy of $K_t$ in $H$, contradicting the fact that the coloring $\phi$ is $\cT$-free. So the color of $f$ must be in the set $\CycleColors{}$.

We show properties~\ref{axiom:set_determiner_not_Ramsey} and~\ref{axiom:set_determiner_any_color} next. By minimality, we know that $H-f$ is not $q_2$-Ramsey for $K_t$, and hence it has a $K_t$-free coloring $\psi$ from the palette~\CliqueColors. Let $\phi$ be a $q$-coloring extending $\psi$ in which each copy of the determiner $D$ has a safe $\cT$-free coloring and the edge $f$ has an arbitrary color from $\CycleColors{}$; this coloring $\widetilde{\phi}$ exists by property~\ref{axiom:set_determiner_any_color} of $D$. Since the coloring of each copy of $D$ is safe and since $H$ has a $\cT$-free $q$-coloring, the coloring $\phi$ of $D'$ is also $\cT$-free.

Finally, to see the safeness of $\phi$, let $F$ be a graph such that $V(D')\cap V(F)\subseteq f$. If $F$ is given a $\cT$-free $q$-coloring $\phi'$ that agrees with $\phi$ on $f$, then the coloring $\widetilde{\phi} = \phi\cup \phi'$ is a $\cT$-free $q$-coloring of $D' \cup F$. Indeed, since each copy of $D$ is safe and the only edge of $H$ that has color in $\CycleColors{}$ is $f$, we know that there can be no monochromatic copy of $C_\ell$ in $D'\cup F$ using a cycle-color in $\widetilde{\phi}$. Similarly, since we cannot disconnect $K_t$ by removing at most two vertices, we know that there can be no copy of $K_t$ intersecting both $V(D')-f$ and $V(F)-f$, and hence there can be no monochromatic copy of $K_t$ in a clique-color in $\widetilde{\phi}$. Hence, $\widetilde{\phi}$ is a $\cT$-free $q$-coloring and thus $\phi$ is a safe coloring of $D'$.
\end{proof}

\subsection{Construction of set-senders}

So far we have constructed a \CliqueColors{}-determiner $D$ and a \CycleColors-determiner $D'$, generalizing ideas from~\cite{bollobas2001ramsey} and~\cite{siggers_non-bipartite_2014}. We now take the constructions a step further and use our set-determiners to build set-senders for these sets of colors when $q_1>1$ or $q_2>1$, proving~\thref{lem:cycle_and_clique_senders}. 

If $q_1>1$, let $S$ be a safe negative (respectively positive) signal sender for $C_\ell$ with $q_1$ colors, as guaranteed by~\thref{lem:signal_senders_cycles} and Remark~\ref{rem:safeness}; let $e$ and $f$ denote its signal edges. Let $R$ be a graph obtained from $S$ by attaching a copy of $D'$ to every edge of $S$.

\begin{lemma}\thlabel{lemma:Existence_cycle_senders_for_tau}
If $S$ is a negative (respectively positive) signal sender for $C_\ell$ with signal edges $e$ and $f$ as above, then $R$ is a safe negative (respectively positive) \CycleColors-sender for $\cT$ with signal edges $e$ and $f$. 
\end{lemma}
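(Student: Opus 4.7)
The plan is to verify the three defining properties of a safe negative (respectively positive) \CycleColors{}-sender for $\cT$ by combining two ingredients: the safeness of $S$ as a signal sender for $C_\ell$ over palette \CycleColors{} (which supplies both the coloring constraint on the signal edges and the girth-$\ell$, signal-edge-distance-$(\ell+1)$ properties from \thref{lem:signal_senders_cycles} and \thref{rem:safeness}), and the safeness of each attached copy $\widehat{D'}_g$ as a \CycleColors{}-determiner for $\cT$.

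For property \ref{axiom:set_sender_colored_edges}, I would first apply property \ref{axiom:set_determiner_colored_edge} of each $\widehat{D'}_g$: in any $\cT$-free $q$-coloring $\psi$ of $R$, every edge $g\in E(S)$ must receive a color in \CycleColors{}. Consequently $\psi|_S$ is effectively a $q_1$-coloring of $S$, and since $\psi$ has no monochromatic cycle in any cycle-color, this restriction is $C_\ell$-free. Property \ref{axiom:set_sender_colored_edges} of $S$ then forces the colors of $e,f$ to be cycle-colors in the required relation (distinct for negative, equal for positive).

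For properties \ref{axiom:set_sender_not_Ramsey} and \ref{axiom:set_sender_any_color}, I would explicitly construct a $\cT$-free $q$-coloring $\phi$ of $R$ realizing any prescribed $c_1,c_2\in\CycleColors{}$ on $(e,f)$. Start with the safe $C_\ell$-free $q_1$-coloring $\phi_S$ of $S$ with $\phi_S(e)=c_1$ and $\phi_S(f)=c_2$ guaranteed by property \ref{axiom:set_sender_any_color} and the safeness of $S$, viewed as a $q$-coloring using only cycle-colors. For each $g\in E(S)$, use property \ref{axiom:set_determiner_any_color} and the safeness of $D'$ to equip $\widehat{D'}_g$ with a safe $\cT$-free coloring $\phi_g$ satisfying $\phi_g(g)=\phi_S(g)$, and set $\phi=\phi_S\cup\bigcup_{g}\phi_g$. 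To see that $\phi$ is $\cT$-free, I would strip the copies $\widehat{D'}_{g_i}$ off $R$ one at a time: at each step $\widehat{D'}_{g_i}$ meets the remainder only along $V(g_i)$ (the copies are otherwise pairwise vertex-disjoint, and each one touches $S$ only in $V(g_i)$), so the safeness of $\phi_{g_i}$ at $\{g_i\}$ reduces the $\cT$-freeness question to the smaller graph. Iterating brings us back to $\phi_S$ alone on $S$, which is $\cT$-free by construction. Varying $(c_1,c_2)$ then delivers \ref{axiom:set_sender_any_color}.

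The same stripping argument, run on $R\cup F$ for any graph $F$ with $V(F)\cap V(R)\subseteq V(e)\cup V(f)$ and any compatible $\cT$-free coloring $\phi'$ of $F$, establishes safeness of $\phi$ at $\{e,f\}$ by reducing the $\cT$-freeness of $\phi\cup\phi'$ to that of $\phi_S\cup\phi'$ on $S\cup F$. There, any hypothetical monochromatic $K_t$ in a clique-color avoids all edges of $S$ (which carry only cycle-colors), and since $K_t$ cannot be disconnected by removing at most two vertices for $t\geq 3$, it must lie entirely in $F$, contradicting the $\cT$-freeness of $\phi'$. A hypothetical monochromatic $C_\ell$ in a cycle-color is neither inside $S$ alone (by $C_\ell$-freeness of $\phi_S$) nor inside $F$ alone (by $\cT$-freeness of $\phi'$); a copy crossing the boundary is exactly what is ruled out by the girth and signal-edge-distance properties of $S$, via the argument sketched in \thref{rem:safeness}. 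The main delicate point I expect is the bookkeeping for the iterated safeness across many attached copies; it goes through precisely because each $\widehat{D'}_g$ meets the rest of the graph only along $V(g)$, which matches the hypothesis of \thref{def:safe_coloring_2colors} at every stripping step.
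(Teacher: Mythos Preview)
Your proof is correct and follows essentially the same approach as the paper's: verify \ref{axiom:set_sender_colored_edges} via the determiners forcing $S$ into a $C_\ell$-free $q_1$-coloring, construct the coloring for \ref{axiom:set_sender_not_Ramsey} and \ref{axiom:set_sender_any_color} by combining a safe coloring of $S$ with safe colorings of the attached copies of $D'$, and deduce safeness from the safeness of the pieces. In fact you are more thorough than the paper, which omits the verification of \ref{axiom:set_sender_colored_edges} entirely; one small remark is that your disconnection clause for $K_t$ is unnecessary, since once the clique avoids all (cycle-colored) edges of $S$, every one of its edges already lies in $F$ and hence so do all its vertices.
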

\begin{proof}

Assume $S$ is a negative signal sender for $C_\ell$ in $q_1$ colors; the other case is similar. 
We first show properties~\ref{axiom:set_sender_not_Ramsey} and~\ref{axiom:set_sender_any_color}. Let $c_1, c_2\in \CycleColors$ be distinct. We know that $S\nrightarrow_{q_1} C_\ell$, so $S$ has a safe $C_\ell$-free coloring from the set $\CycleColors{}$, and by property~\ref{axiom:set_sender_any_color} of $S$, we can ensure that $e$ and $f$ receive colors $c_1$ and $c_2$, respectively. Now, since the signal edge of each copy of $D'$ has color in $\CycleColors{}$, by property~\ref{axiom:set_determiner_any_color} of $D'$, this coloring of $S$ can be extended to each copy of $D'$ so that each copy of $D'$ has a safe $\cT$-free $q$-coloring. The coloring of each copy of $D'$ is safe, so the $q$-coloring defined on $R$ is $\cT$-free. To see the safeness of this coloring, notice that the coloring of each copy $D'$ is safe at its signal edge and the coloring of $S$, containing only colors from $\CycleColors$, is safe at $\set{e,f}$. \end{proof}

Finally, if $q_2>1$, we build \CliqueColors-senders for $\cT$. Let $S'$ be a safe negative (respectively positive) signal sender for $K_t$ with $q_2$ colors taken as \CliqueColors, as guaranteed by~\thref{lem:signal_senders_cliques} and Remark~\ref{rem:safeness}; let $e$ and $f$ denote its signal edges. Let $R'$ be a graph obtained from $S'$ by attaching a copy of $D$ to every edge of $S'$. We omit the proof that $R'$ is a set-sender for $K_t$, as it is essentially the same as that of~\thref{lemma:Existence_cycle_senders_for_tau}.

\begin{lemma}\thlabel{lemma:Existence_clique_senders_for_tau}
If $S'$ is a negative (respectively positive) signal sender for $K_t$ with signal edges $e$ and $f$, then $R'$ is a safe negative (respectively positive) \CliqueColors-sender $R'$ for $\cT$ with signal edges $e$ and $f$.
\end{lemma}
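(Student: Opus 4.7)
The plan is to mirror the argument of~\thref{lemma:Existence_cycle_senders_for_tau} almost verbatim, with the roles of the cycle-colors and determiner $D'$ swapped for the clique-colors and determiner $D$. Assume first that $S'$ is a negative signal sender for $K_t$ in the $q_2$ colors \CliqueColors{}, with signal edges $e$ and $f$, and that $R'$ is obtained by attaching a copy of the safe \CliqueColors-determiner $D$ (from \thref{lem:clique-determiner}) to every edge of $S'$. The positive case is identical, with "distinct" replaced by "equal" throughout.

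To establish properties~\ref{axiom:set_sender_not_Ramsey} and~\ref{axiom:set_sender_any_color}, I would fix any pair $c_1,c_2\in\CliqueColors$ with $c_1\neq c_2$. Since $S'\nto_{q_2} K_t$ and $S'$ is a safe signal sender for $K_t$, property~\ref{axiom:set_sender_any_color} of $S'$ yields a safe $K_t$-free $q_2$-coloring $\psi$ of $S'$ with $\psi(e)=c_1$ and $\psi(f)=c_2$. Because every edge of $S'$ gets a color in \CliqueColors{} under $\psi$, property~\ref{axiom:set_determiner_any_color} of $D$ lets me extend $\psi$ to every attached copy of $D$ so that each copy receives a safe $\cT$-free $q$-coloring. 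Safeness of each $D$ at its signal edge then implies that the combined coloring of $R'$ is itself $\cT$-free, giving $R'\nto_q\cT$ together with a $\cT$-free $q$-coloring realising the prescribed colors on $e$ and $f$.

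For property~\ref{axiom:set_sender_colored_edges}, take any $\cT$-free $q$-coloring $\chi$ of $R'$. Each attached copy of $D$ inherits a $\cT$-free coloring, so property~\ref{axiom:set_determiner_colored_edge} of $D$ forces every edge of $S'$ to carry a color from \CliqueColors{}. Thus $\chi_{\mid S'}$ is a $q_2$-coloring of $S'$ using only the palette \CliqueColors{}, and it is $K_t$-free because any monochromatic $K_t$ in $S'$ in a clique-color would contradict $\chi$ being $\cT$-free. Property~\ref{axiom:set_sender_colored_edges} of $S'$ as a negative signal sender for $K_t$ in the palette \CliqueColors{} then yields distinct $c_1,c_2\in\CliqueColors$ with $\chi(e)=c_1$ and $\chi(f)=c_2$, which is exactly what~\ref{axiom:set_sender_colored_edges} demands of $R'$.

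Finally, safeness of the $\cT$-free coloring constructed in the first paragraph follows by the same argument as in~\thref{lemma:Existence_cycle_senders_for_tau}: each attached $D$ is safe at its signal edge, so no monochromatic $K_t$ in a clique-color and no monochromatic $C_\ell$ in a cycle-color can straddle the interface between $R'$ and an external graph $F$ with $V(R')\cap V(F)\subseteq e\cup f$, provided the base coloring of $S'$ is safe at $\set*{e,f}$. The step that needs the most care -- and that I expect to be the only genuinely non-routine point -- is verifying that $\chi_{\mid S'}$ is $K_t$-free inside $S'$; this uses the safeness of the attached determiners to rule out "external" monochromatic $K_t$'s passing through $S'$. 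Once this observation is in place, the remainder of the argument is a direct transcription of the proof of~\thref{lemma:Existence_cycle_senders_for_tau}.
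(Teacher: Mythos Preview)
Your proposal is correct and follows exactly the approach the paper intends: the paper in fact omits this proof entirely, saying only that it is ``essentially the same as that of~\thref{lemma:Existence_cycle_senders_for_tau}'', and your write-up is precisely that transcription with cycle-colors and $D'$ swapped for clique-colors and $D$. One small remark: the step you flag as the ``only genuinely non-routine point''---that $\chi_{\mid S'}$ is $K_t$-free---does not actually require the safeness of the attached determiners. Since $S'\subseteq R'$ and $\chi$ is $\cT$-free on all of $R'$, any monochromatic $K_t$ in a clique-color inside $S'$ would already contradict $\cT$-freeness of $\chi$; no straddling analysis is needed here.
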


\section{Concluding remarks}

In this paper, we initiated the study of the parameter $s_q$ in the asymmetric setting for tuples consisting of cliques and cycles. The upper and lower bounds we obtain are strongly dependent on the existing bounds for the symmetric parameter $s_q(K_t)$. As noted by the authors in \cite{fox2016minimum}, the study of $s_q(K_t)$ appears to be tightly connected to the \Erdos{}-\Rogers{} function, implying that any improvements on our current results would probably be non-trivial. We refer to \cite[Section 5]{fox2016minimum} for a more detailed discussion on the relationship between $s_q(K_t)$ and the \Erdos{}-\Rogers{} function.\smallskip

It would be desirable to study other asymmetric cases of the problem, and a natural place to start is to consider pairs of graphs for which safe determiners are known to exist (including all pairs of $3$-connected graphs and the pairs considered by Siggers in~\cite{siggers_non-bipartite_2014}). \smallskip

The multicolor asymmetric setting offers even more room for study, as the existence of gadget graphs is an open problem even in some very natural cases. Our method allows us to construct set-determiners and set-senders for tuples of the form $(C_\ell,\dots, C_\ell, K_s, K_t)$. However we are not aware of a way to build gadget graphs for asymmetric $q$-tuples of cliques, with $q>2$. Since studying Ramsey graphs for cliques is a central theme in Ramsey theory, we believe that resolving the following problem would be of interest. 

\begin{problem}
Construct signal senders for asymmetric $q$-tuples $(K_{t_1},\dots, K_{t_q})$.
\end{problem}
The natural first instances to attack, which might also shed some light on the general case, are tuples of the form $(K_t, \ldots,K_t, K_k)$ or $(K_t, K_s, K_k)$. Once we have the necessary tools, it would be very interesting to investigate the parameter $s_q$ for such tuples.

It would also be desirable to determine if the upper bound in~\thref{thm:sq_relation} holds in other cases. In particular, it was conjectured by~\FGLPS~\cite{fox2016minimum} that $s_q(K_{t-1}) \leq s_q(K_t)$ for $q>3$. Perhaps the following asymmetric version would be more approachable.

\begin{problem}
Show that $s_q(\underbrace{K_{t-1},\ldots,K_{t-1}}_{q_1+1\text{ times}},\underbrace{K_t,\ldots,K_t}_{q_2-1\text{ times}}) \leq s_q(\underbrace{K_{t-1},\ldots,K_{t-1}}_{q_1\text{ times}},\underbrace{K_t,\ldots,K_t}_{q_2\text{ times}})$.
\end{problem}

\section*{Acknowledgements} The second author was supported by the Deutsche Forschungsgemeinschaft (DFG) Graduiertenkolleg ``Facets of Complexity'' (GRK 2434). The fifth author was supported by the Commonwealth through an Australian Government Research Training Program Scholarship. The sixth author was partially supported by the Australian Research Council.


\bibliographystyle{amsplain}
\bibliography{Bibliography.bib}

\appendix
\section{Proof of Lemma~\ref{lem:propertiesGamma}~\ref{lem:propertiesGamma:contains_cycle}}\label{app:proof_lemma}

We now give the proof of~\thref{lem:propertiesGamma}~\ref{lem:propertiesGamma:contains_cycle}. The proof is similar to the proof of Proposition 9 in~\cite{bollobas2001ramsey}, but we use modern results related to the K\L{}R conjecture.

\begin{proof}[Proof of~\thref{lem:propertiesGamma}~\ref{lem:propertiesGamma:contains_cycle}.]

    Let $m$ satisfy $\frac{n}{K_0}\leq m\leq \frac{n}{k_0}$; we can write $p_e = Bm^{-1+1/(\ell-1)}$, where $B = A\parens*{\frac{n}{m}}^{-1+1/(\ell-1)}$. Notice that $B$ satisfies $AK_0^{-1+1/(\ell-1)} \leq B \leq Ak_0^{-1+1/(\ell-1)}$.

    Let $(V_1,\dots, V_\ell)$ and $\Gamma'$ be as given.  Suppose that the pairs $(V_i,V_{i+1})$ for $i\in [\ell]$ are $(\eps, p_e)$-regular with $p_e$-density at least $\rho$ in $\Gamma'$. Then we have $e_{\Gamma'}(V_i,V_{i+1}) \geq \rho p_e m^2$ for all $i\in [\ell]$. Let $M$ be an integer satisfying
    \begin{align*}
        \rho p_e m^2 \leq M \leq \min\limits_{i\in [\ell]}e_{\Gamma'}(V_i,V_{i+1}).
    \end{align*}
    Notice that this integer $M$ satisfies
    \begin{align*}
        M &\geq \rho p_e m^2  = \rho B m^{1+1/(\ell-1)}  
        \geq \rho A K_0^{-1+1/(\ell-1)} m^{1+1/(\ell-1)}\\ &\geq Cm^{1+1/(\ell-1)}\geq 2 \beta m = \beta \size*{V_i\cup V_{i+1}},
    \end{align*}
    since $A \geq K_0^{1-1/(\ell-1)}C/\rho$ and $n$, and hence $m$, is taken to be sufficiently large. 
    
    Consider the pair $(V_1,V_2)$ and let $d = \frac{e_{\Gamma'}(V_1,V_2)}{m^2}$; then we have $d \geq \rho p_e$, and thus $p_e\leq \frac{d}{\rho}$. By definition, it then follows that the pair $(V_1,V_2)$ is $\parens*{\frac{\eps}{\rho}, d}$-regular, or simply $\parens*{\frac{\eps}{\rho}}$-regular. 
    By~\thref{lem:feweredges}, there is a subset $E_{1,2}\subseteq E_{\Gamma'}(V_1,V_2)$ such that $|E_{1,2}| = M$ and the pair $(V_1,V_2)$ is $\parens*{\frac{2\eps}{\rho}}$-regular in $(V_1\cup V_2, E_{1,2})$. Repeating this argument for all pairs of the form $(V_i,V_{i+1})$, we find that $\Gamma'[V_1\cup\dots\cup V_\ell]$ contains at least one graph in $\mathcal{G}\parens*{\ell,m, (V_i)_{i=1}^\ell,M,\frac{2\eps}{\rho}}$.
    
    Our goal now is to show that, with high probability, there is no collection of subsets $(V_i)_{i=1}^\ell$ and subgraph $\Gamma'\subseteq \Gamma$ as given in the statement such that $\Gamma'[V_1\cup\dots\cup V_\ell]$ contains a subgraph belonging to $\mathcal{F}\parens*{\ell,m, (V_i)_{i=1}^\ell,M,\frac{2\eps}{\rho}}$. Again, let the $\ell$-tuple $(V_1,\dots, V_\ell)$ be fixed.
    If $F\in \mathcal{F}\parens*{\ell,m, (V_i)_{i=1}^\ell,M,\frac{2\eps}{\rho}}$ has edges $e_1,\dots, e_{M\ell}$ and there exists a transversal $\Gamma'$ such that $F\subseteq \Gamma'[V_1\cup\dots\cup V_\ell]$, there must exist distinct hyperedges $\mathcal{E}_1,\dots, \mathcal{E}_{M\ell}\in E(\mathcal{H}_{n,p_h})$ such that $e_i\subseteq \mathcal{E}_i \text{ for all }i\in[M\ell]$. Therefore
    \begin{align}
        \Pr\brackets*{\exists \text{transversal } \Gamma' :\ F\subseteq \Gamma'[V_1\cup\dots\cup V_\ell]}
        &\leq \parens*{\binom{n-2}{h-2}p_h}^{M\ell} \nonumber \\
        &\leq \parens*{(n-2)^{h-2}An^{-(h-1)+1/(\ell-1)}}^{M\ell} \nonumber \\
        &\leq \parens*{An^{-1+1/(\ell-1)}}^{M\ell} = p_e^{M\ell}. \label{eq:prob_exist_gammae_with_subgraph}
    \end{align}
    Note that, when $n$ is sufficiently large, we have $m\geq m_0$. By choice of $\eps\leq\rho\eps_0/2$, applying~\thref{lem:counting} and the union bound, we obtain
    \begin{align*}
        &\Pr\brackets*{\exists\ \text{transversal }\Gamma', F\in\mathcal{F}\parens*{\ell,m, (V_i)_{i=1}^\ell,M,\frac{2\eps}{\rho}}: F\subseteq \Gamma'[V_1\cup\dots\cup V_\ell]} \\
        &\leq \alpha^M\binom{m^2}{M}^\ell p_e^{\ell M} \leq \alpha^M \parens*{\frac{m^2e}{M}}^{\ell M}p_e^{\ell M}
        \leq \alpha^M \parens*{\frac{e}{\rho}}^{\ell M} = e^{-M},
    \end{align*}
    where the last inequality follows from the fact that $M\geq \rho p_e m^2$ and the final step follows by the choice of $\alpha$.

    This implies that, for any fixed integers $m$ and $M$ and any collection of disjoint subsets $V_1,\dots, V_\ell$ of $[n]$, each of size $m$, the probability that there exists a transversal $\Gamma'$ such that $\Gamma'[V_1\cup\dots\cup V_\ell]$ contains some graph in $\mathcal{F}\parens*{\ell,m, (V_i)_{i=1}^\ell,M,\frac{2\eps}{\rho}}$ is at most $e^{-M}$. 
    
    Now, for any choice of $\frac{n}{K_0} \leq m\leq \frac{n}{k_0}$ and $Cm^{1+1/(\ell-1)}\leq M\leq m^2\leq n^2$, there are at most $n^{m\ell}$ choices for the sets $V_1,\dots, V_\ell$. Summing over the possible choices for the sets $V_1,\dots, V_\ell$ and the possible choices for $m$ and $M$, we find that the probability that~\ref{lem:propertiesGamma:contains_cycle} fails is bounded from above by the probability that there exist $m$, $M$, $(V_i)_{i=1}^\ell$ and $\Gamma'$ such that $\Gamma'[V_1\cup\dots\cup V_\ell]$ contains a member of $\mathcal{F}\parens*{\ell,m, (V_i)_{i=1}^\ell,M,\frac{2\eps}{\rho}}$, which is at most
    \begin{align*}
        \sum\limits_{m} \sum\limits_{M} n^{m \ell}e^{-M} 
        &\leq  \sum\limits_{m} \sum\limits_{M} \exp(- Cm^{1+1/(\ell-1)} + m\ell \log n) \\
        &\leq \sum\limits_{m} \sum\limits_{M} \exp\parens*{-C\parens*{\frac{n}{K_0}}^{1+1/(\ell-1)} + \frac{n}{k_0}\ell \log n}\\ 
        &\leq n^3 \exp\parens*{-C\parens*{\frac{n}{K_0}}^{1+1/(\ell-1)} + \frac{n}{k_0}\ell \log n} = o(1).
    \end{align*}
    
\end{proof}

\end{document}